\newtheorem{theorem}{Theorem}
\newtheorem{question}{Question}
\newtheorem{lemma}{Lemma}
\newtheorem{proposition}{Proposition}
\newtheorem{corollary}{Corollary}
\newtheorem*{theoremWITHOUT}{Theorem}
\theoremstyle{definition}
\newtheorem{definition}{Definition}
\newtheorem{remark}{Remark}
\newtheorem*{exmp}{Example}
\numberwithin{equation}{section}
\newcommand{\bbW}{\textbf{W}}
\newcommand{\bbw}{\textbf{w}}
\newcommand{\bbZ}{\textbf{Z}}
\newcommand{\bbY}{\textbf{Y}}
\newcommand{\by}{\textbf{y}}
\newcommand{\bbS}{\textbf{S}}
\newcommand{\bbU}{\textbf{U}}
\newcommand{\cA}{\mathcal{A}}
\newcommand{\cL}{\mathcal{L}}
\newcommand{\cC}{\mathcal{C}}
\newcommand{\cT}{\mathcal{T}}
\newcommand{\cR}{\mathcal{R}}
\newcommand{\cK}{\mathcal{K}}
\newcommand{\dha}{d_{\mathbin{haus}}}
\newcommand{\cF}{\mathcal{F}}
\newcommand{\cI}{\mathcal{I}}
\newcommand{\cG}{\mathcal{G}}
\newcommand{\cM}{\mathcal{M}}
\newcommand{\cME}{\mathcal{M}^{\text{exch}}}
\newcommand{\bP}{\mathbb{P}}
\newcommand{\bE}{\mathbb{E}}
\newcommand{\bp}{\textbf{p}}
\newcommand{\bx}{\textbf{x}}
\newcommand{\bn}{\textbf{n}}
\newcommand{\dW}{d^*}
\newcommand{\bw}{\textbf{w}}
\newcommand{\bbX}{\textbf{X}}
\newcommand{\bbG}{\textbf{G}}
\newcommand{\bv}{\textbf{v}}
\newcommand{\bbJ}{\textbf{J}}
\newcommand{\bbV}{\textbf{V}}
\newcommand{\bmu}{\bm{\mu}}
\newcommand{\bbF}{\bm{\cF}}
\newcommand{\blfi}{\blacksquare}
\newcommand{\whfi}{\square}
\newcommand{\blgr}{\color{black!10!white}\blacksquare\color{black}}
\newcommand{\whgr}{\color{black!10!white}\square\color{black}}
\newcommand{\bbeta}{\bm{\eta}}
\newcommand{\bR}{\mathbb{R}}
\newcommand{\bS}{\mathbb{S}}
\newcommand{\bL}{\mathbb{L}}
\newcommand{\bN}{\mathbb{N}}
\newcommand{\as}{\overset{a.s.}{=}}
\newcommand{\subas}{\overset{a.s.}{\subseteq}}
\pgfplotsset{compat=1.6}
\DeclareMathOperator*{\largecap}{\mathbin{\scalebox{1.5}{\ensuremath{\bigcap}}}}
\DeclareMathOperator{\unif}{\mathbin{unif}}
\DeclareMathOperator{\RSS}{\mathtt{RSS}}
\DeclareMathOperator{\RSSB}{\partial_{\RSS}}
\DeclareMathOperator{\sample}{\mathtt{PosSam}}
\DeclareMathOperator{\spread}{\mathtt{IOS}}
\DeclareMathOperator{\erase}{\mathtt{erase}}
\DeclareMathOperator{\ios}{\mathtt{ios}}
\DeclareMathOperator{\os}{\mathtt{os}}
\DeclareMathOperator{\set}{\mathtt{set}}
\DeclareMathOperator{\ps}{\mathtt{ps}}
\DeclareMathOperator{\erg}{\mathbin{erg}}
\DeclareMathOperator{\id}{\mathbin{id}}
\DeclareMathOperator{\law}{\cL}
\DeclareMathOperator{\GL}{\mathtt{GEWP}}
\DeclareMathOperator{\density}{\mathtt{ssd}}
\author[J. Gerstenberg]{Julian Gerstenberg}
\address{Julian Gerstenberg: Institut f\"ur Mathematische Stochastik, Leibniz Universit\"at Hannover, Welfengarten 1, 30167 Hannover, Germany}
\email{jgerst@stochastik.uni-hannover.de}
\keywords{(backward) filtration, erased-word process, exchangeability, ergodic law, simplex, poly-adic filtration, product-type filtration, Martin boundary, Wasserstein distance, coupling method, induced order statistics, exchangeable linear order}
\subjclass[2010]{52A07, 60G09, 60G99, 60J50}
\begin{document}
	
	\title[General Erased-Word Processes]{General Erased-Word Processes:\\ Product-Type Filtrations, Ergodic Laws and Martin Boundaries}
	
	
	\begin{abstract}
		We study the dynamics of erasing randomly chosen letters from words by
		introducing a certain class of discrete-time stochastic processes, general erased-word
		processes (GEWPs), and investigating three closely related topics: Representation, Martin boundary and filtration theory. We use de Finetti's theorem
		and the random exchangeable linear order to obtain a de Finetti-type
		representation of GEWPs involving induced order statistics. Our studies expose 
		connections between exchangeability theory and certain poly-adic filtrations that
		can be found in other exchangeable random objects as well. We show that ergodic GEWPs generate backward filtrations of product-type and by that generalize a result by S.Laurent \cite{laurent}.
	\end{abstract}
	\maketitle
	\vspace{-0.5cm}
		
	\section{Introduction}
	Let $A$ be a measurable space, \emph{the alphabet}, and $\bw=(w_1,\dots,w_n)\in A^n$ a word of length $n$ over $A$. For $i\in[n]:=\{1,\dots,n\}$ we define $\erase(\bw,i)\in A^{n-1}$ to be the word obtained by erasing the $i$-th letter of $\bw$, that is $$\erase(\bw,i)=(w_1,\dots,w_{i-1},w_{i+1},\dots,w_n).$$ We use bold letters to indicate that a (random) variable represents a vector (word) or a sequence, e.g. $\bw=(w_1,\dots,w_n)$ or $\bbX=(X_j)_{j\geq 1}$. 
	\begin{definition}\label{def:gewp}
		A \emph{general erased-word process} (GEWP) over $A$ is a stochastic process $(\bbW,\bbeta)=(\bbW_n,\eta_n)_{n\geq 1}$ such that for each $n$
		\begin{enumerate}
			\item[(i)] $\bbW_n=(W_{n,1},\dots,W_{n,n})$ is a random word of length $n$ over $A$,
			\item[(ii)] $\eta_n$ is uniform on $[n]$ and independent of the $\sigma$-field $\cF_n:=\sigma(\bbW_k,\eta_{k+1}:k\geq n)$,
			\item[(iii)] $\bbW_n=\erase(\bbW_{n+1},\eta_{n+1})$ almost surely.
		\end{enumerate}
		We call $\bbW=(\bbW_n)_{n\geq 1}$ the \emph{word-process} and $\bbeta=(\eta_n)_{n\geq 1}$ the \emph{eraser-process} of $(\bbW,\bbeta)$.
	\end{definition}
	\begin{figure}
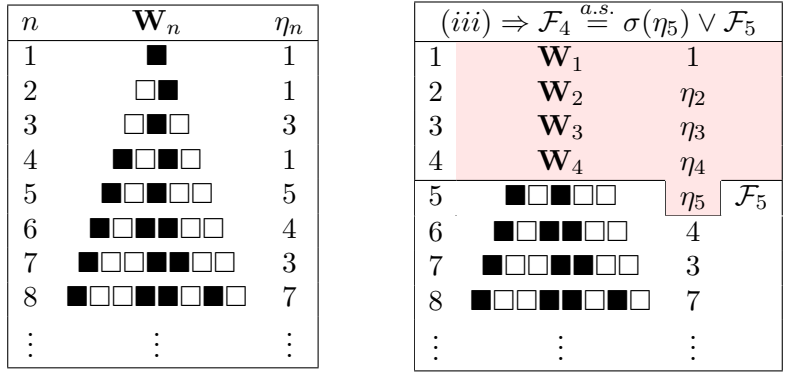

		\begin{tabular}{|ccc|}
			\hline
			$n$ & $\bbW_n$ & $\eta_n$\\
			\hline
			1& $\blacksquare$ & $1$\\
			
			2& $\square\blacksquare$ & $1$\\
			
			3& $\square\blacksquare\square$ & $3$\\
			
			4& $\blacksquare\square\blacksquare\square$ & $1$\\
			
			5& $\blacksquare\square\blacksquare\square\square$ & $5$\\
			
			6& $\blacksquare\square\blacksquare\blacksquare\square\square$ & $4$\\
			
			7& $\blacksquare\square\square\blacksquare\blacksquare\square\square$ & $3$\\
			
			8& $\blacksquare\square\square\blacksquare\blacksquare\square\blacksquare\square$ & $7$\\
			
			$\vdots$ & $\vdots$ & $\vdots$ \\
			
			\hline
		\end{tabular}
		\hspace{1cm}
		\begin{tabular}{|c c c c|} 
			\hline
			\multicolumn{4}{|c|}{$(iii)\Rightarrow\cF_4\as\sigma(\eta_5)\vee\cF_5$}\\
			\hline
			1 & \cellcolor{red!10}$\bbW_1$ &\cellcolor{red!10}$1$ &\cellcolor{red!10}\\
			2 & \cellcolor{red!10}$\bbW_2$ & \cellcolor{red!10}$\eta_2$&\cellcolor{red!10}\\
			3 & \cellcolor{red!10}$\bbW_3$  & \cellcolor{red!10}$\eta_3$&\cellcolor{red!10}\\
			4 & \cellcolor{red!10}$\bbW_4$  & \cellcolor{red!10}$\eta_4$&\cellcolor{red!10}\\
			\hhline{-->{\arrayrulecolor{red!10}}->{\arrayrulecolor{black}}-}
			\cellcolor{blue!0}5& \cellcolor{blue!0}$\blacksquare\square\blacksquare\square\square$ & \multicolumn{1}{|c|}{\cellcolor{red!10}$\eta_5$} &\cellcolor{blue!0}$\cF_5$\\
			\cline{3-3}\omit \vrule height.4pt\textcolor{blue!0}{\leaders\vrule\hfil}\vrule \cr 
			
			\cellcolor{blue!0}6& \cellcolor{blue!0}$\blacksquare\square\blacksquare\blacksquare\square\square$ & \cellcolor{blue!0}$4$&\cellcolor{blue!0}\\
			
			\cellcolor{blue!0}7& \cellcolor{blue!0}$\blacksquare\square\square\blacksquare\blacksquare\square\square$ & \cellcolor{blue!0}$3$&\cellcolor{blue!0}\\
			
			\cellcolor{blue!0}8& \cellcolor{blue!0}$\blacksquare\square\square\blacksquare\blacksquare\square\blacksquare\square$ & \cellcolor{blue!0}$7$&\cellcolor{blue!0}\\
			
			\cellcolor{blue!0}$\vdots$ & \cellcolor{blue!0}$\vdots$ & \cellcolor{blue!0}$\vdots$&\cellcolor{blue!0}\\
			
			\hline
		\end{tabular}
		\caption{Realization of a GEWP $(\bbW_n,\eta_n)_{n\geq 1}$ over $A=\{\blacksquare,\square\}$ and the information contained in 
			$\cF_5=\sigma(\bbW_k,\eta_{k+1}:k\geq 5)$.}
		\label{im}
	\end{figure}
	\newpage
	We study three closely related topics, more background is presented later:\\
	\textbf{1. Representation theory.}~What GEWPs exist? This topic is closely related to exchangeability theory and we use de~Finetti's theorem to obtain a de~Finetti-type representation result involving induced order statistics, see Theorem~\ref{thm:main1}. In this part of our studies we assume that the alphabet $A$ is a Borel space.\\
	\textbf{2. Martin boundary theory.}~ What is the topological behavior of GEWPs as ${n\rightarrow\infty}$? We present a homeomorphic description of a certain Martin boundary associated to GEWPs, see Theorem~\ref{thm:main2}. In this part we assume that the alphabet $A$ is a polish space. As a corollary we obtain a result located in the area of 'Limits of discrete structures', Corollary~\ref{cor:ssd}. The latter is closely related to a recent result by Choi~\&~Evans~\cite{evanschoi}.\\
	\textbf{3. Filtration theory.}~What is the informational behavior of GEWPs 'near' time $n=\infty$? This question is about the nature of backward filtrations generated by GEWPs, which are examples of so-called poly-adic filtrations: for each $n$ there exists a random variable uniformly distributed on a finite set, $\eta_n$, that is independent from $\cF_n$ and closes the 'informational gap' between $\cF_{n-1}$ and $\cF_n$, i.e. $\cF_{n-1}\as\sigma(\eta_n)\vee\cF_n$ (see Figure~\ref{im}). Given a poly-adic filtration, one is interested in the question if the filtration is generated by a sequence of independent RVs. We answer this question in Theorem~\ref{thm:main3}. Here we assume that $A$ is a Borel space.
	
	Besides the theorems, we think an essential contribution of our work lies in the presentation of the cross-connections between these three topics, as these connections can easily be translated to other erased-type processes, see Section~2.4. To our knowledge, in particular the connection between exchangeability and the theory of poly-adic filtrations has not get much attention yet. This connection is build upon folklore around the \emph{exchangeable linear order on $\bN$}, see Section~3.2. 
	
	We explain why we call the processes under consideration \emph{general} erased-word process: Laurent \cite{laurent} introduced so-called \emph{erased-word processes} (EWPs), where a stochastic process $(\bbW,\bbeta)=(\bbW_n,\eta_n)_{n\geq 1}$ is an EWP if (i)-(iii) from Definition~\ref{def:gewp} and the additional assumption
	\begin{enumerate}
		\item[(iv)] for each $n$ the letters $W_{n,1},\dots,W_{n,n}$ of $\bbW_n$ are iid
	\end{enumerate}
	hold. Because GEWPs are generalizations of EWPs, we use the additional \emph{general} in the name. We explain Laurent's results and their relations to our work in Section~2.3.
	
	Our paper is structured as follows:
	\vspace{-1.3cm}
	\renewcommand{\contentsname}{} 
	\tableofcontents
	
	\subsection{Notations}\label{sec:intro}
	We fix some notations and present basic knowledge as can be found in \cite{kallenberg}. Let $(S,\cR)$ be a measurable space. We usually do not name the $\sigma$-field $\cR$ explicitly, but for clarity we do so here. $\cM_1(S)$ denotes the set of probability measures on $S$, i.e. the set of $\sigma$-additive functions $P:\cR\rightarrow[0,1]$ with $P(S)=1$. We equip $\cM_1(S)$ with the $\sigma$-field generated by the projections $i_B:P\mapsto P(B)$ for $B\in\cR$. If $X$ is a $S$-valued random variable (RV) we denote the law of $X$ with $\law(X)\in\cM_1(S)$. If $\Xi$ is a $\cM_1(S)$-valued RV (a random probability measure) with law $\mu=\law(\Xi)\in\cM_1(\cM_1(S))$, the expectation of $\Xi$ (or disintegration of $\mu$) is given by the probability measure $\bE(\Xi)=\int Qd\mu(Q)\in\cM_1(S)$ defined by $B\in\cR\mapsto \bE(\Xi(B))=\int Q(B)d\mu(Q)$. Let $X$ be a $S$-valued RV defined on some background probability space $(\Omega,\cA,\bP)$ and let $\cF\subseteq \cA$ be a sub-$\sigma$-field. A regular conditional distribution of $X$ given $\cF$ is a $\cF$-measurable $\cM_1(S)$-valued RV, which we denote with $\cL(X|\cF)$, such that for each $B\in\cR$ it holds that $\cL(X|\cF)(B)=\bP(X\in B|\cF)=\bE(1_{\{X\in B\}}|\cF)$ almost surely. Regular conditional distributions do not exist under all circumstances, but they do if $S$ is a Borel space: A measurable space $S$  is called \textbf{Borel space} if there exists a Borel subset $E\subseteq [0,1]$ and a bijective map $f:S\rightarrow E$ such that $f, f^{-1}$ are measurable. If $S$ is a Borel space, then $\cL(X|\cF)$ exist, is almost surely unique and it holds that $\bE(\cL(X|\cF))=\cL(X)$. If $S$ is a Borel space, so is $\cM_1(S)$.\\
	If $\cF,\cG$ are two sub-$\sigma$-fields we write $\cF\subas\cG$ if for all $F\in\cF$ there is a $G\in\cG$ such that $\bP(F\Delta G)=0$ and we write $\cF\as\cG$ if both $\cF\subas\cG$ and $\cG\subas\cF$.
	If $S$ is a Borel space, $X$ is a $S$-valued RV and $Y$ is $S'$-valued RV then $\sigma(X)\subas\sigma(Y)$ if and only if there exists a measurable function $f:S'\rightarrow S$ with $X=f(Y)$ almost surely. Finally, we say that a $\sigma$-field $\cF$ is almost surely (a.s.) trivial if $\cF\as\{\emptyset,\Omega\}$. $\cF$ is a.s. trivial if and only if $\bP(F)\in\{0,1\}$ for all $F\in\cF$.
	
	\section{Main Results}
	
	\subsection{Representation Theory (Theorem 1)} We are interested in a description of the set of \emph{possible laws of GEWPs over a Borel space alphabet $A$}. A GEWP $(\bbW,\bbeta)=(\bbW_n,\eta_n)_{n\geq 1}$ over $A$ can be considered a random variable taking values in the Borel space ${\prod_{n\geq 1}A^n\times[n]}$. We introduce the set
	\begin{equation*}
		\cM(A):=\Big\{\law(\bbW,\bbeta): ~\text{$(\bbW,\bbeta)$ is a GEWP over $A$}\Big\},
	\end{equation*}
	i.e. $\cM(A)\subset\cM_1(\prod_nA^n\times[n])$. In Section~3 we show that GEWPs over $A$ are in one-to-one correspondence with \emph{jointly exchangeable pairs $(\bbY,L)$}, where $\bbY=(Y_j)_{j\geq 1}$ is an $A$-valued stochastic process and $L$ is random linear order on $\bN$. The random variable $Y_j\in A$ corresponds to the letter that gets erased by passing from $\bbW_j$ to $\bbW_{j-1}$ and $L$ is the unique linear order on $\bN$ such that $\eta_n$ equals the rank of $n$ in $\{1,2,\dots,n\}$ with respect to $L$ for each $n$. By identifying GEWPs with exchangeable random objects we find that the set $\cM(A)$ is a \emph{simplex}, i.e. a convex set in which every point has a unique representation as a mixture of extreme points. Exchangeability theory can be seen under an ergodic theory point-of-view and it is thus common to call laws that are extreme points \emph{ergodic} in such situations. We come back to the exchangeability point of view in Section~3. We define
	\begin{align*}
		\erg\cM(A)&:=\Big\{\law(\bbW,\bbeta): ~\text{$(\bbW,\bbeta)$ is an ergodic GEWP over $A$}\Big\}\\
				  &:=\text{extreme points of the convex set $\cM(A)$}.
	\end{align*}
	$\cM(A)$ being a simplex means that for any $P\in\cM(A)$ there exists a unique probability measure $\alpha\in\cM_1(\erg\cM(A))$ such that $P=\int_{\erg\cM(A)}Qd\alpha(Q)$ and this establishes a one-to-one correspondence between $\cM_1(\erg\cM(A))$ and $\cM(A)$. There are other ways of seeing that $\cM(A)$ is a simplex except from the connection to exchangeability: Dynkin \cite{dynkin} developed a theory of simplices in a measure theoretic framework that is applicable in our situation, we give details in Section~\ref{app:simpl}. Applying his results yields the following characterization of ergodicity:
	\begin{equation*}
		\text{$(\bbW,\bbeta)$ is ergodic if and only if $\cF_{\infty}:=\bigcap\nolimits_{n\geq 1}\cF_n$ is almost surely trivial.}
	\end{equation*} 
	Moreover, for any GEWP $(\bbW,\bbeta)$ over $A$ the conditional law $\law(\bbW,\bbeta|\cF_{\infty})$ is almost surely element of $\erg\cM(A)$ and the unique $\alpha\in\cM_1(\erg\cM(A))$ representing $\law(\bbW,\bbeta)$ as a mixture over extreme points is given by $\alpha=\law(\cL(\bbW,\bbeta|\cF_{\infty}))$. Theorem~\ref{thm:main1} below yields a unique description of $\erg\cM(A)$. 
	
	We need to introduce some notations from statistics, which play an important role throughout the paper: Let $\bS_n$ be the set of permutations of $[n]$. For real numbers $x_1,\dots,x_n$ with $x_i\neq x_j$ for all $i\neq j$ there exists a unique permutation $\pi\in\bS_n$ that arranges the $x$-values strictly increasingly, i.e. such that 
	$x_{\pi(1)}< \dots< x_{\pi(n)}$ holds. If $x_i=x_j$ for some $i\neq j$ there is no unique permutation arranging the values increasingly (non-strict), we choose $\pi$ to be the one that does it stably, i.e. $i<j\wedge x_{i}=x_{j}\Rightarrow\pi(i)<\pi(j)$. The \emph{permutation statistics} and \emph{order statistics} of $x_1,\dots,x_n$ are defined as 
	\begin{equation*}
		\ps(x_1,\dots,x_n):=\pi~~~\text{and}~~~\os(x_1,\dots,x_n):=(x_{1:n},\dots,x_{n:n}):=(x_{\pi(1)},\dots,x_{\pi(n)}).
	\end{equation*}
	In particular, $x_{1:n}\leq x_{2:n}\leq\cdots\leq x_{n:n}$ for all $x_1,\dots,x_n$ and $x_{1:n}<x_{2:n}<\dots<x_{n:n}$ holds iff $x_i\neq x_j$ for all $i\neq j$ iff $\ps(x_1,\dots,x_n)$ is the unique permutation arranging $x_1,\dots,x_n$ non-decreasingly. If $y_1,\dots,y_n$ are $A$-valued and $x_1,\dots,x_n$ are real valued, the \emph{induced order statistics of $y_1,\dots,y_n$ with respect to $x_1,\dots,x_n$} is defined as
	\begin{equation*}
		\ios(y_1,\dots,y_n,x_1,\dots,x_n):=(y_{\pi(1)},\dots,y_{\pi(n)})~~\text{with}~~\pi:=\ps(x_1,\dots,x_n).
	\end{equation*}
	Note that order statistics are real-valued vectors and induced order statistics are $A$-valued vectors, i.e. words over $A$. We direct the reader to \cite{bha} for an introduction to the statistical analysis of induced order statistics. 
	
	We now introduce the space used in Theorem~1 to parametrize $\erg\cM(A)$ bijectively:
	\begin{align*}
		\cC(A):&=\Big\{\rho\in\cM_1(A\times\bR):\rho(A\times[0,t])=t~\text{for all}~t\in[0,1]\Big\}\\
			   &=\Big\{\law(Y,U):~\text{$(Y,U)$ is a $A\times\bR$-valued RV with $\law(U)=\unif[0,1]$}\Big\}.
	\end{align*}
	\begin{theorem}\label{thm:main1}
		Let $\rho\in\cC(A)$ and $(Y_j,U_j)_{j\geq 1}$ iid~$\sim\rho$. For each $n$ define
		\begin{equation*}
			\bbW_n:=\ios(Y_1,\dots,Y_n,U_1,\dots,U_n)~~~\text{and}~~~\eta_n:=\#\{k\in[n]:U_k\leq U_n\}.
		\end{equation*}
		Then $(\bbW,\bbeta)=(\bbW_n,\eta_n)_{n\geq 1}$ is an ergodic GEWP. Writing $\GL(\rho):=\law(\bbW,\bbeta)\in\erg\cM(A)$ the map
		\begin{equation*}
			\cC(A)\rightarrow\erg\cM(A),~~\rho\mapsto\GL(\rho)
		\end{equation*} 
		is a bijection.
	\end{theorem}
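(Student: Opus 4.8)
The plan is to verify first that $\GL(\rho)$ is an ergodic GEWP for every $\rho\in\cC(A)$, and then that $\rho\mapsto\GL(\rho)$ is a bijection onto $\erg\cM(A)$; I treat the axioms of Definition~\ref{def:gewp} directly. Property (i) is immediate. For (iii), note that $\eta_{n+1}=\#\{k\in[n+1]:U_k\le U_{n+1}\}$ is exactly the rank of $U_{n+1}$ among $U_1,\dots,U_{n+1}$, so the letter in position $\eta_{n+1}$ of $\bbW_{n+1}=\ios(Y_1,\dots,Y_{n+1},U_1,\dots,U_{n+1})$ is $Y_{n+1}$; erasing it leaves the remaining $Y$'s in the order induced by $U_1,\dots,U_n$, which is $\bbW_n$. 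For (ii), since the $U_j$ are a.s. distinct, $\eta_n$ is uniform on $[n]$; the real point is independence from $\cF_n$. I would observe that every generator of $\cF_n=\sigma(\bbW_k,\eta_{k+1}:k\ge n)$ is a symmetric function of the first $n$ pairs $(Y_1,U_1),\dots,(Y_n,U_n)$ together with the later pairs: $\bbW_k$ with $k\ge n$ is invariant under permuting the first $n$ pairs, and $\eta_m$ with $m>n$ only counts how many of $U_1,\dots,U_{m-1}$ fall below $U_m$. Hence $\cF_n\subseteq\sigma(\mu_n,(Y_j,U_j)_{j>n})$, where $\mu_n$ is the empirical measure of the first $n$ pairs. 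Conditionally on that $\sigma$-field the first $n$ pairs are a uniformly random ordering of the atoms of $\mu_n$, so the rank $\eta_n$ of the $n$-th one is uniform on $[n]$ and independent of it, and a fortiori $\eta_n$ is independent of $\cF_n$.

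For ergodicity I use the stated characterization and show $\cF_\infty=\bigcap_n\cF_n$ is a.s.\ trivial. The inclusion from the previous step gives $\cF_n\subseteq\cE_n$, where $\cE_n$ is the $\sigma$-field of events invariant under permutations of the first $n$ coordinates of $(Y_j,U_j)_{j\ge1}$. Therefore $\cF_\infty\subseteq\bigcap_n\cE_n$, the exchangeable $\sigma$-field of the i.i.d.\ sequence $(Y_j,U_j)_{j\ge1}$, which is a.s.\ trivial by the Hewitt--Savage $0$--$1$ law; hence $\GL(\rho)\in\erg\cM(A)$.

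Injectivity I would obtain by recovering $\rho$ as a pushforward of $\law(\bbW,\bbeta)$. The letter erased in passing from $\bbW_n$ to $\bbW_{n-1}$ is $Y_n=W_{n,\eta_n}$, and the position of $Y_n$ inside $\bbW_m$ for $m\ge n$ equals $\#\{k\le m:U_k\le U_n\}$ and is a measurable function of $\eta_n,\dots,\eta_m$; dividing by $m$ and letting $m\to\infty$ yields $U_n$ by the law of large numbers. Thus $(Y_1,U_1)$ is an a.s.\ measurable function of the process, so $\rho=\law(Y_1,U_1)$ is determined by $\law(\bbW,\bbeta)=\GL(\rho)$.

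Surjectivity is the step I expect to be the main obstacle. Given an ergodic GEWP $(\bbW,\bbeta)$, set $Y_j:=W_{j,\eta_j}$ and let $L$ be the linear order on $\bN$ for which $\eta_n$ is the rank of $n$ in $\{1,\dots,n\}$. Through the correspondence between GEWPs and jointly exchangeable pairs $(\bbY,L)$, ergodicity of $(\bbW,\bbeta)$ becomes extremality of $(\bbY,L)$ among exchangeable pairs. I would then invoke the representation of the exchangeable linear order (Section~3.2): the asymptotic relative ranks $U_j:=\lim_m\tfrac1m\#\{k\le m:k\preceq_L j\}$ exist a.s., are a.s.\ distinct, induce $L$, and turn $(Y_j,U_j)_{j\ge1}$ into an exchangeable sequence. de~Finetti's theorem represents this sequence as conditionally i.i.d.\ given its directing random measure, and extremality forces that measure to be an a.s.\ constant $\rho$, whose uniform $U$-marginal places it in $\cC(A)$. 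Finally I would check that feeding this $\rho$ back into the construction returns $\bbW_n=\ios(Y_1,\dots,Y_n,U_1,\dots,U_n)$ and $\eta_n=\#\{k\le n:U_k\le U_n\}$ a.s., giving $\law(\bbW,\bbeta)=\GL(\rho)$. The delicate points are the a.s.\ existence of the rank limits and the transfer of extremality across the correspondence, both of which rely on the exchangeability theory developed in Section~3.
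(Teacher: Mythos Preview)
Your argument is correct and rests on the same exchangeability machinery as the paper, but it is organized differently. The paper establishes a single affine bijection between the simplex $\cM(A)$ of GEWP laws and the simplex $\cME(A^{\bN}\times\bL)$ of jointly exchangeable pairs $(\bbY,L)$ (Lemmas~\ref{lemma:1} and~\ref{lemma:2}), and then reads everything off at once: extreme points map to extreme points, so ergodic GEWPs correspond to ergodic exchangeable pairs, which by Proposition~\ref{prop:exchpair} are parametrized by $\cC(A)$. In particular, the paper never checks axiom~(ii) only for the i.i.d.\ case, and never invokes Hewitt--Savage explicitly; ergodicity, injectivity and surjectivity all fall out of the one bijection.

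You instead treat the forward direction $(\rho\mapsto\text{GEWP})$ by hand: your symmetric-function argument for the independence in (ii) is a minor variant of the paper's proof that $\bbW_n\perp S_n$, and your Hewitt--Savage step is the concrete mechanism behind the paper's abstract ``extreme points go to extreme points''. Your injectivity argument, recovering $U_n$ as the a.s.\ limit of relative positions, is more explicit than anything the paper writes down (the paper gets injectivity for free from the bijection). For surjectivity you return to the exchangeable-pair correspondence and de~Finetti, which is exactly the paper's route. The net effect: the paper's proof is shorter once the simplex framework is in place, while yours is more self-contained and makes the individual ingredients (exchangeability of the first $n$ pairs, the $0$--$1$ law, the rank limits) visible rather than hidden inside an affine isomorphism.
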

	The simplex property yields that a $\prod_nA^n\times[n]$-valued RV $(\bbW,\bbeta)$ is a GEWP iff there exists a probability measure $\alpha\in\cM_1(\cC(A))$ such that $\law(\bbW,\bbeta)=\int_{\cC(A)}\GL(\rho)d\alpha(\rho)$. In Section~3 we see that this can be strengthened in the following sense: For every GEWP $(\bbW,\bbeta)$ there exists a $\cC(A)$-valued random variable $\Xi$, defined on the same underlying probability space as $(\bbW,\bbeta)$, such that $\law(\bbW,\bbeta|\Xi)=\GL(\Xi)$ almost surely. Moreover, $\Xi$ is a.s. unique. In exchangeability theory such random measures are called \emph{random directing measures}.
	
	Theorem \ref{thm:main1} tells us that the distributions of words $\bbW_n$ that appear in the word-chain of ergodic GEWPs are always laws of induced order statistics of $Y_1,\dots,Y_n$ with respect to $U_1,\dots,U_n$ where $(Y_i,U_i)$ are iid with $U_i\sim\unif[0,1]$. We use the following notation: For $\rho\in\cM_1(A\times\bR), n\geq 1$ and $(Y_1,X_1),\dots,(Y_n,X_n)$ iid $\sim\rho$ we define
	\begin{equation}\label{eq:rios}
		\spread(\rho,n):=\law(\ios(Y_1,\dots,Y_n,X_1,\dots,X_n))~\in\cM_1(A^n).
	\end{equation}
	We use '$\spread$' to indicate that we talk about the law of an induced order statistics of certain random variables, the letters are capital to distinguish it from the function '$\ios$'.
	
	We now consider the multi-step co-transition behavior of the word-process in GEWPs. If one repeatedly erases uniformly chosen letters in a word $\bbw=(w_1,\dots,w_n)$ of length $n$ until one ends up with a word of length $k\leq n$, the resulting random word has the same distribution as if one chooses one of the $\binom{n}{k}$ subsequences in $\bw$ uniformly at random. The distribution of a random subsequence ($\RSS$) of length $k$ extracted from $\bbw$ is given by
	\begin{equation*}
		\RSS(\bbw,k):=\frac{1}{\binom{n}{k}}\sum\limits_{1\leq j_1<\dots<j_k\leq n}\delta_{(w_{j_1},\dots,w_{j_k})}~~\in\cM_1(A^k),
	\end{equation*}
	where $\delta_x$ is the Dirac measure at $x$. The probability measures $\RSS(\bbw,k), k\geq 1, \bbw\in\cup_{n\geq k}A^n$ can be seen as a system of \emph{co-transition kernels on the graded family $A^k, k\geq 1$} fulfilling the \emph{Chapman-Kolmogorov equations}: Given a word $\bbw$ of length $n$ and $k\leq m\leq n$, choosing a uniform subsequence of length $m\leq n$ from $\bbw$ and then choosing a uniform subsequence of length $k\leq m$ from that subsequence yields the same final distribution as choosing a uniform subsequence of length $k\leq n$ from $\bbw$ in the first place. As a formula:
	\begin{equation}\label{eq:cotrans}
		\RSS(\bbw,k)=\int_{A^m}\RSS(\bv,k)~d\RSS(\bbw,m)(\bv).
	\end{equation}
	Given a system of co-transition kernels satisfying (\ref{eq:cotrans}) one is interested in the behavior of Markov chains sharing this co-transition dynamic: 
	\begin{definition}
		A stochastic process $\bbW=(\bbW_n)_{n\geq 1}$ with $\bbW_n\in A^n$ for each $n$ is called a $\RSS$-chain if 
		\begin{equation*}
			\law\big(\bbW_k|\sigma(\bbW_m; m\geq n)\big)=\RSS(\bbW_n,k)~\text{almost surely for all $k\leq n$}.
		\end{equation*}
		We define 
		\begin{equation*}
			\cM'(A):=\Big\{\law(\bbW): ~\text{$\bbW$ is a $\RSS$-chain over $A$}\Big\}.
		\end{equation*}
	\end{definition}
	$\RSS$-chains over $A$ are Markov chains $(\bbW_1,\bbW_2,\dots)$ with $\bbW_n\in A^n$ that evolve backwards in time by erasing uniformly chosen letters, hence the multi-step co-transitions are given by choosing uniform subsequences. 
	\vspace{-0.2cm}
	\begin{figure}[H]
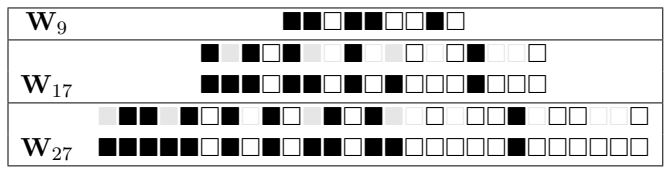

		\scalebox{0.9}{
		\begin{tabular}{|cc|}
			\hline
			$\bbW_9$   & $\blfi\blfi\whfi\blfi\blfi\whfi\whfi\blfi\whfi$\\
			\hline
					   & $\blfi\blgr\blfi\whfi\blfi\blgr\whgr\blfi\whgr\blgr\whfi\whgr\whfi\blfi\whgr\whgr\whfi$ \\
			$\bbW_{17}$& $\blfi\blfi\blfi\whfi\blfi\blfi\whfi\blfi\whfi\blfi\whfi\whfi\whfi\blfi\whfi\whfi\whfi$ \\
			\hline
			           & $\blgr\blfi\blfi\blgr\blfi\whfi\blfi\whgr\blfi\whfi\blgr\blfi\whfi\blfi\blgr\whgr\whfi\whgr\whfi\whfi\blfi\whgr\whfi\whfi\whgr\whgr\whfi$\\
			$\bbW_{27}$& $\blfi\blfi\blfi\blfi\blfi\whfi\blfi\whfi\blfi\whfi\blfi\blfi\whfi\blfi\blfi\whfi\whfi\whfi\whfi\whfi\blfi\whfi\whfi\whfi\whfi\whfi\whfi$ \\
			\hline
		\end{tabular}}
		\caption{$\RSS$-chain over $A=\{\blacksquare,\square\}$, $\bbW_9$ has distribution $\RSS(\bbW_{27},9)$.}
		\label{fig:rss}
	\end{figure}
	\vspace{-0.4cm}
	The connection of $\RSS$-chains to GEWPs is obvious: If$(\bbW,\bbeta)$ is a GEWP, then the word-process $\bbW$ is a $\RSS$-chain. This is due to the fact that the eraser $\eta_n$ used to go from $\bbW_n$ to $\bbW_{n-1}$ is independent from $\sigma(\bbW_{m}; m\geq n)$ and uniformly distributed on $[n]$. The Daniell-Kolmogorov existence theorem (\cite{kallenberg}, Theorem 5.14) yields the reverse statement: For any $\RSS$-chain $\bbW$ there exists a GEWP $(\bbW^*,\bbeta)$ such that $\cL(\bbW)=\cL(\bbW^*)$. Note that one needs a Borel space assumptions to apply this existence theorem. The law of a GEWP $(\bbW,\bbeta)$ is clearly determined by $\law(\bbW)$, hence the map
	\begin{equation*}
		\cM(A)\rightarrow\cM'(A),~\law(\bbW,\bbeta)\mapsto\law(\bbW)
	\end{equation*}
	is an affine measurable bijection (affine isomorphism). In particular, it maps extreme points (ergodic laws) to extreme points. One obtains a different way to see that $\cM(A)$ is a simplex: One can use the theory of Dynkin \cite{dynkin} to show that $\cM'(A)$ is a simplex (see Section \ref{app:simpl}) and then use that simplex property is preserved by measurable affine bijections (end of p.2 in \cite{dynkin}). As a consequence, a $\RSS$-chain is ergodic (:= its law is extreme point of $\cM'(A)$) iff the terminal $\sigma$-field generated by $\bbW$ is a.s. trivial. Moreover, for any GEWP $(\bbW,\bbeta)$ the terminal $\sigma$-field generated by the word-process $\bbW$ is a.s. equal to $\cap_n\cF_n$. Theorem~\ref{thm:main1} directly yields a representation result for $\RSS$-chains, one just forgets about $\bbeta$.
	
	Since $\RSS$-chains are Markov chains, the law of a $\RSS$-chain $\bbW=(\bbW_k)_{k\geq 1}$ is determined by the sequence of marginal laws $(\law(\bbW_k))_{k\geq 1}$. By Daniell-Kolmogorov existence theorem a sequence of laws $\bmu=(\mu_k)_{k\geq 1}$ with $\mu_k\in\cM_1(A^k)$ can appear as a sequence of marginal laws of a $\RSS$-chain if and only if 
	\begin{equation}\label{eq:maringallaws}
		\mu_k=\int_{A^n}\RSS(\bw,k)d\mu_n(\bw)~~\text{for all}~k\leq n.
	\end{equation}
	Let $\cM''(A)$ be the set of sequences $\bmu=(\mu_k)_{k\geq 1}$ satisfying (\ref{eq:maringallaws}). It is obvious that the map $\cM'(A)\rightarrow\cM''(A), \law(\bbW)\mapsto (\law(\bbW_k))_{k\geq 1}$ is a affine isomorphism, hence the convex subset $\cM''(A)\subset\prod_k\cM_1(A^k)$ is also a simplex. As a consequence of Theorem~\ref{thm:main1}, a sequence of laws $\bmu=(\mu_k)_{k\geq 1}$ satisfying (\ref{eq:maringallaws}) is an extreme point of $\cM''(A)$ if and only if there exists some $\rho\in\cC(A)$ such that 
	\begin{equation*}
		\mu_k=\spread(\rho,k)~~\text{for each}~k,
	\end{equation*}
	see (\ref{eq:rios}) for the definition of $\spread$ (random induced order statistics). We obtain
	\begin{corollary}\label{thm:main1alt}
		Let $A$ be a Borel space. A sequence of probability measures $\bmu=(\mu_k)_{k\geq 1}$ with $\mu_k\in\cM_1(A^k)$ for each $k$ satisfies (\ref{eq:maringallaws}) if and only if there exists a probability measure $\alpha\in\cM_1(\cC(A))$ such that 
		\begin{equation}\label{eq:id}
			\mu_k=\int_{\cC(A)}\spread(\rho,k)d\alpha(\rho)~~~\text{for all $k$.}
		\end{equation}
		(\ref{eq:id}) establishes a one-to-one correspondence between $\cM_1(\cC(A))$ and sequences $\bmu$ satisfying~(\ref{eq:maringallaws}).
	\end{corollary}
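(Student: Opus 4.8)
The plan is to obtain the corollary by transporting the simplex structure of $\cM(A)$ together with the parametrization of its extreme points from Theorem~\ref{thm:main1} along the affine isomorphism chain $\cM(A)\to\cM'(A)\to\cM''(A)$ assembled above. Write $\Phi\colon\cM(A)\to\cM''(A)$ for the composite affine measurable bijection, which sends $\law(\bbW,\bbeta)\mapsto(\law(\bbW_k))_{k\geq1}$. Since each of the two constituent maps is an affine isomorphism preserving extreme points, $\cM''(A)$ is a simplex and $\Phi$ carries $\erg\cM(A)$ bijectively onto $\erg\cM''(A)$.

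First I would compute $\Phi(\GL(\rho))$ for $\rho\in\cC(A)$. By Theorem~\ref{thm:main1} the word-process of the ergodic GEWP $\GL(\rho)$ satisfies $\bbW_k=\ios(Y_1,\dots,Y_k,U_1,\dots,U_k)$ with $(Y_j,U_j)_{j\geq1}$ iid $\sim\rho$; as $\rho\in\cC(A)$ has second marginal $\unif[0,1]$, the definition (\ref{eq:rios}) of $\spread$ gives $\law(\bbW_k)=\spread(\rho,k)$, hence $\Phi(\GL(\rho))=(\spread(\rho,k))_{k\geq1}$. Composing the bijection $\rho\mapsto\GL(\rho)$ of Theorem~\ref{thm:main1} with the restriction of $\Phi$ to extreme points then shows that $\rho\mapsto(\spread(\rho,k))_{k\geq1}$ is a measurable bijection from $\cC(A)$ onto $\erg\cM''(A)$.

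With these identifications the corollary is the coordinatewise reading of the barycentric representation in the simplex $\cM''(A)$. For the ``if'' direction, each extreme sequence $(\spread(\rho,k))_k$ already satisfies (\ref{eq:maringallaws}) because it equals $\Phi(\GL(\rho))\in\cM''(A)$; integrating $\mu_k=\int_{\cC(A)}\spread(\rho,k)\,d\alpha(\rho)$ against the kernel $\RSS(\,\cdot\,,k)$ and interchanging the two integrations, using $\int_{\cC(A)}\spread(\rho,n)\,d\alpha(\rho)=\mu_n$, shows that $\bmu$ again satisfies (\ref{eq:maringallaws}). For the ``only if'' direction and the uniqueness, I would invoke the simplex property: every $\bmu$ satisfying (\ref{eq:maringallaws}) lies in $\cM''(A)$ and so has a unique representation as a mixture $\int Q\,d\beta(Q)$ over $\erg\cM''(A)$ with $\beta\in\cM_1(\erg\cM''(A))$; pulling $\beta$ back to $\alpha\in\cM_1(\cC(A))$ along the bijection of the previous paragraph and reading the barycenter coordinatewise in $\prod_k\cM_1(A^k)$ yields precisely (\ref{eq:id}), with $\alpha\leftrightarrow\bmu$ one-to-one.

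The main obstacle is organizational rather than substantive: one must check that $\rho\mapsto(\spread(\rho,k))_k$ is a measurable isomorphism onto $\erg\cM''(A)$ with the $\sigma$-field inherited from Dynkin's framework, so that the pushforward gives a genuine one-to-one correspondence $\cM_1(\cC(A))\leftrightarrow\cM_1(\erg\cM''(A))$, and that the abstract barycenter map of the simplex acts coordinatewise on the product space $\prod_k\cM_1(A^k)$ so that the representation collapses to the single family of identities (\ref{eq:id}). Both points follow routinely from the affine-isomorphism machinery and the measurability statements already in place.
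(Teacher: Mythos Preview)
Your proposal is correct and follows essentially the same approach as the paper: the corollary is obtained exactly by transporting the simplex structure and the parametrization of extreme points from Theorem~\ref{thm:main1} along the affine isomorphism chain $\cM(A)\to\cM'(A)\to\cM''(A)$, and the paper presents this as an immediate consequence without a separate formal proof. Your write-up is more detailed than the paper's one-line derivation, but the logic is identical.
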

	

	\subsection{Martin Boundary Theory (Theorem 2)}
	Martin Boundary theory is a topological topic, hence we make a topological assumptions on the alphabet $A$. A natural assumption that is downwards compatible to the Borel space setting is to assume that $A$ is a polish space: A topological space $S$ is called polish space if there exists a metric $d$ on $S$ that generates the topology on $S$ and makes $(S,d)$ a complete separable metric space.
	The Borel $\sigma$-field on a polish space $S$ makes it a Borel space. If $S$ is a polish space, we equip $\cM_1(S)$ with the topology of weak convergence, i.e. the topology generated by the maps $i_f:P\mapsto \int_SfdP$ for $f:S\rightarrow \bR$ bounded continuous, which makes $\cM_1(S)$ a polish space again. Compatible metrics on $\cM_1(S)$ are given by Wasserstein distances based on compatible metrics on $S$, we work with this in Section~4. Note that 
	the Borel-$\sigma$-field on $\cM_1(S)$ coincides with the $\sigma$-field generated by $P\mapsto P(B), B\subset S$ measurable. In this section we assume that the alphabet $A$ is a polish space. We use that countable products of polish spaces are polish again and consider polish spaces of the form $A^k$ and $A\times\bR$.
	
	The version of Martin boundary theory we refer to is most usually studied in the context of (highly) transient countable state space Markov chains, where 'highly' transient means that for every state there is just one point in time in which the chain can attain this value with positive probability. Note that $\RSS$-chains over countable alphabets are of this form. We refer the reader to \cite{vershik, egw, ew, ge} for introductory literature.
	
	We note that it is not required to know anything about Martin boundary theory to understand the following definitions and theorems as we present the material in a self contained way. $|\bbw|\in\bN$ denotes the length of a word.
	
	\begin{definition}\label{def:rssconv} \begin{enumerate}
			\item[(i)] A sequence $(\bbw_n)_{n\geq 1}$ of words over $A$ is called \emph{$\RSS$-convergent}, if $|\bbw_n|\rightarrow\infty$ and for each $k$ the $\cM_1(A^k)$-valued sequence $(\RSS(\bbw_n,k))_{n\geq 1}$ converges weakly as $n\rightarrow\infty$. 
			\item[(ii)] The \emph{limit} of a $\RSS$-convergent sequence $(\bbw_n)_{n\geq 1}$ is given by $\bmu=(\mu_k)_{k\geq 1}$ with $\mu_k=\lim\limits_{n\rightarrow\infty}\RSS(\bbw_n,k)$ for each $k$.
			\item[(iii)] The \emph{Martin boundary of $\cup_{k\geq 1}A^k$ with respect to $\RSS$} is defined as
			$$\RSSB A:=\Big\{\bmu\in\prod\nolimits_{k\geq 1}\cM_1(A^k): \bmu~\text{is the limit of some $\RSS$-convergent sequence}\Big\}.$$
			We consider $\RSSB A$ to be a topological subspace of the polish product space $\prod_k\cM_1(A^k)$. 
		\end{enumerate}
	\end{definition}

	The following proposition lists some relations between $\RSS$-convergence, Martin boundary and $\RSS$-chains (and hence GEWPs). These relations are not special to our concrete situation, but follow from rather general theory about Markov chains with given co-transitions, see Remark~\ref{rem:gene} below. A proof of Proposition~\ref{prop:key} can be found in Section~\ref{app:prop}.
	
	\begin{proposition}\label{prop:key}
		\begin{enumerate}
			\item[(i)] Every $\RSS$-chain is almost surely $\RSS$-convergent.
			\item[(ii)] The a.s. limit of a $\RSS$-chains $\bbW$ is given by $(\law(\bbW_k|\cT))_{k\geq 1}$, where $\cT$ is the terminal $\sigma$-field generated by $\bbW$, i.e. $\cT=\cap_{n\geq 1}\sigma(\bbW_m:m\geq n)$.
			\item[(iii)] A $\RSS$-chain is ergodic iff its limit is almost surely constant.
			\item[(iv)] For every $\bmu\in\RSSB A$ there exists a $\RSS$-chain $\bbW$ such that $\law(\bbW_k)=\mu_k$ for all $k$.
			\item[(v)] For every ergodic $\RSS$-chain $\bbW$ exists $\bmu\in\RSSB A$ such that $\law(\bbW_k)=\mu_k$ for all $k$.
		\end{enumerate}
	\end{proposition}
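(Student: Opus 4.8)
The plan is to handle (i) and (ii) simultaneously by a backward martingale argument, to derive (iii) from the ergodicity criterion ``$\cT$ a.s.\ trivial'' already recorded in the text, and to obtain (iv) and (v) by passing to the limit in the Chapman--Kolmogorov identity (\ref{eq:cotrans}) together with the Daniell--Kolmogorov theorem.

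For (i) and (ii), fix a $\RSS$-chain $\bbW$ and abbreviate $\cG_n:=\sigma(\bbW_m:m\ge n)$, so that $(\cG_n)_n$ decreases to $\cap_n\cG_n=\cT$. For each fixed $k$ and each bounded measurable $f:A^k\to\bR$, the defining property of a $\RSS$-chain gives $\bE(f(\bbW_k)|\cG_n)=\int f\,d\RSS(\bbW_n,k)$ a.s.\ for all $n\ge k$; viewed along increasing $n$ this is a reverse martingale, hence by the reverse martingale convergence theorem it converges a.s.\ and in $L^1$ to $\bE(f(\bbW_k)|\cT)$. Since $A^k$ is polish, $\cM_1(A^k)$ carries a countable convergence-determining family of bounded continuous functions; applying the above to every member of this family and every $k$ and discarding the (countable) union of null sets yields, almost surely, $\RSS(\bbW_n,k)\to\law(\bbW_k|\cT)$ weakly for all $k$. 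As $|\bbW_n|=n\to\infty$ is automatic, this is exactly a.s.\ $\RSS$-convergence with limit $(\law(\bbW_k|\cT))_k$, proving (i) and (ii); here $\law(\bbW_k|\cT)$ exists because $A^k$ is Borel.

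For (iii) I use that the chain is ergodic iff $\cT$ is a.s.\ trivial. If $\cT$ is a.s.\ trivial then $\law(\bbW_k|\cT)=\law(\bbW_k)$, so by (ii) the limit is the constant $(\law(\bbW_k))_k$. Conversely, assume the limit is a.s.\ equal to some constant $\bmu$; then $\law(\bbW_k|\cT)=\mu_k$ a.s.\ for every $k$, which is precisely the statement that $\bbW_k$ is independent of $\cT$. To promote these coordinatewise independences to the joint statement I invoke that $\bbW$ is a backward Markov chain: conditionally on $\bbW_n$ the block $(\bbW_1,\dots,\bbW_n)$ is independent of the future, in particular of $\cT\subseteq\cG_n$. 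Combining this conditional independence with $\bbW_n\perp\cT$ in a short conditioning computation gives $(\bbW_1,\dots,\bbW_n)\perp\cT$ for every $n$, and a monotone-class passage then yields $\sigma(\bbW_m:m\ge1)\perp\cT$. Since $\cT\subseteq\sigma(\bbW_m:m\ge1)$, the tail field is independent of itself and therefore a.s.\ trivial, so $\bbW$ is ergodic. For (iv), write $\bmu\in\RSSB A$ as the limit of a $\RSS$-convergent sequence $(\bbw_N)$. For $k\le n$ the exact identity (\ref{eq:cotrans}) reads $\RSS(\bbw_N,k)=\int_{A^n}\RSS(\bv,k)\,d\RSS(\bbw_N,n)(\bv)$ whenever $|\bbw_N|\ge n$. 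For bounded continuous $f$ on $A^k$ the map $\bv\mapsto\int f\,d\RSS(\bv,k)$ is a finite average of $f$ precomposed with coordinate projections and hence bounded continuous on $A^n$; since $\RSS(\bbw_N,n)\to\mu_n$ weakly, I may let $N\to\infty$ inside the integral and obtain the consistency relation (\ref{eq:maringallaws}) for $\bmu$. The Daniell--Kolmogorov theorem, invoked as in the text, then furnishes a $\RSS$-chain with $\law(\bbW_k)=\mu_k$. Finally, for (v), if $\bbW$ is ergodic then (iii) and (ii) identify its a.s.\ limit as the constant $(\law(\bbW_k))_k$; hence for almost every $\omega$ the deterministic sequence of words $(\bbW_n(\omega))_n$ is $\RSS$-convergent with limit $(\law(\bbW_k))_k$, and choosing any such $\omega$ (the good set has probability one) exhibits $(\law(\bbW_k))_k\in\RSSB A$.

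The step I expect to be the genuine obstacle is the converse of (iii): turning the coordinatewise independences $\bbW_k\perp\cT$ into the joint independence $\sigma(\bbW_m:m\ge1)\perp\cT$. This is where the Markov property must be used in precisely the right form, and the conditional-independence bookkeeping ($\cT\subseteq\cG_n$, conditioning on $\bbW_n$, then sending $n\to\infty$) has to be carried out carefully; once it is, the self-independence trick makes triviality of $\cT$ immediate. By contrast, the analytic ingredient of (iv)---continuity of $\bv\mapsto\int f\,d\RSS(\bv,k)$ and the interchange of limit and integral---is routine, as are the martingale convergence and determining-class arguments behind (i) and (ii).
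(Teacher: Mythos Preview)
Your proof is correct. Parts (i), (ii), (iv) and (v) match the paper's argument essentially verbatim: reverse martingale convergence plus a countable convergence-determining class for (i)--(ii), continuity of $\bv\mapsto\int f\,d\RSS(\bv,k)$ to pass the Chapman--Kolmogorov identity to the limit for (iv), and reading off (v) from (i)--(iii) by picking a good $\omega$.

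The one place you diverge from the paper is exactly the step you flagged, the converse in (iii). You turn $\law(\bbW_k|\cT)=\mu_k$ into $\bbW_k\perp\cT$, combine with the Markov conditional independence $(\bbW_1,\dots,\bbW_n)\perp\cT\,|\,\bbW_n$ to get $(\bbW_1,\dots,\bbW_n)\perp\cT$, pass to the monotone-class limit, and conclude by self-independence of $\cT$. This is correct; the chain rule for conditional independence you use (if $X\perp Z\,|\,Y$ and $Y\perp Z$ then $(X,Y)\perp Z$) is standard. The paper instead proves the single $\sigma$-field identity
\[
\cT\ \as\ \sigma\big(\law(\bbW|\cT)\big)\ \as\ \sigma\big((\law(\bbW_k|\cT))_{k\ge1}\big),
\]
arguing $\cT\subas\sigma(\law(\bbW|\cT))$ from the definition of the tail field and the second $\subas$ from the Markov property (the finite-dimensional conditional laws are measurable functions of $\law(\bbW_n|\cT)$). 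From this identity both directions of (iii) drop out at once: the limit is a.s.\ constant iff the generating random variables are a.s.\ constant iff $\cT$ is a.s.\ trivial. The paper's route is a little slicker and delivers a statement that is informative in its own right; your route is perhaps more hands-on but entirely sound, and the ``obstacle'' you worried about is handled correctly.
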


	\begin{remark}\label{rem:gene}
		Proposition~\ref{prop:key} is not special to our concrete situation in the following sense: Let $S_k, k\geq 1$ be a sequence of polish spaces (e.g. $S_k=A^k$) and for all $k\leq n$ and $x\in S_n$ let $Q(x,k)\in\cM_1(A^k)$ be such that 
		$x\in A^n\mapsto Q(x,k)$ is measurable and $Q(x,k)=\int_{A^m}Q(y,k)dQ(x,m)(y)$ for each $k\leq m\leq n$ (e.g. $Q(x,k)=\RSS(\bbw,k)$). One can then introduce $Q$-chains, $Q$-convergence, limits and  Martin boundary analogously to the case of $Q=\RSS$ and 
		obtain all the statements form Proposition~\ref{prop:key}, possibly expect from~(iv), where a continuity assumption is needed. There are examples in which \emph{no $Q$-chains and no $Q$-convergent sequences exist}. A sufficient condition for existence is that the spaces $S_k, k\geq 1$ are compact metric spaces. See \cite{vershik} for general theory in the case where all $S_k$ are finite discrete spaces.
	\end{remark}

	Theorem~\ref{thm:main1} together with Proposition~\ref{prop:key} (v) yields that for every $\rho\in\cC(A)$ one can find a sequence $(\bbw_n)_{n\geq 1}$ of words over $A$ with $|\bbw_n|\rightarrow\infty$ such that
	\begin{equation}\label{eq:spreadrho}
		\spread(\rho,k)=\lim\limits_{n\rightarrow\infty}\RSS(\bbw_n,k)~~\text{for each $k$},
	\end{equation}
	where the convergence is weakly in $\cM_1(A^k)$. In particular, for any $\rho\in\cC(A)$ the sequence $(\spread(\rho,k))_{k\geq 1}$ is element of the Martin boundary $\RSSB A$. Proposition~\ref{prop:key}~(iv) yields that one can identify $\RSSB A$ with 
	\begin{equation*}
		\cM(\RSSB A):=\Big\{\law(\bbW,\bbeta)\in\cM(A):~\exists \bmu\in\RSSB A~\text{s.t.}~\law(\bbW_k)=\mu_k~\text{for all k}\Big\}
	\end{equation*}
	and (v) yields $\erg\cM(A)\subseteq\cM(\RSSB A)$ and hence the inclusion chain
	\begin{equation}\label{eq:incl}
		\erg\cM(A)\subseteq \cM(\RSSB A)\subseteq\cM(A).
	\end{equation}
	General Martin boundary theory can not tell whether these inclusions are strict or not, there are known cases for all four possible combinations. The case $\erg\cM(A)=\cM(A)$ can easily be detected: a simplex coincides with its extreme points iff the simplex consists of one point. In our situation this is the case iff $\#A=1$. In many concrete cases there is equality in the first inclusion. Our main theorem in this section, Theorem~\ref{thm:main2}, shows that this is also the case here: $\erg\cM(A)=\cM(\RSSB A)$ holds for any polish space $A$. In terms of marginal laws this means that limits of $\RSS$-convergent sequences are precisely given by sequences of the form $(\spread(\rho,k))_{k\geq 1}, \rho\in\cC(A)$.
	
	For $\bw=(w_1,\dots,w_n)\in A^n$ we define
	\begin{equation*}
		\rho_{\bw}:=\frac{1}{n}\sum_{j=1}^n\delta_{(w_j,\frac{j}{n})}~\in\cM_1(A\times\bR),
	\end{equation*}
	i.e. $\rho_{\bw}$ is the law of $(w_{J},\frac{J}{n})$, where $J\sim\unif[n]$. The map $\bw\mapsto \rho_{\bw}$ is an embedding of the space of all finite words over $A$  into the polish space $\cM_1(A\times\bR)$. We also need a \emph{mixed} version of $\rho_{\bw}$. For $\mu\in\cM_1(A^n)$ we define 
	\begin{equation*}
		\sample(\mu):=\int_{A^n}\rho_{\bv}d\mu(\bv)~\in\cM_1(A\times\bR),
	\end{equation*}
	i.e. $\sample(\mu)$ is the law of $(V_J,\frac{J}{n})$, where $\bbV=(V_1,\dots,V_n)$ has law $\mu$ and is independent from $J\sim\unif[n]$. In particular, $\sample(\delta_{\bbw})=\rho_{\bw}$. The abbreviation '$\sample$' stands for 'position sample'; $\sample(\mu)$ is the law obtained by first picking a random word with law $\mu$ and then sampling a letter from that word and remember also the (relative) position of the letter in the word.
	\begin{theorem}\label{thm:main2}
		Let $(\bw_n)_{n\geq 1}$ be a sequence of $A$-valued words with $|\bbw_n|\rightarrow\infty$ as $n\rightarrow\infty$. 
		The following two statements are equivalent:
		\begin{enumerate}
			\item[(i)] $(\bw_n)_{n\geq 1}$ is $\RSS$-convergent towards some limit $\bmu=(\mu_k)_{k\geq 1}\in\RSSB A$.
			\item[(ii)] $(\rho_{\bw_n})_{n\ge 1}$ converges in $\cM_1(A\times\bR)$ towards some $\rho\in\cC(A)$.
		\end{enumerate}
		If (i) and (ii) hold, one has 
		\begin{equation}\label{eq:onetoone}
			\rho = \lim_{k\rightarrow\infty}\sample(\mu_k)~~~\text{and}~~~\mu_k = \spread(\rho,k)~\text{for each $k$}
		\end{equation}
		and this establishes a homeomorphism between $\RSSB A$ and $\cC(A)$. 
	\end{theorem}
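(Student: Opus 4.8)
The plan is to realise the claimed bijection through the two maps that already appear in the statement --- the embedding $\bbw\mapsto\rho_{\bbw}$ of finite words into $\cM_1(A\times\bR)$, and the map $\rho\mapsto\spread(\rho,\cdot)$ --- and to show they are mutually (asymptotically) inverse. Two estimates drive everything. First, a \emph{collision bound}: drawing $k$ letters of $\bbw$ with replacement and re-reading them in positional order produces a uniform length-$k$ subsequence \emph{unless} two of the drawn positions coincide. Concretely, conditioning $\spread(\rho_{\bbw},k)$ on the event that the $k$ i.i.d.\ positions are pairwise distinct reproduces $\RSS(\bbw,k)$ exactly, and a union bound over pairs gives
\begin{equation*}
	d_{\mathrm{TV}}\big(\RSS(\bbw,k),\,\spread(\rho_{\bbw},k)\big)\le \binom{k}{2}\big/|\bbw|.
\end{equation*}
Second, a \emph{continuity statement}: for each fixed $k$ the map $\rho\mapsto\spread(\rho,k)$ is weakly continuous \emph{at every point of $\cC(A)$}. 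Indeed $\rho\mapsto\rho^{\otimes k}$ is weakly continuous and $\spread(\rho,k)$ is the pushforward of $\rho^{\otimes k}$ under $\ios$; the discontinuity set of $\ios$ is contained in the set where two of the $\bR$-coordinates coincide, which carries $\rho^{\otimes k}$-mass zero whenever the second marginal of $\rho$ is non-atomic, so the continuous mapping theorem applies at such $\rho$ even when the approximating measures are purely atomic (as the $\rho_{\bbw_n}$ are).

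Granting these, $(ii)\Rightarrow(i)$ is immediate: if $\rho_{\bw_n}\to\rho$ then $\rho\in\cC(A)$ (its second marginal is the limit of $\tfrac1n\sum_j\delta_{j/n}=\unif[0,1]$), continuity gives $\spread(\rho_{\bw_n},k)\to\spread(\rho,k)$ weakly, and the collision bound together with $|\bbw_n|\to\infty$ upgrades this to $\RSS(\bw_n,k)\to\spread(\rho,k)$ for every $k$. Hence $(\bw_n)$ is $\RSS$-convergent with $\mu_k=\spread(\rho,k)$, which is the second identity in (\ref{eq:onetoone}).

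For $(i)\Rightarrow(ii)$ I would first secure relative compactness of $(\rho_{\bw_n})$ in $\cM_1(A\times[0,1])$ by Prokhorov: its first marginal equals $\RSS(\bw_n,1)\to\mu_1$ and is therefore tight, while its second marginal converges to $\unif[0,1]$, so the joint laws are tight. Any subsequential limit $\rho$ has uniform second marginal, hence lies in $\cC(A)$, and by the two estimates satisfies $\spread(\rho,k)=\lim_n\RSS(\bw_n,k)=\mu_k$ for all $k$. Uniqueness of the limit point --- and with it convergence of the full sequence --- follows from the reconstruction identity $\rho=\lim_k\sample(\mu_k)$, which I regard as the technical heart. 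Unwinding the definitions, $\sample(\spread(\rho,k))$ is the law of $(Y_I,F_k(X_I))$, where $(Y_i,X_i)_{i\le k}$ are i.i.d.\ $\sim\rho$, $I$ is a uniformly chosen index, and $F_k$ is the empirical distribution function of $X_1,\dots,X_k$; since the $X$-marginal is $\unif[0,1]$, the coupling $(Y_I,X_I)\leftrightarrow(Y_I,F_k(X_I))$ and Glivenko--Cantelli give $\sample(\spread(\rho,k))\to\law(Y_I,X_I)=\rho$. Two distinct subsequential limits would yield the same $(\mu_k)_k$ and hence, by this formula, the same $\rho$; this simultaneously gives the first identity in (\ref{eq:onetoone}) and the injectivity of $\rho\mapsto(\spread(\rho,k))_k$.

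It remains to upgrade the bijection $\Phi\colon\rho\mapsto(\spread(\rho,k))_k$ --- well-defined into $\RSSB A$ by (\ref{eq:spreadrho}), surjective by $(i)\Rightarrow(ii)$, injective by the reconstruction identity --- to a homeomorphism. Continuity of $\Phi$ is the continuity statement above, applied with both $\rho_n,\rho\in\cC(A)$. For continuity of $\Phi^{-1}=\big(\bmu\mapsto\lim_k\sample(\mu_k)\big)$ I expect the main obstacle to be exchanging a double limit, and I would resolve it by a \emph{uniform} reconstruction bound: with a bounded $1$-Wasserstein metric $\cW$ on $\cM_1(A\times[0,1])$, the coupling above gives $\cW(\rho,\sample(\spread(\rho,k)))\le\bE\sup_x|F_k(x)-x|$, which is $O(k^{-1/2})$ \emph{uniformly in $\rho\in\cC(A)$}, since the law of $\sup_x|F_k(x)-x|$ depends only on the $X$-marginal $\unif[0,1]$ (Dvoretzky--Kiefer--Wolfowitz). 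Combining this uniform tail bound with the $n$-wise weak continuity of $\sample\colon\cM_1(A^k)\to\cM_1(A\times\bR)$ and a triangle inequality in $\cW$ (pick $k$ to kill the two uniform tails, then $n$ large to kill the middle term), one gets that $\mu_k^{(n)}\to\mu_k$ for all $k$ forces $\lim_k\sample(\mu_k^{(n)})\to\lim_k\sample(\mu_k)$, i.e.\ $\Phi^{-1}$ is continuous. This completes the homeomorphism.
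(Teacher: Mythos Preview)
Your proof is correct and shares the same architecture as the paper's: the collision bound is the paper's Lemma~\ref{lemma:22}(i), the continuity of $\rho\mapsto\spread(\rho,k)$ at points of $\cC(A)$ is Lemma~\ref{lemma:11}(ii), and your Glivenko--Cantelli/DKW reconstruction bound is exactly the coupling of Lemma~\ref{lemma:44} (the paper writes $S_k^{-1}(1)/k$ for what you call $F_k(X_I)$). The $(ii)\Rightarrow(i)$ direction and the continuity argument for $\Phi^{-1}$ are essentially identical.

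The genuine difference lies in $(i)\Rightarrow(ii)$. The paper does \emph{not} argue via Prokhorov compactness of $(\rho_{\bbw_n})$. Instead it first proves directly (Lemma~\ref{lemma:33}) that $(\sample(\mu_k))_{k\geq 1}$ is a Cauchy sequence in $\cM_1(A\times\bR)$, using a coupling estimate (Lemma~\ref{lemma:22}(ii)) which shows that $\dW\big(\sample(\RSS(\bbw,k)),\sample(\RSS(\bbw,m))\big)$ is small uniformly in $\bbw$ once $\min\{k,m\}$ is large; the limit $\rho$ is then obtained by completeness, and one shows $\rho_{\bbw_n}\to\rho$ via the identity $\rho_{\bbw}=\sample(\RSS(\bbw,|\bbw|))$ together with the same Cauchy-type bound. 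Your route is arguably cleaner: tightness follows for free from convergence of the two marginals, and you avoid the extra Lemma~\ref{lemma:22}(ii) entirely, at the price of invoking Prokhorov and then arguing uniqueness of subsequential limits via the reconstruction identity. The paper's route is more constructive (it produces $\rho$ before knowing anything about $(\rho_{\bbw_n})$) and stays entirely within explicit coupling bounds, which fits the paper's emphasis on Wasserstein methods. Both arguments ultimately hinge on the same uniform bound $\dW(\rho,\sample(\spread(\rho,k)))\to 0$ in $k$, uniformly over $\cC(A)$.
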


	The proof of Theorem~\ref{thm:main2} is presented in Section~4, we use Wasserstein distances and apply coupling methods to obtain bounds and show convergence.\\
	
	For the rest of Section~2.2 we discuss corollaries of Theorem~\ref{thm:main2}.
	
	\begin{corollary}
		$\erg\cM(A)=\cM(\RSSB A)$.
	\end{corollary}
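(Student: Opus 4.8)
The plan is to establish the reverse inclusion $\cM(\RSSB A)\subseteq\erg\cM(A)$, since the forward inclusion $\erg\cM(A)\subseteq\cM(\RSSB A)$ is already recorded in the inclusion chain~(\ref{eq:incl}). The idea is to chain together Theorem~\ref{thm:main1} and Theorem~\ref{thm:main2} through the common parameter space $\cC(A)$, exploiting that the law of a GEWP is completely pinned down by the marginal laws of its word-process.

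Concretely, I would start with an arbitrary $P=\law(\bbW,\bbeta)\in\cM(\RSSB A)$. By the definition of $\cM(\RSSB A)$ there is some $\bmu=(\mu_k)_{k\geq1}\in\RSSB A$ with $\law(\bbW_k)=\mu_k$ for all $k$. Theorem~\ref{thm:main2} then supplies a unique $\rho\in\cC(A)$ realizing $\bmu$, namely one with $\mu_k=\spread(\rho,k)$ for every $k$, as in~(\ref{eq:onetoone}).

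Next I would apply Theorem~\ref{thm:main1}: $\GL(\rho)\in\erg\cM(A)$ is the law of an ergodic GEWP, and by the very construction in Theorem~\ref{thm:main1} together with the definition~(\ref{eq:rios}) of $\spread$, the word-marginals of $\GL(\rho)$ are exactly $\spread(\rho,k)$. Hence $\GL(\rho)$ and $P$ have identical word-process marginals, both equal to $\mu_k=\spread(\rho,k)$ for all $k$. Finally I would invoke the fact already noted in Section~2.1 that a $\RSS$-chain is a Markov chain whose law is determined by its sequence of marginal laws, and that the law of a GEWP is in turn determined by the law of its word-process. Since the word-processes of $P$ and of $\GL(\rho)$ are $\RSS$-chains with coinciding marginals, they have the same law, so $P=\GL(\rho)\in\erg\cM(A)$. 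This yields $\cM(\RSSB A)\subseteq\erg\cM(A)$ and, combined with~(\ref{eq:incl}), the claimed equality.

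The argument carries essentially no obstacle of its own: all the analytic content is hidden in Theorem~\ref{thm:main1} and Theorem~\ref{thm:main2}. The only point requiring care is the bookkeeping that matching marginals forces matching full laws, which rests on the Markov/$\RSS$-chain structure and on the reduction of GEWP laws to word-process laws, both established earlier. One could alternatively phrase the whole proof as the commutativity of the diagram relating the three bijections $\cC(A)\leftrightarrow\erg\cM(A)$, $\cC(A)\leftrightarrow\RSSB A$ and $\RSSB A\leftrightarrow\cM(\RSSB A)$, but the direct marginal-matching route is the cleanest.
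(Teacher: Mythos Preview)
Your proposal is correct and follows essentially the same route as the paper: reduce to the inclusion $\cM(\RSSB A)\subseteq\erg\cM(A)$ via~(\ref{eq:incl}), apply Theorem~\ref{thm:main2} to obtain $\rho\in\cC(A)$ with $\mu_k=\spread(\rho,k)$, and then invoke Theorem~\ref{thm:main1} to conclude ergodicity. The only cosmetic difference is that the paper phrases the last step via Corollary~\ref{thm:main1alt} (extremality in $\cM''(A)$) whereas you spell out the marginal-matching argument explicitly; these are the same reasoning.
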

	\begin{proof}
		Because of (\ref{eq:incl}) we only need to show $\cM(\RSSB A)\subset \erg\cM(A)$. Let $\law(\bbW,\bbeta)\in\cM(\RSSB A)$ via $\bmu\in\RSSB A$, i.e. $\law(\bbW_k)=\mu_k$ for all $k$. By Theorem~\ref{thm:main2} there is a $\rho\in\cC(A)$ such that $\spread(\rho,k)=\mu_k=\law(\bbW_k)$ for each $k$. By Corollary~\ref{thm:main1alt} $(\mu_k)_{k\geq 1}$ is an extreme point of $\cM''(A)$ and hence $\law(\bbW,\bbeta)\in\erg\cM(A)$. 
	\end{proof}

	\begin{corollary}
		If $(\bbW,\bbeta)$ is a GEWP over the polish alphabet $A$, then $\rho_{\bbW_n}$ converges almost surely weakly towards its random directing measure $\Xi$, i.e. $\Xi$ is a $\cC(A)$-valued RV with $\law(\bbW,\bbeta|\Xi)=\GL(\Xi)$ almost surely.
	\end{corollary}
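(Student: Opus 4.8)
The plan is to realize the almost-sure limit of $\rho_{\bbW_n}$ through Theorem~\ref{thm:main2} and then to identify that limit with the directing measure $\Xi$ by comparing their level-$k$ marginals. First I would recall that the word-process $\bbW$ of the GEWP $(\bbW,\bbeta)$ is a $\RSS$-chain. By Proposition~\ref{prop:key}~(i) the path $(\bbW_n)_{n\ge 1}$ is almost surely $\RSS$-convergent, and by Proposition~\ref{prop:key}~(ii) its a.s.\ limit is $\bmu=(\mu_k)_{k\ge1}$ with $\mu_k=\law(\bbW_k\mid\cT)$, where $\cT=\bigcap_{n}\sigma(\bbW_m:m\ge n)$ is the terminal $\sigma$-field of $\bbW$. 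As noted in Section~2.1 one has $\cT\as\cF_{\infty}$, so conditional laws given $\cT$ and given $\cF_{\infty}$ coincide almost surely.

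Next I would apply Theorem~\ref{thm:main2} pathwise. For almost every $\omega$ the deterministic word sequence $(\bbW_n(\omega))_{n\ge1}$ satisfies $|\bbW_n(\omega)|=n\to\infty$ and, by the previous step, is $\RSS$-convergent; hence statement~(i) of Theorem~\ref{thm:main2} holds and therefore so does statement~(ii): the sequence $(\rho_{\bbW_n(\omega)})_{n\ge1}$ converges in $\cM_1(A\times\bR)$ to some $\rho(\omega)\in\cC(A)$, and by (\ref{eq:onetoone}) it satisfies $\mu_k(\omega)=\spread(\rho(\omega),k)$ for every $k$. Setting $\Xi':=\lim_n\rho_{\bbW_n}$, this already shows that $\rho_{\bbW_n}$ converges almost surely weakly to a $\cC(A)$-valued random variable. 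Being an a.s.\ limit of the measurable maps $\omega\mapsto\rho_{\bbW_n(\omega)}$ which, for each fixed $N$, equals $\lim_{n\ge N}\rho_{\bbW_n}$ and is thus $\sigma(\bbW_m:m\ge N)$-measurable, the variable $\Xi'$ is measurable and $\cT$-measurable, hence $\cF_{\infty}$-measurable.

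It remains to show $\Xi'=\Xi$ almost surely. On the one hand, the steps above give $\spread(\Xi',k)=\mu_k=\law(\bbW_k\mid\cT)=\law(\bbW_k\mid\cF_{\infty})$ a.s.\ for each $k$. On the other hand, the directing measure is $\cF_{\infty}$-measurable and satisfies $\law(\bbW,\bbeta\mid\cF_{\infty})=\GL(\Xi)$ a.s.\ (the ergodic-decomposition statement of Section~2.1/Section~3); since by the construction of $\GL$ in Theorem~\ref{thm:main1} the $k$-th marginal of $\GL(\rho)$ is exactly $\spread(\rho,k)$, taking the $\bbW_k$-marginal yields $\law(\bbW_k\mid\cF_{\infty})=\spread(\Xi,k)$ a.s. Comparing the two expressions gives $\spread(\Xi',k)=\spread(\Xi,k)$ almost surely, and simultaneously for all $k$ as there are only countably many. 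Because the correspondence $\rho\mapsto(\spread(\rho,k))_{k\ge1}$ is injective---it is the inverse of the homeomorphism of Theorem~\ref{thm:main2}, equivalently the $\alpha=\delta_\rho$ instance of Corollary~\ref{thm:main1alt}---we conclude $\Xi'=\Xi$ almost surely, which is the assertion.

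The main obstacle is not a single sharp estimate but the careful bookkeeping in the pathwise transfer from Theorem~\ref{thm:main2}: one must verify that the deterministic statement genuinely applies to almost every realization $(\bbW_n(\omega))_n$, that the resulting limit $\Xi'$ is measurable and tail-measurable, and that identifying the level-$k$ marginal of the ergodic law $\GL(\Xi)$ with $\spread(\Xi,k)$ is legitimate. Once these are settled, the uniqueness of the directing measure---equivalently the injectivity of $\rho\mapsto(\spread(\rho,k))_k$---closes the argument.
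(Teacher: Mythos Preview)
Your argument is correct and follows essentially the same route as the paper: Proposition~\ref{prop:key} gives almost-sure $\RSS$-convergence with limit $(\law(\bbW_k\mid\cT))_k$, and Theorem~\ref{thm:main2} applied pathwise yields $\rho_{\bbW_n}\to\Xi'\in\cC(A)$ with $\law(\bbW_k\mid\cT)=\spread(\Xi',k)$. The only cosmetic difference is that the paper simply \emph{defines} $\Xi$ to be this limit and reads off $\law(\bbW,\bbeta\mid\Xi)=\GL(\Xi)$ directly from $\law(\bbW_k\mid\cT)=\spread(\Xi,k)$, whereas you additionally invoke the pre-existing directing measure and close via the injectivity of $\rho\mapsto(\spread(\rho,k))_k$; both arrive at the same conclusion by the same mechanism.
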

	\begin{proof}
		By Proposition \ref{prop:key} $\bbW$ is almost surely $\RSS$-convergence with limit $(\cL(\bbW_k|\cT))_{k\geq 1}$, where $\cT$ is the terminal $\sigma$-field generated by the word-process $\bbW$. By Theorem~\ref{thm:main2} the a.s. $\RSS$-convergence of $\bbW$ implies the a.s. convergence of $\rho_{\bbW_n}$ towards some random probability measure $\Xi$ with $\bP(\Xi\in\cC(A))=1$. The theorem also implies $\cL(\bbW_k|\cT)=\spread(\Xi,k)$ almost surely for all $k$, which yields $\law(\bbW,\bbeta|\Xi)=\GL(\Xi)$ almost surely.
	\end{proof}

	Now we consider the case where $A$ is a finite alphabet (discrete topology) and explain how Theorem~\ref{thm:main2} contributes to the area of \emph{limits of discrete structures} in this case. In particular, we explain the connection of our work to a closely related paper by Choi~\&~Evans~\cite{evanschoi}. For $k\leq n, \bv=(v_1,\dots,v_k)\in A^k$ and $\bw=(w_1,\dots,w_n)\in A^n$ we consider
	\begin{equation*}
		\binom{\bw}{\bv}:=\#\Big\{1\leq j_1<j_2<\dots<j_k\leq n:~w_{j_i}=v_i~\text{for all}~i\in[k]\Big\},
	\end{equation*}
	i.e. $\binom{\bw}{\bv}$ counts how often the shorter word $\bv$ is embedded as subsequence in the longer word $\bw$. For example, if $A=\{a,b\}, \bv=aab$ and $\bw=abaab$ it holds that $\binom{\bw}{\bv}=3$, the embeddings of $\bv$ in $\bw$ are given by $\color{red}a\color{black}b\color{red}a\color{black}a\color{red}b\color{black}, \color{red}a\color{black}ba\color{red}ab\color{black}, ab\color{red}aab\color{black}$. A word of length $n$ has $\binom{n}{k}$ subsequences of length $k$, we introduce the \emph{subsequence density of $\bv$ in $\bw$} as
	\begin{equation*}
		\density(\bv,\bw):=\frac{\binom{\bw}{\bv}}{\binom{|\bw|}{|\bv|}}.
	\end{equation*}
	For example, $\density(aab,abaab)=3/\binom{5}{3}=0.3$. For $k=|\bv|\leq |\bw|=n$ the value $0\leq\density(\bv,\bw)\leq 1$ is the probability to pick $\bv$ when a subsequence of length $k$ is chosen uniformly from $\bw$. 
	
	\begin{corollary}[Subsequence density convergence]\label{cor:ssd}
		Let $A$ be finite and $(\bbw_n)_{n\geq 1}$ be a sequence of words over $A$ with $|\bbw_n|\rightarrow\infty$. Then the following two statements are equivalent:
		\begin{enumerate}
			\item[(i)] $\density(\bv,\bw_n)$ converges as $n\rightarrow\infty$ for each word $\bv$ over $A$.
			\item[(ii)] $\rho_{\bbw_n}$ converges weakly as $n\rightarrow\infty$ towards some $\rho\in\cC(A)$.
		\end{enumerate}
		If (i), (ii) hold, then $\lim_{n\rightarrow\infty}\density(\bv,\bw_n)=\spread(\rho,k)(\{\bv\})$ for each $k\geq 1, \bv\in A^k$.
	\end{corollary}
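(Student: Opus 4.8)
The plan is to deduce the corollary directly from Theorem~\ref{thm:main2}, the only additional ingredient being the elementary fact that, over a finite alphabet, weak convergence of measures on the finite discrete space $A^k$ coincides with coordinatewise convergence of the point masses, and that these point masses are precisely the subsequence densities. So I do not expect to reprove anything analytic; I merely translate the statement of Theorem~\ref{thm:main2} into the language of densities.

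First I would record the identification linking $\density$ to $\RSS$. For a word $\bv\in A^k$ and any word $\bw$ with $|\bw|=n\geq k$, comparing the two defining formulas gives
\[
\RSS(\bbw,k)(\{\bv\})=\frac{1}{\binom{n}{k}}\#\{1\le j_1<\dots<j_k\le n: w_{j_i}=v_i~\forall i\}=\density(\bv,\bw).
\]
Since $|\bbw_n|\to\infty$, for each fixed $\bv$ the quantity $\density(\bv,\bw_n)$ is defined for all large $n$, so asking for its convergence is meaningful (one only inspects the tail).

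Next comes the topological translation. When $A$ is finite with the discrete topology, each $A^k$ is finite and discrete, so $\cM_1(A^k)$ is homeomorphic to the standard simplex in $\bR^{A^k}$ and every indicator $1_{\{\bv\}}$ is bounded and continuous. Hence a sequence $(\RSS(\bbw_n,k))_n$ converges weakly in $\cM_1(A^k)$ if and only if $\RSS(\bbw_n,k)(\{\bv\})=\density(\bv,\bw_n)$ converges for every $\bv\in A^k$. Running this over all $k\geq 1$ shows that statement (i) of the corollary (convergence of $\density(\bv,\bw_n)$ for every word $\bv$) is equivalent to $(\bbw_n)$ being $\RSS$-convergent in the sense of Definition~\ref{def:rssconv}, i.e. to statement (i) of Theorem~\ref{thm:main2}; the limit lies automatically in $\RSSB A$ by the very definition of that boundary.

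Finally I would invoke Theorem~\ref{thm:main2}. Statement (ii) of the corollary is verbatim statement (ii) of the theorem, so the equivalence (i)$\iff$(ii) is immediate from the previous paragraph. For the limit identity, Theorem~\ref{thm:main2} yields $\mu_k=\spread(\rho,k)$ with $\mu_k=\lim_n\RSS(\bbw_n,k)$ weakly; evaluating at the singleton $\{\bv\}$ (the map $P\mapsto P(\{\bv\})$ being weakly continuous on the finite discrete space $A^k$) gives
\[
\lim_{n\to\infty}\density(\bv,\bw_n)=\lim_{n\to\infty}\RSS(\bbw_n,k)(\{\bv\})=\mu_k(\{\bv\})=\spread(\rho,k)(\{\bv\}).
\]
There is essentially no serious obstacle here: the entire analytic content is carried by Theorem~\ref{thm:main2}, and the only point requiring (minor) care is the passage between weak convergence on $\cM_1(A^k)$ and pointwise convergence of densities, which is exactly where the finiteness of $A$ enters.
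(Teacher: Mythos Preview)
Your proposal is correct and follows essentially the same approach as the paper: both identify $\RSS(\bbw,k)(\{\bv\})=\density(\bv,\bbw)$, observe that weak convergence on $\cM_1(A^k)$ for finite $A$ reduces to pointwise convergence of point masses, and then invoke Theorem~\ref{thm:main2}. Your write-up is somewhat more detailed than the paper's, but the argument is identical.
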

	\begin{proof}
		For each $k$ $A^k$ is a finite discrete space and hence $\cM_1(A^k)$ is in one-to-one correspondence with probability vectors and the topology on $\cM_1(A^k)$ is euclidean. In particular, for $\nu_n,\nu\in\cM_1(A^k)$ one has that $\nu_n$ converges weakly to $\nu$ if and only if $\nu_n(\{\bv\})\rightarrow\nu(\{\bv\})$ for each $\bv\in A^k$. Now let $\bbw=(w_1,\dots,w_n)\in A^n$ and consider $k\leq n$ and $\bv=(v_1,\dots,v_k)$. Since $\RSS(\bbw,k)(\{\bv\})=\density(\bv,\bbw)$ the result follows from Theorem~\ref{thm:main2}.
	\end{proof}
	\begin{remark}
		Corollary~\ref{cor:ssd} suggests that it is reasonable to call distributions of the form $\RSS(\bbw,k), \bbw\in A^n, n\geq k\geq 1$ \emph{ordered multivariate hypergeometric distributions} and distributions of the form 
		$\spread(\rho,k), \rho\in\cC(A) ,k\geq 1, $ \emph{ordered multinomial distributions}: Let $A=[m]$ be a finite set of colors. Consider an urn with $n$ balls of which $n_i$ are of color $i\in A$. Now pick $k\leq n=n_1+\dots+n_m$ balls uniformly without replacement. The resulting counting vector of drawn colors has (by definition) the multivariate hypergeometric distribution. Hence multivariate hypergeometric distributions are parametrized by pairs $(\bn,k)$ where $\bn=(n_1,\dots,n_m)$ describes the urn occupation and $k\leq n_1+\dots+n_m$ describes the number of drawn balls. The multinomial distributions can be obtained by considering fixed $k$ and sequence of urn occupations with $n_1+\dots+n_m=:n\rightarrow\infty$ and $n_i/n\rightarrow p_i$. Multinomial distributions are parametrized by pairs $(\bp,k)$ where $\bp=(p_1,\dots,p_m)$ is a probability vector and $k\geq 1$. Now consider the following ordered version of the experiment: the $n$ balls of which $n_i$ are of color $i\in A$ are no longer in an urn \emph{but lined up on a table}. The way they are lined up can be described by a word $\bw=(w_1,\dots,w_n)\in A^n$ with $n_i=\#\{k\in[n]:w_k=i\}$, where $w_i$ is the color of the $i$-th ball from the left. Now draw $k\leq n$ of the balls without replacement and \emph{keep the order of the drawn balls}, i.e. pick a uniform subsequence of length $k$ from $\bbw$. The resulting word of length $k$ has distribution $\RSS(\bw,k)$, hence the distribution family $\RSS(\bbw,k)$ parametrized by pairs $(\bbw,k)$ with $k\leq |\bw|$ can be considered the \emph{ordered} version of multivariate hypergeometric distribution. Considering limiting cases, i.e. $k$ fixed and $n\rightarrow\infty$, yields the ordered version of multinomial distributions. Corollary~\ref{cor:ssd} tells us that these are given by $\spread(\rho,k)$, hence are parametrized by pairs $(\rho,k)$ with $\rho\in\cC(A)$ and $k\geq 1$. The unordered version can be obtained from the ordered version by 'forgetting' about the order, i.e. by going from words $\bbw=(w_1,\dots,w_n)$ to counting vectors $\bn=(n_1,\dots,n_m), n_i=\#\{j:w_j=i\}$ and by going from probability measures $\rho\in\cC(A)$ to the first marginal $\bp=(p_1,\dots,p_m), p_i=\rho(\{i\}\times\bR)$.
	\end{remark}

		Choi \& Evans \cite{evanschoi} investigated the following situation: Given a finite alphabet $A=[m]$ they considered only such words $\bbw$ in which every letter from $A$ occurs evenly often; in particular they only considered words of length $n\cdot m$ for some $n$. Given such a word $\bbw\in A^{nm}$, they conditioned $\RSS(\bbw,km)$ on the event that in the resulting word every of the $m$ different letters occurs $k$-times. The authors introduced Martin boundaries analogously and obtained a representation result that is very similar to ours: The Martin boundary in their situation is in one-to-one correspondence with the set $\cC_{\text{even}}(A)=\{\rho\in\cC(A): \rho(\{i\}\times[0,1])=1/m~\text{for every $i\in A$}\}$. The method they used to obtain this result is very similar to our approach, i.e. exploring the connection of $\RSS$-chains and exchangeability first. A similar strategy to obtain descriptions of Martin boundaries was used before, see \cite{ew} and \cite{egw}.
	\begin{remark}
		One can introduce \emph{(ordered) embedding densities} for all kind of combinatorial structures, define a notation of convergence similar to Corollary~\ref{cor:ssd} (i) and ask for a nice description of the occurring limit density functions. \cite{hkmrs} considered permutations and identified limits with $2$-dimensional copulas, \cite{egw} studied ordered binary trees and \cite{ge} generalized this to non-binary ordered trees and beyond. In all situations there is a one-to-one correspondence between limits of convergent combinatorial structures and certain ergodic exchangeable laws involving joinings with exchangeable linear order (see Section~3). The case of permutation limits corresponds to the case of jointly exchangeable pairs of linear orders $(L,L')$, see Remark~\ref{rem:pairs} for more details. A general theory about these types of relations has been developed in the authors PhD thesis \cite{gdiss}. There are also 'unordered' versions of embedding density convergences closely connected to exchangeability theory, see \cite{austin, diaconisjanson} for the connection of graph limits and exchangeable random graphs. 
	\end{remark}
	Finally, we consider $A=\{0,1\}$. Here it is possible to give a nice graphical characterization of convergence of embedding densities. We mention this, because we later obtain a nice way of explaining graphically why certain constructions work (Filtration theory, Section~5). Let $\bw=(w_1,\dots,w_n)\in \{0,1\}^n$ and define the \emph{set} 
	\begin{equation}\label{eq:setw}
		\set(\bw):=\Big\{\Big(\frac{w_1+\dots+w_i}{n},\frac{i-(w_1+\dots+w_i)}{n}\Big):i\in\{0,1,\dots,n\}\Big\}.
	\end{equation}
	$\set(\bw)$ is a finite subset of the square $[0,1]^2$, hence compact. Let $\cK([0,1]^2)$ be the set of all non-empty compact subsets of $[0,1]^2$. The map $\bw\mapsto \set(\bw)$ is an embedding of $\cup_{k\geq 1}A^k$ to $\cK([0,1]^2)$. Let $d$ be the euclidean metric on $[0,1]^2$ and $\dha$ be the associated \emph{Hausdorff distance} on $\cK([0,1]^2)$. The space $(\cK[0,1]^2,\dha)$ is a compact metric space. Let $(\bbw_n)_{n\geq 1}$ be a sequence of words over $\{0,1\}$ with $|\bbw_n|\rightarrow\infty$. Basic topological considerations yield that $\rho_{\bw_n}$ converges weakly to $\rho\in\cC(\{0,1\})$ if and only if $\set(\bbw_n)$ converges with respect to $\dha$ to
	\begin{equation}\label{eq:setrho}
	\set(\rho):=\Big\{\big(\rho(\{0\}\times[0,t]),\rho(\{1\}\times[0,t])\big):0\leq t\leq 1\Big\}.
	\end{equation}
	
	\begin{exmp}
		Let $U,V$ be independent $\sim\unif[0,1]$ and let $\rho:=\law(1(V\leq U),U)\in\cC(\{0,1\})$. For each $0\leq t\leq 1$ it holds that 
		$$\rho(\{1\}\times[0,t])=\bP(V\leq U,U\leq t)=\int_0^t\bP(V\leq s)ds=\frac{t^2}{2}~~~\text{and}~~~\rho(\{0\}\times[0,t])=t-\frac{t^2}{2}.$$
		We simulated three words $\bw^1, \bw^2, \bw^3$ of length $80$ with distribution $\spread(\rho,80)$ independently and show $\set(\rho), \set(\bw^i), 1\leq i\leq 3$ in Figure~\ref{figg}.
		\begin{figure}
			\includegraphics[scale=0.55]{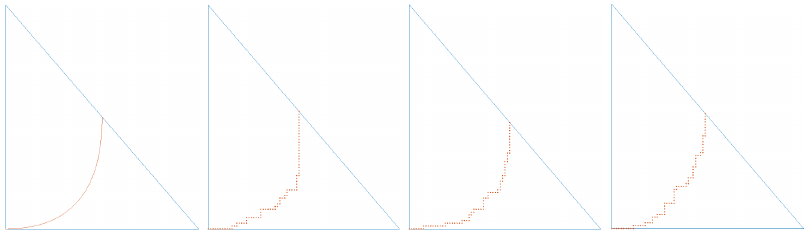}
			\caption{On the left $\set(\rho)$, then $\set(\bw^i)$ with $\bw^i\sim\spread(\rho,80)$.}\label{figg}
		\end{figure}
	\end{exmp}

	\subsection{Filtration Theory (Theorem 3)} In this part we investigate the backward filtrations $\bbF=(\cF_n)_{n\geq 1}$ generated by GEWPs. These filtrations fall into the class of so-called \emph{poly-adic (backward) filtrations}. As we explain below, interesting phenomena can occur when investigating the behavior of such filtrations 'near time $n=\infty$'. Initiated by A.~Vershik in the late 1960s, a rich mathematical theory has been developed around these type of (backward) filtrations. We refer the reader to Leuridan \cite{leuridan} for a more thorough introduction to this topic. In that paper one can also find an application of Laurents (and hence of our) results concerning the filtrations generated by EWPs. One does not need any prior knowledge about filtration theory to understand the following definitions.
	
	The literature about this topic usually deals with \emph{filtrations indexed by the negative integers}, i.e. increasing sequences of $\sigma$-fields $\cdots\subseteq\cF_{-3}\subseteq \cF_{-2}\subseteq\cF_{-1}$, whereas we consider \emph{backward filtrations indexed by the positive integers}, i.e. decreasing sequences of $\sigma$-fields $\cF_1\supseteq \cF_2\supseteq \cF_3\supseteq\cdots$. By defining $\cF'_n:=\cF_{-n}$ one can translate one situation to the other and hence both are equivalent settings. Since we have indexed GEWPs by positive integers, we choose to talk about backward filtrations indexed by $\bN$. The properties of backward filtrations we are interested in are in fact properties of filtered probability spaces, i.e. are in general not stable under change of measure. In particular, if the backward filtration under consideration is generated by a stochastic process, the properties we are interested in depend on the law of the process, not on the concrete probability spaces the process lives on. We now present the main definitions concerning poly-adic filtrations:
	
	\begin{definition}
		Let $(\Omega,\cA,\bP)$ be a probability space and let $\bbF=(\cF_n)_{n\geq 1}$ be a backward filtration, i.e. a decreasing sequence of $\sigma$-fields $\cA\supseteq\cF_1\supseteq\cF_2\supseteq\cdots$. 
		\begin{enumerate}
			\item[(i)] $\bbF$ is called \emph{kolmogorovian} if $\cF_{\infty}:=\cap_{n\geq 1} \cF_n$ is almost surely trivial.
			\item[(ii)] $\bbF$ is called \emph{poly-adic} if there exists stochastic process $\bbeta$, defined on $(\Omega,\cA,\bP)$, such that for each $n$
				\begin{enumerate}
					\item[(a)] $\eta_n$ is independent from $\cF_n$,
					\item[(b)] $\cF_n\as\sigma(\eta_{n+1})\vee\cF_{n+1}$,
					\item[(c)] $\eta_n$ is uniformly distributed on a finite set.
				\end{enumerate}
				$\bbeta$ is called a \emph{process of local innovations} for $\bbF$. If $\eta_n$ is uniform on a set with $r_n\in\bN$ elements, the term 'poly' is specified and $\bbF$ is called $(r_n)$-adic.
			\item[(iii)] $\bbF$ is called of \emph{product-type} if there exists a sequence of \emph{independent} RVs $\bbeta$ that \emph{generate} $\bbF$, i.e. such that $\cF_n\as\sigma(\eta_k:k\geq n+1)$ holds for all $n$.
		\end{enumerate}
	\end{definition}
	\begin{remark}
		The term '$(r_n)$-adic' is well defined: If $\bbF$ is a poly-adic filtration, $\bbeta, \bbeta^*$ are two processes of local innovations for $\bbF$ and $\eta_n$ is uniformly distributed on a set with $r_n$ elements, so is $\eta^*_n$.
	\end{remark}
	\begin{remark}
		$\eta_1$ plays an insignificant role in our definitions and in fact, the process of local innovations is usually shifted in the literature, i.e. $(\eta_{k+1})_{k\geq 1}$ is considered instead of $(\eta_k)_{k\geq 1}$. We decided to not shift $\bbeta$, since it is more convenient to have $\eta_n\in[n]$ instead of $\eta_n\in[n+1]$ for most part of our studies. 
	\end{remark}
	If $(\bbW,\bbeta)$ is a GEWP, then $\bbF=(\cF_n)_{n\geq 1}$ with $\cF_n:=\sigma(\bbW_k,\eta_{k+1}:k\geq n)$ is poly-adic due to the eraser-process $\bbeta$: (ii, a, c) follow immediately from Definition~\ref{def:gewp} and since $\cF_n=\sigma(\bbW_n)\vee\sigma(\eta_{n+1})\vee\cF_{n+1}$ and $\bbW_n=\erase(\bbW_{n+1},\eta_{n+1})$ almost surely, it follows that $\sigma(\bbW_n)\subas\sigma(\eta_{n+1})\vee\cF_{n+1}$ and hence (ii, b) holds. Since $\eta_n$ is uniform on the finite set $[n]$, the filtration $\bbF$ is $(n)$-adic.\\
	
	To get an idea of what the study of poly-adic filtrations is about, consider a poly-adic filtration $\bbF$ with process of local innovations $\bbeta$. Inductively applying property (ii, b) starting from $n=1$ yields
	\begin{equation*}
		\cF_1\as\sigma(\eta_2,\eta_3,\dots,\eta_n)\vee\cF_n~~\text{for each $n$}.
	\end{equation*}
	Again by property (ii, b) we see that $\sigma(\eta_m)\subas\cF_n$ for all $m>n$ and hence
	\begin{equation*}
		\cF_1\as\sigma(\eta_k:k\geq 2)\vee\cF_n~~\text{for each $n$}.
	\end{equation*}
	Taking the intersection over all $n$ on the right hand side yields 
	\begin{equation*}
		\cF_1\as\largecap_{n\geq 1}[\sigma(\eta_k:k\geq 2)\vee\cF_n].
	\end{equation*}
	Since the intersection is taken over all $n$ and the term $\sigma(\eta_k:k\geq 2)$ does not depend on $n$, one may wonders if it is allowed to \emph{interchange the order of taking intersection $\cap$ and supremum $\vee$} in this case. That is, one asks if 
	\begin{equation}\label{eq:int}
		\cF_1\as\largecap_{n\geq 1}[\sigma(\eta_k:k\geq 2)\vee\cF_n]\overset{\color{red}?\color{black}}{\as}\sigma(\eta_k:k\geq 2)\vee\largecap_{n\geq 1}\cF_n=\sigma(\eta_k:k\geq 2)\vee\cF_{\infty}
	\end{equation}
	holds. \emph{This is in general not the case}, we will see examples of this later. 
	\begin{remark}
		The question when it is allowed to interchange $\cap$ and $\vee$ was studied by von~Weizsäcker \cite{weiszaecker} in a very general setting, not just in the context of poly-adic filtrations. He presented some equivalent conditions for when the interchange is allowed. However, as these conditions are very abstract and stated in a very general setting, we do not see a way to apply them in our studies.	
	\end{remark}
	Suppose it would be allowed to interchange $\cap$ and $\vee$ in (\ref{eq:int}), i.e. suppose $\cF_1\as\sigma(\eta_k:k\geq 2)\vee \cF_{\infty}$ holds. If $\bbF$ would additionally be kolmogorovian, we would obtain $\cF_1\as\sigma(\eta_k:k\geq 2)$ and so $\bbF$ would be generated by $\bbeta$, hence be of product-type. The theory of poly-adic filtrations goes far beyond the question if the interchange of $\cap$ and $\vee$ is allowed for a concrete process of local innovations: One is interested if a generating processes of local innovations exist at all. We state some facts and refer the reader to \cite{leuridan, laurent} for details
	\begin{enumerate}
		\item Processes of local innovations are not unique.
		\item It may be that a process of local innovations $\bbeta$ does not generate $\bbF$ although there exists some other process of local innovations $\bbeta^*$ that does. Equivalent: $\bbF$ being of product-type does not imply that \emph{every} process of local innovations is generating.
		\item By Kolmogorov's zero-one law a necessary condition for a poly-adic filtration to be of product type is that it is kolmogorovian (hence the name).
		\item \emph{There exist kolmogorovian poly-adic filtrations that are not of product-type.} 
		\item There are known equivalent conditions for a poly-adic filtration being of product-type, but it would go beyond the scope of this paper to present them here. We refer the reader to \cite{laurent2}.
	\end{enumerate}
	Given a GEWP $(\bbW,\bbeta)$ we are interested in the question if the $(n)$-adic backward filtration $\bbF$ generated by $(\bbW,\bbeta)$ is of product-type or not. By (3) and the fact that $\bbF$ is kolmogorovian iff $(\bbW,\bbeta)$ is ergodic, we only consider ergodic GEWPs. 
	
	A first natural question coming up is to ask if the eraser-process $\bbeta$ may already be generating $\bbF$. This is answered by the following proposition, which we proof in Section~5:
	\begin{proposition}\label{prop:filtrationgeneratedbyusual}
		The backward filtration generated by an ergodic GEWP is generated by the eraser-process if and only if the law of the GEWP is given by $\GL(\rho)$ where $\rho\in\cC(A)$ is of the form $\rho=\law(f(U),U)$ for some measurable function $f:[0,1]\rightarrow A$ and $U\sim\unif[0,1]$.
	\end{proposition}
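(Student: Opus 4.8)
The plan is to argue entirely inside the representation of Theorem~\ref{thm:main1}. Fix $\rho\in\cC(A)$, let $(Y_j,U_j)_{j\geq 1}$ be iid $\sim\rho$, and realize the ergodic GEWP of law $\GL(\rho)$ by $\bbW_n=\ios(Y_1,\dots,Y_n,U_1,\dots,U_n)$ and $\eta_n=\#\{k\in[n]:U_k\leq U_n\}$. The letter $Y_k$ is intrinsic to the process: it is the entry at position $\eta_k$ of $\bbW_k$, i.e. the letter removed when passing from $\bbW_k$ to $\bbW_{k-1}$, so $\sigma(Y_k)\subas\cF_{k-1}$. First I would record the decomposition
\begin{equation}\label{eq:eraser-dec}
	\cF_n\as\sigma(\bbW_n)\vee\sigma(\eta_k:k\geq n+1)\vee\sigma(Y_k:k\geq n+1),
\end{equation}
which follows by induction on $m\geq n$ from $\bbW_{m}=\erase(\bbW_{m+1},\eta_{m+1})$ and its inverse $\bbW_{m+1}=\ins(\bbW_m,\eta_{m+1},Y_{m+1})$ (insert $Y_{m+1}$ into $\bbW_m$ at position $\eta_{m+1}$). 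As $\sigma(\eta_k:k\geq n+1)\subas\cF_n$ always holds, \eqref{eq:eraser-dec} reduces the claim to the statement that the eraser-process generates $\bbF$ if and only if
\begin{equation}\label{eq:eraser-red}
	\sigma(\bbW_n)\vee\sigma(Y_k:k\geq n+1)\subas\sigma(\eta_k:k\geq n+1)\quad\text{for all }n.
\end{equation}

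For the \emph{only if} direction I would use the decisive asymmetry that every $\eta_k=1+\#\{i<k:U_i<U_k\}$ is a function of $\bbU=(U_j)_{j\geq 1}$ alone and never involves the letters, so $\sigma(\eta_k:k\geq n+1)\subseteq\sigma(\bbU)$. Assuming \eqref{eq:eraser-red} and taking $n=k-1$, the letter $Y_k\in\cF_{k-1}$ becomes $\sigma(\eta_j:j\geq k)$-measurable, hence $\sigma(\bbU)$-measurable: $Y_k=g(\bbU)$ almost surely. But $(Y_k,U_k)$ is independent of $(U_i)_{i\neq k}$, so conditionally on $U_k=u$ the variable $g(\bbU)$ is at once a function of $(U_i)_{i\neq k}$ and independent of it, forcing $g(\bbU)=f(U_k)$ almost surely for a measurable $f:[0,1]\to A$. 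Therefore $\rho=\law(Y_1,U_1)=\law(f(U_1),U_1)$, the asserted form.

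For the \emph{if} direction I would assume $Y_j=f(U_j)$ almost surely, so that $\bbW_n=(f(U_{1:n}),\dots,f(U_{n:n}))$ and $Y_k=f(U_k)$, and then verify \eqref{eq:eraser-red} by reconstructing the ordered values $U_{1:n},\dots,U_{n:n}$ and each $U_k$, $k>n$, from $(\eta_j)_{j\geq n+1}$. Unwinding $\eta_j=1+\#\{i<j:U_i<U_j\}$ inductively recovers the relative order of all future points $\{U_i:i>n\}$, and then for each $j>n$ the gap $\#\{i\leq n:U_i<U_j\}=(\eta_j-1)-\#\{n<i<j:U_i<U_j\}$, i.e. which of the $n+1$ intervals cut out by the sorted first-$n$ points $U_j$ lands in. All of these are measurable functions of $(\eta_i)_{n<i\leq j}$. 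By Glivenko--Cantelli the empirical measure of $U_{n+1},\dots,U_m$ tends to $\unif[0,1]$, so $\frac1{m-n}\#\{n<i\leq m:U_i<U_{\ell:n}\}\to U_{\ell:n}$ and $\frac1{m-n}\#\{n<i\leq m:U_i\leq U_k\}\to U_k$, where the counts on the left are the explicit gap- and order-functions just described; hence $U_{\ell:n}$ and $U_k$ are $\sigma(\eta_j:j\geq n+1)$-measurable. Applying $f$ returns $\bbW_n$ and each $Y_k$, giving \eqref{eq:eraser-red}.

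The main obstacle I anticipate lies in the \emph{if} direction's bookkeeping: one must check carefully that the full relative order of the future points and every gap-assignment are genuinely extractable from the raw integer sequence $(\eta_j)_{j\geq n+1}$ by the inductive unwinding above, and that the almost-sure limits define honest $\sigma(\eta_j:j\geq n+1)$-measurable maps (e.g. by fixing a rational subsequence of $m$'s on which convergence holds almost surely). The \emph{only if} direction is comparatively soft, resting solely on the observation that the erasers are functions of $\bbU$ and on the pairwise independence structure of $(Y_j,U_j)_{j\geq1}$.
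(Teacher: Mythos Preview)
Your proposal is correct and follows essentially the same line as the paper. The \emph{only if} direction is identical in spirit: the paper observes that $\sigma(\bbeta)\as\sigma(\bbU)$, deduces that $Y_1=\bbW_1$ is $\sigma(\bbU)$-measurable, and then uses independence of $(Y_1,U_1)$ from $(U_j)_{j\geq 2}$ to conclude $\cL(Y_1|\bbU)=\cL(Y_1|U_1)$ is a.s.\ Dirac---exactly your argument, phrased via conditional laws rather than conditioning on $U_k=u$.

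The one genuine difference is in the \emph{if} direction. The paper isolates the key reconstruction fact as a separate statement (Proposition~\ref{prop:eta}): $\sigma(\eta_k:k\geq n+1)\as\sigma(U_{1:n},\dots,U_{n:n})\vee\sigma(U_k:k\geq n+1)$, and proves it by a short symmetry argument (write $\sigma(\bbeta)=\sigma(S_n)\vee\sigma(\eta_k:k\geq n+1)$ and $\sigma(\bbU)=\sigma(S_n)\vee\sigma(\os(U_1,\dots,U_n))\vee\sigma(U_k:k\geq n+1)$, then cancel the independent factor $\sigma(S_n)$). Once this is in hand, the \emph{if} direction is one line: $\sigma(\bbW_k)\subas\sigma(\os(U_1,\dots,U_k))\subas\sigma(\eta_n:n\geq k+1)$. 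You instead redo this reconstruction explicitly, showing that $(\eta_j)_{j>n}$ determines the ranks $S_m^{-1}(i)$ for $i>n$ (hence the relative order of future $U$'s and the gap assignments), and then recover each $U_{\ell:n}$ and $U_k$ as an almost-sure law-of-large-numbers limit. Your bookkeeping is sound---the inductive update $S_{m+1}^{-1}(i)=S_m^{-1}(i)+1_{\{S_m^{-1}(i)\geq\eta_{m+1}\}}$ with $S_i^{-1}(i)=\eta_i$ shows the ranks of the ``new'' indices never depend on $S_n$---so the anticipated obstacle dissolves. The paper's packaging buys brevity and a clean statement reusable elsewhere (indeed it is used again in the proof of Theorem~\ref{thm:main3}); your route is more self-contained but longer. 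Your decomposition~\eqref{eq:eraser-dec} is correct but not strictly needed: once one knows $\sigma(\os(U_1,\dots,U_n))\vee\sigma(U_k:k\geq n+1)\subas\sigma(\eta_k:k\geq n+1)$, the inclusion $\cF_n\subas\sigma(\eta_k:k\geq n+1)$ follows directly from $\bbW_n=(f(U_{1:n}),\dots,f(U_{n:n}))$ without tracking the $Y_k$'s separately.
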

	In particular, we find that ergodic GEWPs whose laws are of the form $\GL(\cL(f(U),U))$ generate poly-adic filtrations of product-type. Since not all ergodic GEWPs are of this form, we need to continue our studies. 
	
	We proceed by considering EWPs, i.e. GEWPs in which the letters in $\bbW_n$ are iid for each $n$. It is easy to check that a GEWP over a Borel space $A$ is an EWP if and only if it is ergodic and the representing measure $\rho$ is of the form $\rho=\mu\otimes\unif[0,1]$ for some probability measure $\mu\in\cM_1(A)$. Note that $\law(W_{n,i})=\mu$ for all $1\leq i\leq n$ in this case. As long as $\mu$ is not a Dirac measure, EWPs can not be handled with Proposition~\ref{prop:filtrationgeneratedbyusual}. As we have already mentioned, the case of EWPs has been studied by Laurent \cite{laurent}, he obtained the following 
	\begin{theoremWITHOUT}[\cite{laurent}, Theorem 1]
		Every EWP $(\bbW,\bbeta)$ such that $(A,\mu)$ with $\mu=\cL(W_{1,1})$ is a Lebesgue probability space generates a backward filtration of product-type.
	\end{theoremWITHOUT}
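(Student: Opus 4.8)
The plan is to show that the backward filtration $\bbF=(\cF_n)_{n\geq 1}$ generated by the EWP is of product-type by verifying the coupling-based \emph{I-cosiness} criterion of \'Emery and Schachermayer, which for a kolmogorovian poly-adic backward filtration is equivalent to being product-type (for filtration-theoretic background see \cite{leuridan}). First I would realize the EWP concretely via Theorem~\ref{thm:main1}: since it is ergodic with representing measure $\rho=\mu\otimes\unif[0,1]$, I may take $(Y_j,U_j)_{j\geq 1}$ iid with $Y_j\sim\mu$ and $U_j\sim\unif[0,1]$ \emph{independent} of the $Y_j$, and set $\bbW_n=\ios(Y_1,\dots,Y_n,U_1,\dots,U_n)$ and $\eta_n=\#\{k\in[n]:U_k\leq U_n\}$. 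Two features are crucial: for each $n$ the letters of $\bbW_n$ are iid $\sim\mu$ (the EWP property), and $\bbF$ is $(n)$-adic with local innovations $\eta_{n+1}$. Ergodicity makes $\cF_\infty$ a.s. trivial, so Kolmogorov's zero--one law poses no obstruction.

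Next I would reduce the class of cosiness test observables. Writing $\cG_m:=\sigma(\bbW_m,\eta_2,\dots,\eta_m)$, the identity $\bbW_{m-1}=\erase(\bbW_m,\eta_m)$ gives $\cG_m\subseteq\cG_{m+1}$ and $\bigvee_m\cG_m\as\cF_1$, so by a monotone-class argument it suffices to establish the cosiness estimate for bounded $V=h(\bbW_m,\eta_2,\dots,\eta_m)$ with $m$ fixed and $h$ continuous in the $A^m$-variable. The Lebesgue-space hypothesis on $(A,\mu)$ enters here: via a measure isomorphism I may assume $A$ is a Borel subset of $[0,1]$ with the Euclidean metric, so that empirical measures $\frac1N\sum_j\delta_{Z_j}$ of iid $\mu$-samples converge to $\mu$ in Wasserstein distance almost surely; this is exactly what will let nearby letters be matched at vanishing cost.

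The coupling is the core of the argument. Fix $\epsilon>0$ and a large level $N>m$, and build two copies $(\bbW,\bbeta)$, $(\bbW',\bbeta')$ of the EWP on one probability space whose data for indices $>N$ are \emph{independent}; this makes $\cF_N$ and $\cF'_N$ independent, and in particular $\bbW_N$, $\bbW'_N$ two independent iid $\mu$-samples of size $N$. From level $N$ down to level $1$ I couple the erasers as a self-joining: (a) I force $\eta_i=\eta'_i$ for $2\leq i\leq m$, so both copies use the same erasure pattern in the last $m$ steps; (b) between levels $N$ and $m$ I steer which letters survive so that the surviving index-set in the second copy is the image of the first under the optimal matching between the empirical letter-measures $\frac1N\sum_j\delta_{W_{N,j}}$ and $\frac1N\sum_j\delta_{W'_{N,j}}$. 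Generating each erasure sequence from its conditional law given the prescribed surviving set keeps each copy a genuine EWP (each $\eta_n$ uniform on $[n]$ and independent of its own past), while the joint law is an admissible joining that is independent beyond level $N$.

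It then remains to push $\bE|V-V'|$ below $\epsilon$. By the law of large numbers for empirical measures on the transported Lebesgue space, the two empirical letter-measures converge to $\mu$ in Wasserstein distance as $N\to\infty$, so the optimal matching cost tends to $0$; combined with the exact agreement of $\eta_2,\dots,\eta_m$ and the continuity of $h$, this forces $\bbW_m$ and $\bbW'_m$ to be arbitrarily close, hence $\bE|V-V'|<\epsilon$ for $N$ large, verifying I-cosiness and therefore product-type. The step I expect to be the main obstacle is making the joining in (b) rigorous: one must realize the prescribed matching of surviving letters as an \emph{adapted} coupling of two sequences of exactly-uniform erasers, and control the interference between forcing $\eta_2,\dots,\eta_m$ to agree and steering which $m$ letters survive. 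This bookkeeping---rather than the limiting Wasserstein estimate, which is soft---is where the real work lies.
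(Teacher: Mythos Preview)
The paper does not give its own proof of this statement---it is quoted from \cite{laurent}, and the surrounding text only sketches Laurent's three-step strategy: construct generating innovations by hand for finite uniform alphabets, then lift to $([0,1],\text{Lebesgue})$ via dyadic partitions and immersion, then to general Lebesgue spaces by push-forward. The paper's Theorem~\ref{thm:main3} generalises this with the explicit construction $\eta^*_n=\pi_{\bbW_n}(\eta_n)$. Your I-cosiness route is a genuinely different and in principle legitimate alternative.

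However, the coupling you sketch has a conceptual gap more serious than the adaptedness bookkeeping you flag. The optimal-transport matching $\sigma$ between the two empirical letter-measures pairs \emph{values} well, $W_{N,j}\approx W'_{N,\sigma(j)}$, but in an EWP letters and positions are independent, so $\sigma$ is distributed like a uniform random permutation of $[N]$ and does not preserve position order. After erasing down to level $m$ with surviving sets $S_m$ and $\sigma(S_m)$, the word $\bbW_m$ lists the survivors of $\bbW_N$ in increasing \emph{position} order, and likewise $\bbW'_m$ lists the elements of $\sigma(S_m)$ in increasing position order; the outcome is that $\bbW'_m$ is close to a \emph{random reordering} of $\bbW_m$, not to $\bbW_m$ itself. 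For a test function such as $h(\bbW_m,\eta_2,\dots,\eta_m)=W_{m,1}$ you obtain no control on $|V-V'|$. (The adaptedness, by contrast, is actually fine: erasing the $\sigma$-image of a uniformly chosen surviving position does make $\eta'_n$ uniform on $[n]$ given $\cF_n\vee\cF'_n$.) Repairing the argument requires a matching that controls letter values and respects position order simultaneously---precisely what an explicit generating innovation achieves by separating ``which letter'' from ``rank among that letter's occurrences''. Without such a device the direct cosiness coupling does not go through.
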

	Laurent has proceeded his proof in three steps: First he considered finite alphabets $A$ and the case in which the letters of $\bbW_n$ are independent uniform over $A$. To prove that such EWPs generate product-type backward filtrations, he used a 'bare hands approach', i.e. he did not check an abstract criterion showing this property, but he constructed generating processes of local innovations using a limiting approach more or less explicitly. These constructions rely on the fact that $\bbW_n$ is uniform over $A^n$. The second step was to consider the case $A=[0,1]$ and the letters $W_{n,1},\dots,W_{n,n}$ being iid $\sim\mu$, where $\mu$ is the Lebesgue measure on $A$. By partitioning $[0,1]$ into equal length intervals he was able to reduce this to the first case. The last step was to consider arbitrary Lebesgue probability spaces $(A,\mu)$ and using that one can obtain $\mu$ as a push-forward of the Lebesgue measure under a measurable function $f:[0,1]\rightarrow A$. 
	
	Not all ergodic GEWPs are covered by Proposition~\ref{prop:filtrationgeneratedbyusual} and Laurent's result. We close the gap by showing 
	\begin{theorem}\label{thm:main3}
		Every ergodic GEWP over a Borel space $A$ generates a backward filtration of product-type.
	\end{theorem}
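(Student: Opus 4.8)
The plan is to reduce, via Theorem~\ref{thm:main1}, to the canonical ergodic GEWP $\GL(\rho)$ for a fixed $\rho\in\cC(A)$, and then to realize it inside a companion process whose letters are genuinely i.i.d., so that Laurent's theorem \cite{laurent} applies. Since being of product-type is a property of the law of $(\bbW,\bbeta)$, I am free to choose a convenient realization. As $A$ is Borel and $U\sim\unif[0,1]$, disintegrate $\rho=\law(Y,U)$ along $U$ to obtain a measurable $h:[0,1]^2\to A$ such that, for i.i.d.\ uniform $(U_j,V_j)_{j\ge1}$ on $[0,1]^2$, the pairs $(h(U_j,V_j),U_j)$ are i.i.d.\ $\sim\rho$. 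Setting $Y_j:=h(U_j,V_j)$, the word is $\bbW_n=\ios(Y_1,\dots,Y_n,U_1,\dots,U_n)$, whose $i$-th letter equals $h(U_{i:n},V_{[i]})$, where $V_{[i]}$ is the concomitant of $V$ attached to the $i$-th smallest of $U_1,\dots,U_n$.

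The companion process is the word-process $\bbW^V$ whose $i$-th letter is $V_{[i]}$, sorted by the \emph{same} $U$-values, together with the same eraser $\bbeta$. Because $V\perp U$, the concomitants $(V_{[1]},\dots,V_{[n]})$ are i.i.d.\ $\unif[0,1]$ and independent of the order statistics $(U_{1:n},\dots,U_{n:n})$; hence $\bbW^V$ is the word-process of the ergodic GEWP $\GL(\unif[0,1]\otimes\unif[0,1])$, an EWP over the Lebesgue probability space $([0,1],\mathrm{Leb})$. By Laurent's theorem the backward filtration $(\cG_n)_{n\ge1}$, $\cG_n:=\sigma(\bbW^V_k,\eta_{k+1}:k\ge n)$, is of product-type. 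I then check $\cF_n\subseteq\cG_n$: the eraser tail $\sigma(\eta_k:k>n)\subseteq\cG_n$ recovers the order statistics $(U_{1:k},\dots,U_{k:k})$ for every $k\ge n$, which is exactly the content of Proposition~\ref{prop:filtrationgeneratedbyusual} applied to $\rho=\law(\id(U),U)$, and $\cG_n$ also contains the $V_{[i]}$ through $\bbW^V_k$; applying $h$ letter-by-letter reconstructs each $\bbW_k$, so $\cF_n\subseteq\cG_n$ for all $n$.

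The decisive structural observation is that $\bbF=(\cF_n)_{n\ge1}$ and $(\cG_n)_{n\ge1}$ admit the \emph{same} eraser $\bbeta$ as a process of local innovations: $\eta_n\perp\cG_n$, $\eta_n\sim\unif[n]$, and both $\cF_{n-1}\as\sigma(\eta_n)\vee\cF_n$ and $\cG_{n-1}\as\sigma(\eta_n)\vee\cG_n$. From $\eta_n\perp\cG_n$, $\cF_n\subseteq\cG_n$, and these common updates one reads off that $\bbF$ is immersed in $(\cG_n)_{n\ge1}$. Since the GEWP is ergodic, $\cF_{\infty}$ is almost surely trivial, so $\bbF$ is kolmogorovian. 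It then remains to conclude that a kolmogorovian backward filtration immersed in a product-type filtration is itself of product-type.

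That last step is where I expect the main difficulty to lie, because a subfiltration of a product-type filtration is in general \emph{not} of product-type; the immersion hypothesis is essential and must be used in full. Two routes seem viable. The clean route is to invoke (or reprove in the backward poly-adic setting) the stability of Vershik standardness / I-cosiness under immersion, cf.\ \cite{laurent2}: restricting the cosy couplings that witness product-type of $(\cG_n)$ to the immersed $\bbF$ should witness I-cosiness of $\bbF$, which together with kolmogorovianity yields product-type. The hands-on route, closer to Laurent's original method, is to transport Laurent's generating innovations for $(\cG_n)$ down to $\bbF$ along the immersion and build generating innovations $(\eta^*_n)$ for $\bbF$ directly, using a limiting argument to achieve the interchange of $\bigcap$ and $\vee$ at $n=\infty$ that the bare eraser fails to achieve once $\rho\neq\law(f(U),U)$. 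Either way, verifying the immersion carefully and transferring product-type across it is the crux; the reductions in the first two paragraphs are routine once Theorem~\ref{thm:main1}, the concomitant computation, and Laurent's theorem are in hand.
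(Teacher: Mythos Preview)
Your approach is correct and genuinely different from the paper's. The paper proceeds bottom-up: it first handles finite alphabets by explicitly constructing a generating process of local innovations $\bbeta^*$ via a letter-sorting permutation $\pi_{\bbW_n}$ (Proposition~\ref{prop:keyprop}), then lifts to $A=[0,1]$ by dyadic discretizations $f_m$, using that $\bbF$ is the increasing union of the immersed product-type subfiltrations $\bbF^{f_m}$ (Lemma~\ref{lemma:interval}, via Proposition~1 of \cite{laurent}), and finally reduces general Borel $A$ to $[0,1]$ by Borel isomorphism. You instead go top-down: noise-outsourcing $\cL(Y\mid U)$ through an auxiliary uniform $V$ embeds the given ergodic GEWP as an immersed kolmogorovian subfiltration of a single EWP over $([0,1],\mathrm{Leb})$, to which Laurent's theorem applies directly. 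Both arguments ultimately hinge on an immersion fact from filtration theory, but in opposite directions: the paper uses ``increasing union of immersed product-type subfiltrations is product-type'', while you need ``kolmogorovian filtration immersed in a product-type filtration is product-type''. The latter is indeed a theorem---it follows from the I-cosiness characterization of standardness in \cite{laurent2,leuridan}, exactly along the lines of your ``clean route''---so the step you flag as the crux is resolvable by citation rather than new work. What your route buys is economy: you avoid redoing the finite-alphabet case and use Laurent's result as a black box. What the paper's route buys is an explicit generating innovation for finite $A$, which yields the extra structural observation that $(\bbY,L,L^*)$ is an exchangeable graph joining (the remark after Proposition~\ref{prop:keyprop}). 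One minor point: your inclusion $\cF_n\subseteq\cG_n$ should read $\cF_n\subas\cG_n$, since recovering the $U$-order statistics from the eraser tail (Proposition~\ref{prop:eta}) is only almost sure.
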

	The proof we present also proceeds in three steps: First we consider finite alphabets, then we consider $A=[0,1]$ and finally arbitrary Borel spaces. Like with Laurent, the main effort lies in the first step. For any ergodic GEWP $(\bbW,\bbeta)$ over a finite alphabet $A$ we will explicitly construct a generating processes $\bbeta^*$. Our construction involves the representing measure $\rho\in\cC(A)$, hence relies on Theorem~\ref{thm:main1}. Unlike the construction in \cite{laurent} we are able to express each $\bbW_n$ as a function of $(\eta^*_{n+1},\eta^*_{n+2},\dots)$ almost surely, i.e. our argument that the process $\bbeta^*$ is generating $\bbF$ is highly explicit. Our construction also sheds some light on the exchangeability point of view as we will see that our construction leads to triples $(\bbW,\bbeta,\bbeta^*)$ that correspond to certain jointly exchangeable triples $(\bbY,L,L^*)$, where $\bbY$ is a $A$-valued stochastic process and $L, L^*$ are random linear orders. The second and third step of the proof are very similar to~\cite{laurent}.


	\subsection{Outlook} One can introduce all kinds of erased-type processes and investigate the same topics as we have presented here in the case of words. As an example, we introduce erased-graph processes and finish with an open question concerning the backward filtrations these processes generate. Let $G$ be a (simple) graph on the node set $[n]$ and let $i\in[n]$. Consider the following procedure to obtain a graph on node set $[n-1]$: First, remove node $i$ from $G$ together with all adjacent edges. The result of this is a graph with node set $[n]\setminus\{i\}$. Now decrease the label of each node $j>i$ by one, i.e. $i+1\mapsto i, i+2\mapsto i+1, \dots, n\mapsto n-1$. The resulting graph has node set $[n-1]$, we denote this graph by $\erase(G,i)$. We omit 'general' in the following definition: 
	\begin{definition}\label{def:egp}
		An \emph{erased-graph process} (EGP) is a stochastic process $(\bbG,\bbeta)=(G_n,\eta_n)_{n\geq 1}$ such that for each $n$
		\begin{enumerate}
			\item[(i)] $G_n$ is a random graph on node set $[n]$,
			\item[(ii)] $\eta_n$ is uniform on $[n]$ and independent of the $\sigma$-field $\cF_n:=\sigma(G_k,\eta_{k+1}:k\geq n)$,
			\item[(iii)] $G_n=\erase(G_{n+1},\eta_{n+1})$ almost surely.
		\end{enumerate}
	\end{definition}
	One can investigate the representation, Martin boundary and filtration theory of EGPs like we did for GEWPs, the connections between these theories remain valid. Analogously to GEWPs one obtains that EGPs are in one-to-one correspondence with jointly exchangeable pairs $(G_{\infty},L)$ where $G_{\infty}$ is a random graph on node set $\bN$ and $L$ is a random linear order on $\bN$. A bijective description of ergodic exchangeable pairs $(G_{\infty},L)$ is hard to obtain, as this is already the case for exchangeable random graphs $G_{\infty}$. See \cite{austin, diaconisjanson} for material about the connection of exchangeable random graphs and graph limits. We finish with the following question:
	\begin{question}
		Are backward filtrations generated by ergodic EGPs of product-type? 
	\end{question}
	This is certainly true for ergodic EGPs that can be constructed from 'random-free graphons' (compare to Proposition~\ref{prop:filtrationgeneratedbyusual}), but we do not know if it is true for every ergodic EGP.
	
	\begin{remark}
		\cite{egw, ge} studied erased-type processes int the context of binary trees, Schröder trees and so-called interval-systems. \cite{ge} shows that ergodic erased-type processes generated poly-adic filtrations of product-type in this situation. 
	\end{remark}

	\subsection{Acknowledgments} The author would like to thank his PhD supervisor
	Rudolf Grübel for countless interesting discussions during the last years and for many very
	helpful comments concerning this paper and also Ludwig Baringhaus for pointing us to the concept of induced order statistics. The author would also like to thank two anonymous referees for their advises and encouragements that led to a substantial improvement both in terms of presentation and content of the material.

	\section{Connection to Exchangeability}
	
	We give a short introduction to the basic concepts of exchangeability theory, we refer the reader to \cite{kall}, Theorem A1.4 and \cite{dynkin} for details.
	
	Exchangeability theory is about the study of probability measures that are invariant with respect to the action of a permutation group. Let $\bS_{\infty}$ be the group of all permutations of $\bN$ that are finite in the sense that for each $\pi\in\bS_{\infty}$ one has $\pi(i)=i$ for all but finitely many $i$. Let $S$ be a Borel space and $\bS_{\infty}\times S\rightarrow S, (\pi,x)\mapsto x^{\pi}$ be some measurable group action from $\bS_{\infty}$ on $S$, i.e. it holds that $x^{\pi\circ\sigma}=(x^{\sigma})^{\pi}, x^{\id}=x$ and the map $x\mapsto x^{\pi}$ is measurable. A $S$-valued random variables $X$ is called \emph{exchangeable} if $\law(X)=\law(X^{\pi})$ for all $\pi\in\bS_{\infty}$. Let 
	\begin{equation*}
		\cME(S):=\{\law(X):~\text{$X$ is $S$-valued exchangeable}\}\subseteq \cM_1(S)
	\end{equation*}
	be the space of exchangeable laws. $\cME(S)$ is a simplex due to the following famous decomposition theorem: Let $\cI$ be the $\sigma$-field of $\bS_{\infty}$-invariant events, i.e. all measurable $B\subseteq S$ such that $B^{\pi}=\{x^{\pi}:x\in B\}=B$ for each $\pi\in\bS_{\infty}$. An exchangeable $X$ is called \emph{ergodic} if $\bP(X\in B)\in\{0,1\}$ for all $B\in\cI$. We denote by $\erg\cME(S)\subseteq\cME(S)$ the set of ergodic exchangeable laws. The extreme points of $\cME(S)$ are precisely given by $\erg\cME(S)$ and for each exchangeable $X$ the conditional law $\law(X|X^{-1}(\cI))$ is almost surely ergodic. With $\alpha:=\law(\law(X|X^{-1}(\cI)))$ one obtains the unique representation  $\law(X)=\int_{\erg\cME(S)}Qd\alpha(Q)$ (in the language of \cite{dynkin}, $\cI$ is $H$-sufficient for $\cME(S)$). We show below that the space $\cM(A)$ of laws of GEWPs over some Borel alphabet $A$ is affinely isomorphic to a space of the form $\cME(S)$ and by that decomposition theorems of $\cME(S)$ transfer to $\cM(A)$. 
	
	In a concrete case one is interested in finding a description, i.e. a parametrization, of $\erg\cME(S)$. We consider three cases: sequences (exchangeable processes), linear orders and pairs of sequences and linear orders. 
	
	\subsection{Exchangeable Processes}
	
	We consider a Borel space $A, S=A^{\bN}$ and the group action $\bS_{\infty}\times A^{\bN}\rightarrow A^{\bN}$ given by 
	\begin{equation*}
	(\pi,\bx)\in \bS_{\infty}\times A^{\bN}~\mapsto~\bx^{\pi}:=(x_{\pi^{-1}(k)})_{k\geq 1}.
	\end{equation*}
	A $A$-valued stochastic process $\bbX=(X_k)_{k\geq 1}$ is called exchangeable if $\cL(\bbX^{\pi})=\cL(\bbX)$ for $\pi\in\bS_{\infty}$. \emph{De Finetti's Theorem} describes the simplex structure of $\cME(A^{\bN})$: \emph{a Borel space-valued stochastic process is exchangeable iff it is mixed iid}. In particular, the ergodic exchangeable processes are precisely the iid processes, hence if we write $\rho^{\otimes \bN}$ for the law of an iid sequence with marginal $\rho$, one has 
	$\erg\cME(A^{\bN})=\{\rho^{\otimes\bN}:\rho\in\cM_1(A)\}$. We state an equivalent formulation involving random directing measures:
	\begin{theoremWITHOUT}[De Finetti, \cite{kall}]
		$\bbX=(X_j)_{j\geq 1}$ is exchangeable if and only if there exists a random probability measure $\Xi$ on $A$, i.e. a $\cM_1(A)$-valued random variable, such that 
		\begin{equation}\label{eq:definettias}
		\law(\bbX~|\Xi)=\Xi^{\otimes \bN}~~\text{almost surely}.
		\end{equation}
		$\Xi$ is called the random directing measure of $\bbX$ and is almost surely unique. For each event $B\subseteq A$ is holds that
		\begin{equation*}
			\Xi(B)=\lim\limits_{n\rightarrow\infty}\frac{1}{n}\sum_{j=1}^n1(X_j\in B)~~\text{almost surely}.
		\end{equation*}
		If $A$ is a polish space one has
		\begin{equation*}
		\Xi=\lim\limits_{n\rightarrow\infty}\frac{1}{n}\sum_{j=1}^n\delta_{X_j}~~\text{almost surely weakly.}
		\end{equation*}
	\end{theoremWITHOUT}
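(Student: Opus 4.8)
The reverse implication is the routine half: if $\law(\bbX\mid\Xi)=\Xi^{\otimes\bN}$ almost surely, then conditionally on $\Xi$ the sequence $\bbX$ is iid and hence exchangeable, and permuting the coordinates of an iid sequence preserves its conditional law; integrating against $\law(\Xi)$ gives $\law(\bbX^{\pi})=\law(\bbX)$ for every $\pi\in\bS_{\infty}$. I would dispose of this direction immediately and concentrate on the forward one.

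For the forward direction I would follow the reverse-martingale route. For each $n$ let $\cS_n$ denote the sub-$\sigma$-field of events depending on $\bbX$ only through functions invariant under permutations of the first $n$ coordinates; the $\cS_n$ form a decreasing sequence whose intersection $\cE:=\bigcap_n\cS_n$ is the exchangeable $\sigma$-field $\bbX^{-1}(\cI)$. Conditionally on $\cS_n$ the vector $(X_1,\dots,X_n)$ is exchangeable, so for bounded measurable $f$ all the $\bE[f(X_j)\mid\cS_n]$, $j\le n$, agree; since $\frac1n\sum_{j\le n}f(X_j)$ is $\cS_n$-measurable this forces
\begin{equation*}
	\bE[f(X_1)\mid\cS_n]=\frac1n\sum_{j=1}^n f(X_j).
\end{equation*}
Thus $\big(\frac1n\sum_{j\le n}f(X_j)\big)_n$ is a reverse martingale for $(\cS_n)_n$, and the reverse martingale convergence theorem gives almost sure and $L^1$ convergence to $\bE[f(X_1)\mid\cE]$. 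Choosing $f=1_B$ yields the limit formula in the statement, and because $A$ is Borel the regular conditional distribution $\Xi:=\law(X_1\mid\cE)$ exists, is almost surely unique, and is an $\cM_1(A)$-valued random variable with $\Xi(B)=\lim_n\frac1n\sum_{j\le n}1(X_j\in B)$ almost surely.

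The crux is to promote this one-coordinate identity to the full conditional iid statement $\law(\bbX\mid\cE)=\Xi^{\otimes\bN}$. For bounded measurable $f_1,\dots,f_k$, conditional exchangeability on $\cS_n$ rewrites $\bE[\prod_{i\le k}f_i(X_i)\mid\cS_n]$ as the sampling-\emph{without}-replacement average of $\prod_{i\le k}f_i(X_{j_i})$ over the $n(n-1)\cdots(n-k+1)$ ordered tuples of distinct indices in $[n]$. I would then compare this with the sampling-\emph{with}-replacement average $\prod_{i\le k}\big(\frac1n\sum_{j\le n}f_i(X_j)\big)$: the two agree except on the $O(n^{k-1})$ tuples with a repeated index and differ by the renormalization from $n(n-1)\cdots(n-k+1)$ to $n^k$, so for bounded $f_i$ their difference is $O(1/n)$. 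Letting $n\to\infty$, the left-hand side converges to $\bE[\prod_{i\le k}f_i(X_i)\mid\cE]$ by reverse martingale convergence, while the with-replacement average converges almost surely to $\prod_{i\le k}\int f_i\,d\Xi$ by the one-coordinate result; equating the limits gives
\begin{equation*}
	\bE\Big[\prod_{i=1}^k f_i(X_i)\,\Big|\,\cE\Big]=\prod_{i=1}^k\int_A f_i\,d\Xi\quad\text{almost surely.}
\end{equation*}
A monotone-class argument extends this from product cylinders to all of $\sigma(\bbX)$, yielding $\law(\bbX\mid\cE)=\Xi^{\otimes\bN}$ and hence the representation with directing measure $\Xi$; uniqueness is inherited from the almost sure limit formula for $\Xi(B)$.

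I expect the combinatorial upgrade in the previous paragraph to be the only real obstacle: extracting the exact without-replacement representation from exchangeability and controlling the discrepancy between with- and without-replacement averages uniformly enough to survive the passage to the limit. The polish-space refinement is then soft. Fixing a countable convergence-determining family $(g_m)_m$ of bounded continuous functions, applying the almost sure convergence of $\frac1n\sum_{j\le n}g_m(X_j)$ to $\int g_m\,d\Xi$ simultaneously for all $m$ on one almost sure event, and using that weak convergence on a polish space is tested by such a family, I would conclude $\frac1n\sum_{j\le n}\delta_{X_j}\to\Xi$ weakly almost surely.
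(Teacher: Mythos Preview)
The paper does not prove this statement at all: it is stated as the classical de~Finetti theorem with a citation to Kallenberg~\cite{kall}, and the paper simply uses it as a black box in Section~3. There is therefore no ``paper's own proof'' to compare against.

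That said, your argument is a correct and standard proof of de~Finetti's theorem via reverse martingales and the exchangeable $\sigma$-field, essentially the route one finds in Kallenberg. The identification $\bE[f(X_1)\mid\cS_n]=\frac1n\sum_{j\le n}f(X_j)$ is the right engine, and your without-/with-replacement comparison to lift to the product form $\prod_i\int f_i\,d\Xi$ is the usual way to handle the multi-coordinate step; the $O(1/n)$ bound for the discrepancy is legitimate since the $f_i$ are bounded. One small point worth making explicit is that your $\cS_n$ must include the tail variables $X_{n+1},X_{n+2},\dots$ (i.e.\ $\cS_n$ is the $\sigma$-field of events invariant under permutations of the first $n$ coordinates, with no restriction on the remaining ones); this is implicit in your phrasing but is what makes the sequence $(\cS_n)_n$ decreasing and makes the conditional exchangeability step go through. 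The polish-space upgrade via a countable convergence-determining family is fine.
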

	Taking expectations in (\ref{eq:definettias}) yields the formulation 'exchangeable iff mixed iid'. Later we work with product spaces of the form $A=A_1\times A_2$, where both $A_1$ and $A_2$ are Borel. We will need the following
	\begin{lemma}\label{lemma:productspacedefinetti}
		Let $\bbX=(\bbY,\bbZ)=(Y_j,Z_j)_{j\geq 1}$ be an $A_1\times A_2$-valued exchangeable process with random directing measure $\Xi$.
		\begin{enumerate}
			\item[(i)] $\law(\bbZ)=\mu^{\otimes\bN}$ for some $\mu\in\cM_1(A_2)$ if and only if $\Xi(A_1\times\cdot)=\mu(\cdot)$ almost surely. 
			\item[(ii)] For each $\mu\in\cM_1(A_2)$ the space $\{\law(\bbX):\bbX~\text{is exchangeable and}~\law(\bbZ)=\mu^{\otimes\bN}\}$ is a simplex and $\law(\bbX)$ is an extreme point iff $\bbX$ is iid and $\law(\bbZ)=\mu^{\otimes\bN}$.
		\end{enumerate}
	\end{lemma}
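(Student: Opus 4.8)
The plan is to read off both parts from de Finetti's theorem by projecting onto the second coordinate. For (i) I would first observe that the random directing measure of the second-coordinate process $\bbZ$ is exactly the marginal $\Xi(A_1\times\cdot)$: for every measurable $B\subseteq A_2$, de Finetti gives
\[
\Xi(A_1\times B)=\lim_{n\to\infty}\frac1n\sum_{j=1}^n 1(X_j\in A_1\times B)=\lim_{n\to\infty}\frac1n\sum_{j=1}^n 1(Z_j\in B)\quad\text{a.s.},
\]
and the right-hand side is the value at $B$ of the directing measure of $\bbZ$ (since $A_2$ is Borel, agreement on a countable generating algebra upgrades to agreement as measures). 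Now $\law(\bbZ)=\mu^{\otimes\bN}$ says $\bbZ$ is iid$\sim\mu$, equivalently that $\bbZ$ has the deterministic directing measure $\mu$; by the a.s. uniqueness of directing measures this is the same as $\Xi(A_1\times\cdot)=\mu(\cdot)$ a.s., giving (i).

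For (ii) I would write $\cME_\mu$ for the set in question and put $\cM_1^\mu:=\{\rho\in\cM_1(A_1\times A_2):\rho(A_1\times\cdot)=\mu(\cdot)\}$, a measurable subset of $\cM_1(A_1\times A_2)$ since it is cut out by the countably many conditions $\rho(A_1\times B)=\mu(B)$ over a countable generating algebra of $A_2$. De Finetti's theorem yields the affine bijection
\[
\cME(A_1\times A_2)\longrightarrow\cM_1(\cM_1(A_1\times A_2)),\qquad \law(\bbX)\mapsto\law(\Xi),
\]
with inverse $\alpha\mapsto\int\rho^{\otimes\bN}\,d\alpha(\rho)$, sending each extreme point $\rho^{\otimes\bN}$ to the Dirac $\delta_\rho$. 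By part (i) the constraint $\law(\bbZ)=\mu^{\otimes\bN}$ is equivalent to $\bP(\Xi\in\cM_1^\mu)=1$, so this bijection restricts to an affine bijection between $\cME_\mu$ and $\cM_1(\cM_1^\mu)$. The latter is the universal simplex over the Borel space $\cM_1^\mu$, whose extreme points are exactly the Diracs $\delta_\rho$, $\rho\in\cM_1^\mu$; since the simplex property is preserved by measurable affine bijections (the fact the paper uses for $\cM'(A)$), $\cME_\mu$ is a simplex whose extreme points are the iid laws $\rho^{\otimes\bN}$ with $\rho\in\cM_1^\mu$, which by (i) are precisely the laws of iid $\bbX$ with $\law(\bbZ)=\mu^{\otimes\bN}$.

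The main obstacle I expect is not conceptual but the careful bookkeeping in (ii): confirming that the de Finetti map is a genuine measurable affine bijection onto all of $\cME(A_1\times A_2)$, that $\cM_1^\mu$ is measurable, and that the restriction is onto $\cME_\mu$. A cleaner route that avoids the universal-simplex step would be to show directly that $\cME_\mu$ is a \emph{face} of the simplex $\cME(A_1\times A_2)$: the second-marginal map $\law(\bbX)\mapsto\law(\bbZ)$ is affine and $\mu^{\otimes\bN}$ is an extreme point of $\cME(A_2^{\bN})$, so in any representation of a law of $\cME_\mu$ as a mixture within $\cME(A_1\times A_2)$ each component must again lie in $\cME_\mu$; a \emph{face} of a simplex is again a simplex whose extreme points are those extreme points of the ambient simplex that lie in the face.
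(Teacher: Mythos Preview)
Your proposal is correct and follows essentially the same route as the paper: for (i) you identify the directing measure of $\bbZ$ as the second marginal $\Xi(A_1\times\cdot)$ and invoke uniqueness, and for (ii) you use (i) to see that the constraint $\law(\bbZ)=\mu^{\otimes\bN}$ amounts to the mixing measure being supported on $\cM_1^\mu$, so the set in question is affinely isomorphic to $\cM_1(\cM_1^\mu)$. The paper's proof is much terser (it stops after observing that $\cK$ consists of mixtures over $\cM_1^\mu$ and treats the simplex conclusion as self-evident), whereas you spell out the measurability of $\cM_1^\mu$, the affine bijection, and the transfer of the simplex property; your alternative face argument is a pleasant bonus but not needed.
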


	\begin{proof}
		If $\bbX=(\bbY,\bbZ)$ is exchangeable with random directing measure $\Xi(\cdot)$, then $\bbZ$ is exchangeable with random directing measure $\Xi(A_1\times\cdot)$, from which (i) follows directly. Now let $\mu\in\cM_1(A_2)$ and consider the 
		set $\cK:=\{\law(\bbX):\bbX~\text{is exchangeable and $\cL(\bbZ)=\mu^{\otimes\bN}$}\}$. This set is clearly convex. (i) yields that $\law(\bbX)\in\cK$ iff $\law(\bbX)$ is mixed iid where the mixture is only over those marginals $\rho\in\cM_1(A_1\times A_2)$ in which the second marginal is $\mu$. 
	\end{proof}

	\subsection{Exchangeable Linear Orders}\label{subsection:linord}
	For every GEWP $(\bbW,\bbeta)$ the eraser-process $\bbeta=(\eta_n)_{n\geq 1}$ always has the same distribution: $\eta_1,\eta_2,\dots$ are independent and $\eta_n\sim\unif[n]$ for each $n$. In this subsection we introduce the exchangeable random linear order on $\bN$ and explain that eraser-processes and exchangeable linear orders are in some sense equivalent. The material we present here seems to be folklore, but we are not aware of any references presenting the material in a closed form. What we present here is important for all three parts of our studies. From an exchangeability point of view we now consider the case $S=\bL$, where $\bL$ is the set of linear orders on $\bN$, where linear order is defined as follows:
	
	Given a binary relation $l\subseteq E\times E$ on a set $E$ we write $xly$ instead of $(x,y)\in l$. A (strict) linear order on $E$ is a binary relation $l$ that is transitive (for all $x,y,z\in E$ it holds that $xly\wedge ylz~\Rightarrow~xlz$) and trichotomous (for all $x,y\in E$ excatly one of the three statements $xly$ or $ylx$ or $x=y$ is true). We write $\bL$ for the set of linear orders on $\bN$ and $\bL_n$ for the finite set of linear orders on $[n]$, $<\in\bL$ denotes the usual linear order on $\bN$, i.e. $1<2<3<\dots$. For $n\in\bN$ and $l\in\bL$ we denote by $l_{|n}\in\bL_n$ the restriction of $l$ to the set $[n]$. We endow $\bL$ with the $\sigma$-field generated by the projections $l\mapsto l_{|n}, n\in\bN$. With this $\bL$ becomes a Borel space. One can see this by noting that $d(l,l'):=\sum_n2^{-n}1(l_{|n}\neq l'_{|n})$ defines a metric on $\bL$ that turns $(\bL,d)$ into a compact metric space and that the associated Borel $\sigma$-field equals the $\sigma$-field generated by the restriction maps $l\mapsto l_{|n}$. The law $\law(L)$ of a $\bL$-valued random variable $L$ is determined by the sequence $(\law(L_{|n}))_{n\geq 1}$. Given some $l\in\bL$ and $\pi\in\bS_{\infty}$ we define a linear order $l^{\pi}$ by 
	$$i~l^{\pi}~j~~:\Longleftrightarrow~~\pi^{-1}(i)~l~\pi^{-1}(j)~~~\text{for all}~i,j\in\bN.$$
	The map $\bS_{\infty}\times\bL\rightarrow\bL, (\pi,l)\mapsto l^{\pi}$ yields a measurable group action. The representation result for exchangeable linear orders reads as follows:
	\begin{proposition}\label{thm:exchlin}
		A random linear order $L$ on $\bN$ is exchangeable, i.e. $\cL(L^{\pi})=\cL(L)$ for each $\pi\in\bS_{\infty}$, if and only if for each $n$ the restriction $L_{|n}$ is uniformly distributed on $\bL_n$. In particular, the law of an exchangeable linear order is unique and $\erg\cME(\bL)=\cME(L)$.
	\end{proposition}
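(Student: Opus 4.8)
The plan is to reduce the whole statement to the finite level, exploiting that the law of $L$ is determined by the sequence of restriction laws $(\cL(L_{|n}))_{n\geq 1}$ and that the $\bS_\infty$-action on $\bL$ is compatible with restriction once $n$ is large enough. The first ingredient I would record is this compatibility identity: given $\pi\in\bS_\infty$ choose $N$ so large that $\pi(i)=i$ for all $i>N$; then for every $n\geq N$ the map $\pi$ sends $[n]$ bijectively onto $[n]$, and a direct check from the defining equivalence $i\,l^\pi\,j\Leftrightarrow\pi^{-1}(i)\,l\,\pi^{-1}(j)$ gives $(L^\pi)_{|n}=(L_{|n})^{\pi_n}$, where $\pi_n:=\pi|_{[n]}\in\bS_n$. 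Thus the $\bS_\infty$-action descends, level by level, to the natural $\bS_n$-action on the finite set $\bL_n$.

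The second ingredient is purely finite-combinatorial: for each $n$ the action of $\bS_n$ on $\bL_n$ is transitive. Indeed, writing $l$ as $a_1\,l\,a_2\,l\cdots l\,a_n$ and $l'$ as $b_1\,l'\,b_2\,l'\cdots l'\,b_n$, the permutation $\sigma$ with $\sigma(a_k)=b_k$ satisfies $l^\sigma=l'$. Since $\bS_n$ is finite, each $\sigma\mapsto(\,\cdot\,)^\sigma$ permutes the finite set $\bL_n$, so the uniform law $\unif(\bL_n)$ is invariant under every $\sigma$; by transitivity it is moreover the \emph{unique} $\bS_n$-invariant probability measure on $\bL_n$.

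With these two facts the equivalence follows immediately. For ``uniform $\Rightarrow$ exchangeable'', fix $\pi$ and $N$ as above: for $n\geq N$ the compatibility identity and invariance give $\cL((L^\pi)_{|n})=\cL((L_{|n})^{\pi_n})=\unif(\bL_n)=\cL(L_{|n})$, while for $n<N$ one restricts the equal laws at level $N$ down to $[n]$, using that the restriction of $\unif(\bL_N)$ is $\unif(\bL_n)$ because the restriction map $\bL_N\to\bL_n$ has equal fibre sizes. Hence all restriction laws of $L^\pi$ and $L$ agree, so $\cL(L^\pi)=\cL(L)$. For ``exchangeable $\Rightarrow$ uniform'', fix $n$, take any $\sigma\in\bS_n$ and extend it to $\pi\in\bS_\infty$ fixing $\bN\setminus[n]$; then $\cL(L)=\cL(L^\pi)$ restricted to $[n]$ yields $\cL(L_{|n})=\cL((L_{|n})^\sigma)$. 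As $\sigma$ ranges over $\bS_n$ this says $\cL(L_{|n})$ is $\bS_n$-invariant, hence equal to $\unif(\bL_n)$ by the uniqueness just noted.

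Finally, uniqueness and $\erg\cME(\bL)=\cME(\bL)$ drop out: the equivalence forces $\cL(L_{|n})=\unif(\bL_n)$ for every $n$ whenever $L$ is exchangeable, and since these restriction laws determine $\cL(L)$, the exchangeable law is unique, i.e. $\cME(\bL)$ is a single point; a one-point convex set coincides with its set of extreme points, giving $\erg\cME(\bL)=\cME(\bL)$. I expect the only step demanding any care is the compatibility identity $(L^\pi)_{|n}=(L_{|n})^{\pi_n}$ together with the observation that a finite permutation eventually stabilises each initial segment $[n]$; everything else is a routine transfer between the $\bS_\infty$- and $\bS_n$-levels.
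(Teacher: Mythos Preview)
Your proof is correct and follows essentially the same approach as the paper's: both rely on the transitivity of the $\bS_n$-action on $\bL_n$ (giving that the uniform law is the unique invariant one) together with the compatibility identity $(l_{|n})^{\pi}=(l^{\tilde\pi})_{|n}$ linking the $\bS_n$- and $\bS_\infty$-actions, and then use that $\cL(L)$ is determined by the restriction laws. Your write-up is simply more explicit about both implications and about the $n<N$ case in the ``uniform $\Rightarrow$ exchangeable'' direction, whereas the paper's version compresses these into a single sentence.
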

	\begin{proof}
		The action of $\bS_n$ to $\bL_n$ defined by $il^{\pi}j:\Leftrightarrow\pi^{-1}(i)l\pi^{-1}(j)$ is transitive and hence any $\bL_n$-valued RV is exchangeable (w.r.t. $\bS_n$) iff it is uniformly distributed. Now let $\pi\in\bS_n$ and define $\tilde\pi\in\bS_{\infty}$ by $\tilde\pi(i):=i$ for $i>n$ and $\tilde\pi(i):=\pi(i)$ for $i\in[n]$. For each $l\in\bL$ it holds that $(l_{|n})^{\pi}=(l^{\tilde\pi})_{|n}$. Since $\law(L)$ is determined by $(\law(L_{|n}))_{n\geq 1}$ this yields the result.
	\end{proof}
	
	Representation results for exchangeable random objects are often stated in a form involving a stochastic processes $\bbU=(U_j)_{j\geq 1}$ in which $U_j$ are iid $\unif[0,1]$-distributed. We call such a processes a $U$-process. We explain that $U$-processes are basically equivalent to exchangeable linear orders: Given a $U$-process $\bbU=(U_j)_{j\geq 1}$ we define a random linear order by
	\begin{equation*}
		i~L~j~~:\Leftrightarrow~~U_i<U_j~~~\text{for all}~i,j\in\bN.
	\end{equation*}
	Since $\bbU$ is exchangeable, so is $L$. One can recover $\bbU$ from $L$ almost surely because
	\begin{equation*}
		U_i=\lim\limits_{n\rightarrow\infty}\frac{1}{n}\#\big\{k\in[n]:k~L~i\big\}
	\end{equation*}
	holds a.s. for each $i$ be the strong law of large numbers and hence $\sigma(\bbU)\as\sigma(L)$. We call $\bbU$ the $U$-process corresponding to $L$ and vice versa. There are two more stochastic objects that are equivalent to an exchangeable linear order (or $\bbU$-process) in this way: For each $n$ define 
	\begin{equation*}
		S_n:=\ps(U_1,\dots,U_n) ~~\text{(permutation statistics)},
	\end{equation*}
	i.e. $S_n$ is the unique random permutation of $[n]$ with $U_{S_n(1)}<\dots<U_{S_n(n)}$. Equivalently, if $L$ is the linear order corresponding to $\bbU$, then $S_n$ is the unique permutation of $[n]$ such that
	\begin{equation*}
		S_n(1)~L~S_n(2)~L~\dots~L~S_n(n).
	\end{equation*}
	For each $n$ $S_n$ contains the same information as $L_{|n}$. The distribution of the process $\bbS=(S_n)_{n\geq 1}$ is determined by the two properties
	\begin{enumerate}
		\item[(i)] $S_n$ is uniform on $\bS_n$ for each $n$,
		\item[(ii)] The one-line-notation of $S_n$ is obtained by erasing '$n+1$' from the one-line-notation of $S_{n+1}$ for each $n$, in symbols:
		\begin{equation*}
			\big(S_n(1),\dots,S_n(n)\big)=\erase\Big(\big(S_{n+1}(1),\dots,S_{n+1}(n+1)\big),S_{n+1}^{-1}(n+1)\Big).
		\end{equation*}
	\end{enumerate}
	We call a stochastic process $\bbS$ that fulfills (i) and (ii) a $S$-process. Note that $S_k=\ps(S_n^{-1}(1),\dots,S_n^{-1}(k))$ for each $k\leq n$.

	If $\bbS$ has been constructed from an $U$-process as above, one can recover $\bbU$ (and hence $L$) almost surely, since $S_n^{-1}(i)=1+\#\{k\in[n]:kLi\}$ and hence
	\begin{equation*}
		U_i = \lim\limits_{n\rightarrow\infty}\frac{1}{n}S_n^{-1}(i)
	\end{equation*}
	almost surely for each $i$. In particular, $\sigma(\bbU)\as\sigma(L)\as\sigma(\bbS)$. We call $\bbS$ the $S$-process corresponding to $L/\bbU$ and vice versa.

	As already noted at the beginning, the fourth object 'equivalent' to an exchangeable linear order is an eraser-process. If $(\bbU,L,\bbS)$ is a corresponding triple like before, define
	\begin{equation*}
		\eta_n:=\#\{k\in[n]:U_k\leq U_n\}=1+\#\{k\in[n]:kLn\}=S_n^{-1}(n). 
	\end{equation*}

	One can recover $\bbS=(S_n)_{n\geq 1}$ (and hece $\bbU$ and $L$) from $\bbeta$ by the following inductive procedure
	\begin{enumerate}
		\item $S_1:=(1)\in\bS_1$.
		\item The one-line-notation of $S_{n+1}$ is obtained by inserting '$n+1$' in the $\eta_{n+1}$-th slot in the one-line-notation of $S_n=(\square S_n(1)\square S_n(2)\square\dots \square S_n(n)\square)$.
	\end{enumerate}
	In particular, $(\eta_1,\dots,\eta_n)$ contains the same information as $S_n$. We call $\bbeta$ the eraser-process corresponding to $L/\bbU/\bbS$ and vice versa. So given any of the four objects under consideration, i.e. exchangeable linear order, $U$-process, $S$-process or eraser process, there are almost surely uniquely defined corresponding objects of the other three types given by the constructions presented above and all objects contain the same probabilistic information, i.e. $\sigma(\bbU)\as\sigma(L)\as\sigma(\bbS)\as\sigma(\bbeta)$ holds almost surely. 
	
	The following proposition will be used when studying the filtrations generated be GEWPs in Section~4 and is worth knowing when dealing with $(n)$-adic filtrations in general:
	\begin{proposition}\label{prop:eta}
		Let $\bbeta=(\eta_n)_{n\geq 1}$ be an eraser-process and let $\bbU=(U_j)_{j\geq 1}$ be the $U$-processes corresponding to $\bbeta$. Then for each $n$
		\begin{equation*}
			\sigma(\eta_k:k\geq n+1)\as\sigma(U_{1:n},\dots,U_{n:n})\vee\sigma(U_k:k\geq n+1).
		\end{equation*}
	\end{proposition}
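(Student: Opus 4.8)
The plan is to establish the two almost-sure inclusions separately, using throughout that $\bbU$ has no ties almost surely, so that $\eta_k$ equals the rank of $U_k$ among $U_1,\dots,U_k$, i.e. the position $U_k$ occupies when $U_1,\dots,U_k$ are sorted increasingly.

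For the inclusion $\sigma(\eta_k:k\ge n+1)\subas\sigma(U_{1:n},\dots,U_{n:n})\vee\sigma(U_k:k\ge n+1)$ I would simply decompose each rank. For $k\ge n+1$ write
$$\eta_k=\#\{j\in[n]:U_j\le U_k\}+\#\{n<j\le k:U_j\le U_k\}.$$
The first summand equals $\#\{i\in[n]:U_{i:n}\le U_k\}$, because counting how many of $U_1,\dots,U_n$ lie below a threshold does not depend on their labelling; hence it is measurable with respect to $\sigma(U_{1:n},\dots,U_{n:n})\vee\sigma(U_k)$. The second summand depends only on $U_{n+1},\dots,U_k$. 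Thus $\eta_k$ is measurable with respect to the right-hand side for every $k\ge n+1$, and this direction follows.

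The reverse inclusion is the main point. The idea is that discarding $\eta_1,\dots,\eta_n$ destroys exactly the relative order of $U_1,\dots,U_n$ (that is $S_n$), while retaining their values as an unordered set (the order statistics) together with all later values. Concretely I would reconstruct, as a deterministic function of $\eta_{n+1},\eta_{n+2},\dots$, the increasing rearrangement of $\{U_1,\dots,U_m\}$ for every $m\ge n$, represented as a word over the placeholder symbols $p_1<\dots<p_n$ (standing for the first-$n$ block in increasing value, so that $p_i$ corresponds to $U_{i:n}$) and the honest labels $n+1,\dots,m$. One starts from the word $(p_1,\dots,p_n)$ and, for each $k=n+1,n+2,\dots$, inserts the symbol $k$ into the $\eta_k$-th slot; since $\eta_k$ is the rank of $U_k$ among $U_1,\dots,U_k$, after step $m$ this word is exactly the sorted order of $\{U_1,\dots,U_m\}$. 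This mirrors the reconstruction of the $S$-process from $\bbeta$ in Section~\ref{subsection:linord}, except that the first $n$ coordinates are collapsed to anonymous placeholders because their internal order is unavailable.

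Finally I would read off the values by the strong law of large numbers, exactly as $\bbU$ is recovered from $L$. For a fixed honest label $k\ge n+1$ its position in the reconstructed word of $\{U_1,\dots,U_m\}$ is $\#\{j\le m:U_j\le U_k\}$, so dividing by $m$ gives $U_k$ almost surely. For a placeholder $p_i$ its position is $\#\{j\le m:U_j\le U_{i:n}\}$; the first-$n$ contribution is $i$, which is negligible after normalising by $m$, while $\tfrac1m\#\{n<j\le m:U_j\le U_{i:n}\}\to U_{i:n}$ almost surely, using that the block $U_1,\dots,U_n$ (hence $U_{i:n}$) is independent of $U_{n+1},U_{n+2},\dots$. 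Since all these positions are $\sigma(\eta_{n+1},\dots,\eta_m)$-measurable, both the order statistics $U_{1:n},\dots,U_{n:n}$ and all later values $U_k$ are $\sigma(\eta_k:k\ge n+1)$-measurable, yielding the reverse inclusion and hence the claimed equality. The main obstacle is precisely this reconstruction step: one must verify that the placeholder insertion really produces the increasing rearrangement and, crucially, that the limiting frequencies identify the order statistics of the first block and not merely the later values, which is where the independence of the two blocks is essential.
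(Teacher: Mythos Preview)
Your proof is correct, but it follows a genuinely different route from the paper's. The paper argues by \emph{cancellation of an independent factor}: it writes $\sigma(\bbeta)=\sigma(S_n)\vee\sigma(\eta_k:k\ge n+1)$ and $\sigma(\bbU)=\sigma(S_n)\vee\sigma(U_{1:n},\dots,U_{n:n})\vee\sigma(U_k:k\ge n+1)$, invokes the already established $\sigma(\bbeta)\as\sigma(\bbU)$, and then removes the common factor $\sigma(S_n)$ from both sides using that $S_n$ is independent of $\sigma(\eta_k:k\ge n+1)\vee\sigma(U_{1:n},\dots,U_{n:n})\vee\sigma(U_k:k\ge n+1)$. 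This is a two-line argument once the folklore of Section~\ref{subsection:linord} is in place, but it rests on the (standard yet slightly non-obvious) principle that $\sigma(X)\vee\cG_1\as\sigma(X)\vee\cG_2$ with $X$ independent of $\cG_1\vee\cG_2$ implies $\cG_1\as\cG_2$.

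Your approach, by contrast, is entirely constructive: you exhibit explicit measurable functions in both directions. The forward inclusion via the rank decomposition is clean, and your reverse inclusion---building the sorted word with placeholders $p_1,\dots,p_n$ and honest labels via successive insertions, then reading off values by normalised ranks---is exactly the placeholder analogue of the $\bbeta\to\bbS\to\bbU$ reconstruction. The conditional SLLN step (condition on $U_{i:n}$, use independence of the later $U$-block) is precisely what is needed to recover the order statistics. Your route is longer but more self-contained; the paper's is shorter but leans on the independence-cancellation trick and the global identity $\sigma(\bbeta)\as\sigma(\bbU)$.
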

	\begin{proof}
		Since $\sigma(S_n)=\sigma(\eta_1,\dots,\eta_n)$ we can write
		$$\sigma(\bbeta)=\sigma(S_n)\vee\sigma(\eta_k:k\geq n+1).$$
		It also holds that
		$$\sigma(\bbU)=\sigma(S_n)\vee\sigma(U_{1:n},\dots,U_{n:n})\vee\sigma(U_k:k\geq n+1).$$
		Since $\sigma(S_n)$ is clearly independent from $\sigma(\eta_k:k\geq n+1)\vee\sigma(U_{1:n},\dots,U_{n:n})\vee\sigma(U_k:k\geq n+1)$, one can remove $\sigma(S_n)$ on both sides of
		$$\sigma(S_n)\vee\sigma(\eta_k:k\geq n+1)\as\sigma(S_n)\vee\sigma(U_{1:n},\dots,U_{n:n})\vee\sigma(U_k:k\geq n+1)$$
		and the result follows.
	\end{proof}
	
	\subsection{Exchangeable Pairs of Processes and Linear Orders} 
	
	We consider the 'product' of Sections 3.1 and 3.2, i.e. $S=A^{\bN}\times\bL$, where $A$ is some Borel space, and the \emph{diagonal action}
	\begin{equation*}
	 	\bS_{\infty}\times A^{\bN}\times\bL~\rightarrow~A^{\bN}\times\bL,~~(\pi,\by,l)\mapsto(\by^{\pi},l^{\pi}).
	\end{equation*}
	An $A^{\bN}\times\bL$-valued RV $(\bbY,L)$ is called (jointly) exchangeable if $\law(\bbY^{\pi},L^{\pi})=\law(\bbY,L)$ for each $\pi\in\bS_{\infty}$. If $(\bbY,L)$ is exchangeable then both $\bbY$ and $L$ are exchangeable and so the results of subsections~3.1 and~3.2 apply to them individually. We are interested in the joint behavior of exchangeable $(\bbY,L)$.
	\begin{lemma}\label{lemma:help}
		Let $(\bbY,\bbU)=(Y_j,U_j)_{j\geq 1}$ be an $A\times\bR$-valued exchangeable process with $\law(\bbU)=\unif[0,1]^{\otimes\bN}$ and let $L$ be the linear order corresponding to $\bbU$. Then $(\bbY,L)$ is exchangeable and the map 
		\begin{align*}
			\Big\{\law(\bbY,\bbU):(\bbY,\bbU)~&\text{is exchangeable $A\times\bR$-valued and}~\law(\bbU)=\unif[0,1]^{\otimes\bN}\Big\}\\
											  &\longrightarrow \cME(A^{\bN}\times\bL),~~\law(\bbY,\bbU)\mapsto \law(\bbY,L)
		\end{align*}
		is an affine bijection.
	\end{lemma}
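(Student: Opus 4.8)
The plan is to realise $T:\law(\bbY,\bbU)\mapsto\law(\bbY,L)$ as the pushforward under a fixed, \emph{equivariant} measurable map that is almost surely invertible; then affineness and bijectivity both reduce to the equivalence $\sigma(\bbU)\as\sigma(L)$ established in Section~3.2. First I would record the equivariance of the order map. Let $\Phi$ be the almost surely defined map sending a real sequence $\bbu$ with pairwise distinct entries to the linear order $l=\Phi(\bbu)$ with $ilj:\Leftrightarrow u_i<u_j$. A one-line computation gives $\Phi(\bbu^{\pi})=\Phi(\bbu)^{\pi}$ for every $\pi\in\bS_{\infty}$, since $i\,\Phi(\bbu^{\pi})\,j\Leftrightarrow u_{\pi^{-1}(i)}<u_{\pi^{-1}(j)}\Leftrightarrow i\,\Phi(\bbu)^{\pi}\,j$. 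Because $L=\Phi(\bbU)$ almost surely and $(\bbY,\bbU)$ is exchangeable, applying the fixed measurable map $g:(\by,\bbu)\mapsto(\by,\Phi(\bbu))$ to the equal laws $\law(\bbY,\bbU)=\law(\bbY^{\pi},\bbU^{\pi})$ yields $\law(\bbY,L)=\law(\bbY^{\pi},L^{\pi})$, proving the first assertion that $(\bbY,L)$ is exchangeable and that $T$ is well defined into $\cME(A^{\bN}\times\bL)$. Since $T=g_{*}$ is a pushforward under a fixed measurable map, it is automatically affine and measurable for the evaluation $\sigma$-fields on the two measure spaces.

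For bijectivity I would exhibit an explicit inverse using the recovery map of Section~3.2. Let $\Psi(l)_i:=\lim_n\tfrac1n\#\{k\in[n]:k\,l\,i\}$, defined on the set where all these limits exist, and set $h:(\by,l)\mapsto(\by,\Psi(l))$. The same frequency computation, using that $\pi\in\bS_{\infty}$ fixes all but finitely many indices so that $\pi^{-1}([n])=[n]$ for all large $n$, shows $\Psi(l^{\pi})=\Psi(l)^{\pi}$. Now take any exchangeable $(\bbY,L)$ with $\law(\bbY,L)\in\cME(A^{\bN}\times\bL)$. By Proposition~\ref{thm:exchlin} its $L$-marginal is the unique exchangeable linear order, so by the Section~3.2 correspondence $\Psi(L)$ is almost surely defined, $\law(\Psi(L))=\unif[0,1]^{\otimes\bN}$, and $\Phi(\Psi(L))=L$ almost surely. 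Equivariance of $\Psi$ together with the pushforward argument above gives that $(\bbY,\Psi(L))$ is exchangeable, hence $h_{*}$ maps $\cME(A^{\bN}\times\bL)$ into the domain of $T$. As $g\circ h$ and $h\circ g$ are the identity on sets of full measure, $h_{*}$ is a two-sided inverse of $T$, so $T$ is the desired affine bijection.

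The only genuinely delicate point — and the step I would treat most carefully — is the almost-sure nature of $\Phi$, $\Psi$ and their equivariance. One must check that the exceptional sets (entries not pairwise distinct for $\Phi$; non-existence of the empirical-frequency limits for $\Psi$) are $\bS_{\infty}$-invariant and null under \emph{every} law occurring in the domain and codomain. This is where Proposition~\ref{thm:exchlin} does the work: distinctness and the strong-law limits are preserved by finite permutations, and the $L$-marginal of any member of $\cME(A^{\bN}\times\bL)$ is forced to be the unique exchangeable linear order, for which the recovery of $\bbU$ is valid almost surely. Beyond this bookkeeping there is no real obstacle; the lemma is in essence the transfer of the pointwise equivalence $\sigma(\bbU)\as\sigma(L)$ to the level of joint laws.
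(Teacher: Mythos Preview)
Your proposal is correct and follows essentially the same route as the paper's own proof: the paper argues in three lines that the map is ``clearly affine'', that $\bbU^{\pi}$ is the $U$-process corresponding to $L^{\pi}$ (your equivariance of $\Phi$), and that mutual a.s.\ recoverability of $(\bbY,\bbU)$ and $(\bbY,L)$ gives bijectivity. You have simply made these steps explicit---writing out the equivariance of both $\Phi$ and $\Psi$, invoking Proposition~\ref{thm:exchlin} to force the $L$-marginal, and tracking the $\bS_{\infty}$-invariance of the null sets---which is exactly the bookkeeping the paper suppresses.
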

	\begin{proof}
		The map is clearly affine. For each $\pi\in\bS_{\infty}$ it holds that if $\bbU$ is the $U$-process corresponding to $L$ then $\bbU^{\pi}$ is the $U$-process corresponding to $L^{\pi}$. So $(\bbY,L)$ is exchangeable iff $(\bbY,\bbU)$ is exchangeable. Since one can recover $(\bbY,\bbU)$ from $(\bbY,L)$ and vice versa, the map $\law(\bbY,L)\mapsto \law(\bbY,\bbU)$ is one-to-one as claimed. 
	\end{proof}

	As a consequence of Lemmas~\ref{lemma:productspacedefinetti}~and~\ref{lemma:help}, we obtain a representation result for ergodic exchangeable pairs $(\bbY,L)$. Recall that we have defined $\cC(A)\subset\cM_1(A\times\bR)$ to be the set of all probability measures $\rho$ on $A\times\bR$ with $\rho(A\times\cdot)=\unif[0,1]$. 
	
	\begin{proposition}\label{prop:exchpair}
		Let $\rho\in\cC(A)$ and $(\bbY,\bbU)=(Y_j,U_j)_{j\geq 1}$ be iid $\sim\rho$. Let $L$ be the linear order corresponding to $\bbU$. Then $(\bbY,L)$ is ergodic exchangeable and the map $\rho\mapsto \law(\bbY,L)$ is a bijection between $\cC(A)$ and $\erg\cME(A^{\bN}\times\bL)$.
	\end{proposition}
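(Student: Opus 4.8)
The plan is to obtain this as a clean composition of Lemma~\ref{lemma:productspacedefinetti}(ii) with Lemma~\ref{lemma:help}, using only that affine bijections between convex sets preserve extreme points and that, by the general structure of the exchangeability simplex recalled at the start of Section~3, the extreme points of $\cME(A^{\bN}\times\bL)$ are exactly the ergodic laws $\erg\cME(A^{\bN}\times\bL)$.

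First I would apply Lemma~\ref{lemma:productspacedefinetti}(ii) with $A_1=A$, $A_2=\bR$ and $\mu=\unif[0,1]$. It asserts that
\[
\cK:=\Big\{\law(\bbY,\bbU):(\bbY,\bbU)~\text{is exchangeable $A\times\bR$-valued and}~\law(\bbU)=\unif[0,1]^{\otimes\bN}\Big\}
\]
is a simplex whose extreme points are precisely the iid laws satisfying the constraint on $\bbU$. An iid law $\rho^{\otimes\bN}$ fulfils $\law(\bbU)=\unif[0,1]^{\otimes\bN}$ exactly when the second marginal of $\rho$ equals $\unif[0,1]$, i.e.\ exactly when $\rho\in\cC(A)$. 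Since distinct $\rho$ produce distinct product laws, $\rho\mapsto\rho^{\otimes\bN}$ is a bijection from $\cC(A)$ onto the set $\ex\cK$ of extreme points of $\cK$.

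Next I would invoke Lemma~\ref{lemma:help}, which provides an affine bijection $\Phi:\cK\to\cME(A^{\bN}\times\bL)$, $\law(\bbY,\bbU)\mapsto\law(\bbY,L)$, where $L$ is the linear order corresponding to $\bbU$. A short verification shows any affine bijection carries extreme points to extreme points in both directions: if $\Phi(x)=\tfrac12(\Phi(a)+\Phi(b))=\Phi(\tfrac12(a+b))$ then injectivity gives $x=\tfrac12(a+b)$, so $x$ extreme forces $a=b$, and symmetrically for $\Phi^{-1}$. Hence $\Phi$ restricts to a bijection $\ex\cK\to\ex\cME(A^{\bN}\times\bL)=\erg\cME(A^{\bN}\times\bL)$. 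Composing, $\rho\mapsto\rho^{\otimes\bN}\mapsto\law(\bbY,L)$ is exactly the map of the statement, with $(\bbY,\bbU)$ iid $\sim\rho$ and $L$ corresponding to $\bbU$, and it is a bijection from $\cC(A)$ onto $\erg\cME(A^{\bN}\times\bL)$. In particular each image $\law(\bbY,L)$ is an extreme point, so $(\bbY,L)$ is ergodic exchangeable, settling the remaining assertion.

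I do not expect a genuine obstacle here, as the substantive work has been front-loaded into Lemmas~\ref{lemma:productspacedefinetti} and~\ref{lemma:help}. The only points demanding care are the book-keeping that the iid constraint $\law(\bbU)=\unif[0,1]^{\otimes\bN}$ corresponds precisely to membership $\rho\in\cC(A)$, and the routine fact that affine bijections preserve the extreme-point structure, which is what lets the two lemmas be chained into the claimed bijection.
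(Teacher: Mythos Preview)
Your proposal is correct and follows exactly the route the paper itself takes: the paper states the proposition immediately after Lemma~\ref{lemma:help} with the remark ``As a consequence of Lemmas~\ref{lemma:productspacedefinetti}~and~\ref{lemma:help}'' and gives no further argument. You have simply spelled out that consequence explicitly, including the routine observation that an affine bijection maps extreme points to extreme points, so there is nothing to add.
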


	\begin{remark}\label{rem:pairs}
		One can use very similar arguments when considering the case $\cME(\bL^k)$ for some $k\geq 1$ and obtain that laws of ergodic exchangeable tuples $(L_1,\dots,L_k)$ are in one-to-one correspondence with $k$-dimensional copulas due to the fact that if $(U^i_j)_{j\geq 1}$ is the $U$-process corresponding to $L_i$, then $(U^1_j,\dots,U^k_j)_{j\geq 1}$ is iid. 
	\end{remark}

	\subsection{Proof of Theorem 1} Let $(\bbY,L)$ be $A^{\bN}\times\bL$-valued exchangeable and let $\bbS, \bbU, \bbeta$ be the $S$/$U$/eraser-processes corresponding to $L$. For each $n$ define 
	\begin{equation*}
		\bbW_n:=\ios(Y_1,\dots,Y_n,U_1,\dots,U_n)=(Y_{S_n(1)},\dots,Y_{S_n(n)})
	\end{equation*} 
	and $\bbW=(\bbW_n)_{n\geq 1}$. 
	\begin{lemma}\label{lemma:1}
		$(\bbW,\bbeta)$ is a GEWP.
	\end{lemma}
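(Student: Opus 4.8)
The plan is to verify the three defining properties (i)--(iii) of Definition~\ref{def:gewp} for $(\bbW,\bbeta)$, where throughout I write $\bbW_n=(Y_{S_n(1)},\dots,Y_{S_n(n)})$ and recall from Section~\ref{subsection:linord} that the eraser-process corresponding to $L$ satisfies $\eta_n=S_n^{-1}(n)=\#\{k\in[n]:U_k\le U_n\}$, i.e.\ the rank of $U_n$ among $U_1,\dots,U_n$. By Lemma~\ref{lemma:help} the pair $(\bbY,\bbU)$ is jointly exchangeable and $\bbU$ is i.i.d.\ $\unif[0,1]$, so the $U_i$ are a.s.\ distinct. Property (i) is immediate since $\bbW_n\in A^n$ by construction. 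For property (iii) I would invoke the defining property (ii) of the $S$-process: erasing the $\eta_{n+1}$-th entry of the one-line notation of $S_{n+1}$, which is the entry $S_{n+1}(\eta_{n+1})=n+1$, returns the one-line notation of $S_n$. Since the map $j\mapsto Y_j$ commutes with deleting a coordinate, applying it entrywise yields $\erase(\bbW_{n+1},\eta_{n+1})=(Y_{S_n(1)},\dots,Y_{S_n(n)})=\bbW_n$ almost surely.

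The substance of the lemma is property (ii): that $\eta_n$ is $\unif[n]$ and independent of $\cF_n=\sigma(\bbW_k,\eta_{k+1}:k\ge n)$. The key structural point is that $\cF_n$ sees the first $n$ pairs $(Y_i,U_i)$ only through their \emph{unordered} configuration. Writing $M_n:=\{(Y_i,U_i):1\le i\le n\}$ for this (a.s.\ $n$-point) configuration and $T_n:=(Y_k,U_k)_{k>n}$ for the tail, I would first show $\cF_n\subseteq\sigma(M_n)\vee\sigma(T_n)$: for $k\ge n$ the word $\bbW_k=\ios((Y_i)_{i\le k},(U_i)_{i\le k})$ is a symmetric function of the first $k$ pairs, hence a function of $M_n$ and $T_n$; and for $m\ge n+1$ the rank $\eta_m=1+\#\{i\le n:U_i<U_m\}+\#\{n<i<m:U_i<U_m\}$ depends on the first $n$ pairs only through the value-set of $M_n$. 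Thus every generator of $\cF_n$ is insensitive to the internal ordering of the first $n$ pairs.

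It then remains to compute the conditional law of $\eta_n$ given $\sigma(M_n)\vee\sigma(T_n)$. Here I would restrict the $\bS_\infty$-invariance of $(\bbY,\bbU)$ to the subgroup $\bS_n\hookrightarrow\bS_\infty$ acting by permuting the first $n$ indices and fixing the tail; then the joint law of $((Y_i,U_i)_{i\le n},T_n)$ is $\bS_n$-invariant, while $M_n$ and $T_n$ generate exactly the $\sigma$-field of $\bS_n$-invariant events (two configurations lie in the same orbit iff they share $M_n$ and $T_n$). By the standard orbit argument for finite group actions, the regular conditional distribution of the ordered first-$n$ block given $(M_n,T_n)$ is uniform over the $n!$ relabellings of $M_n$; in particular $(Y_n,U_n)$ is a uniformly chosen point of $M_n$, so its rank $\eta_n$ is uniform on $[n]$ \emph{irrespective of the conditioning value}. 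This simultaneously yields $\eta_n\sim\unif[n]$ and $\eta_n\perp\sigma(M_n)\vee\sigma(T_n)\supseteq\cF_n$, establishing (ii). I expect the main obstacle to be precisely the inclusion $\cF_n\subseteq\sigma(M_n)\vee\sigma(T_n)$, i.e.\ checking that every word $\bbW_k$ and every eraser $\eta_{k+1}$ with $k\ge n$ is a function of the unordered data $(M_n,T_n)$; once this is in place the finite-exchangeability orbit argument delivers the conclusion directly, without passing through the de~Finetti mixing representation (which in general would not preserve the required independence).
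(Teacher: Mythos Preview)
Your proof is correct and takes a genuinely different route from the paper's. The paper first establishes the intermediate fact that $\bbW_n$ and $S_n$ are independent for each $n$, via a direct computation: writing $\bP(\bbW_n\in B,\,S_n=\pi)=\bP((Y_{\pi(1)},\dots,Y_{\pi(n)})\in B,\,S_n=\pi)$ and using exchangeability of $(Y_j,U_j)_{j\ge1}$ to see this is independent of $\pi$. It then reduces the required independence of $\eta_k$ from $\cF_k$ to the independence of $(\eta_1,\dots,\eta_n)$ from $\bbW_n$ for each $n\ge k$, via the recursion $\bbW_j=\erase(\bbW_{j+1},\eta_{j+1})$ together with the fact that $(\eta_1,\dots,\eta_n)$ encodes $S_n$ and that the $\eta_j$'s are already mutually independent.

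Your approach instead identifies $\cF_n$ as lying inside the $\sigma$-field $\sigma(M_n,T_n)$ of invariants under the $\bS_n$-action on the first $n$ indices, and then invokes the orbit-averaging formula for finite-group conditional distributions to obtain $\law(\eta_n\mid M_n,T_n)=\unif[n]$ directly. This is arguably more conceptual: it isolates exactly which symmetry is responsible for the independence and avoids the reduction chain through the erase-recursion. The paper's approach, on the other hand, yields the explicit independence $\bbW_n\perp S_n$ as a reusable byproduct, and indeed this fact is invoked again in the proofs of Lemma~\ref{lemma:2} and Lemma~\ref{lemma:22}. Your argument contains this independence implicitly (since $\bbW_n$ is $\sigma(M_n)$-measurable and the orbit argument gives $\law(S_n\mid M_n,T_n)=\unif(\bS_n)$), but does not foreground it; if you adopt your proof in place of the paper's, you would want to record $\bbW_n\perp S_n$ separately for later use.
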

	\begin{proof}
		We need to check that $(\bbW,\bbeta)$ fulfills the assumptions (i)-(iii) from Definition~\ref{def:gewp}. Properties (i) and (iii) are obvious from the construction of $(\bbW,\bbeta)$. The first part of (ii), $\bbeta_n\sim\unif[n]$, follows since $\bbeta$ is the eraser-process corresponding to the exchangeable linear order $L$. The only thing left to check is that $\eta_k$ is independent from $(\bbW_n,\bbeta_{n+1})_{n\geq k}$ for each $k$. To see this, we first show that $\bbW_n$ and $S_n$ are independent for each $n$. Let $B\subseteq A^n$ be an event and let $\pi\in\bS_n$. Because $\bbW_n=(Y_{S_n(1)},\dots,Y_{S_n(n)})$ we have
		\begin{equation}\label{eq:key1}
			\bP(\bbW_n\in B, S_n=\pi)=\bP((Y_{\pi(1)},\dots,Y_{\pi(n)})\in B, S_n=\pi).
		\end{equation}
		As we have already seen in Lemma~\ref{lemma:help}, since $(\bbY,L)$ is exchangeable, so is $(Y_k,U_k)_{k\geq 1}$, where $\bbU=(U_k)_{k\geq 1}$ is the $U$-process corresponding to $L$. Now $S_n$ is the unique random permutation arranging the first $n$ $U$-values. Because of exchangeability of $(Y_k,U_k)_{k\geq 1}$ the expression in 
		(\ref{eq:key1}) does not depend on $\pi$. Summing over all $\pi\in\bS_n$ yields
		\begin{equation*}
			\bP(\bbW_n,S_n=\pi)=\bP(\bbW_n\in B)\frac{1}{n!}=\bP(\bbW_n\in B)\bP(S_n=\pi),
		\end{equation*}
		hence the independence of $\bbW_n$ and $S_n$. Now we want to show that $\eta_k$ is independent from $(\bbW_n,\bbeta_{n+1})_{n\geq k}$ for each $k$. It is enough to show that 
		\begin{equation*}
			\eta_k~~~\text{and}~~~(\bbW_k,\eta_{k+1},\bbW_{k+1},\dots,\eta_{n},\bbW_{n})
		\end{equation*}
		are independent for each $k\leq n$. Since $\bbW_k=\erase(\bbW_{k+1},\eta_{k+1})$ almost surely for each $k$, we only need to show that 
		\begin{equation*}
			\eta_k~~~\text{and}~~~(\eta_{k+1},\eta_{k+2},\dots,\eta_{n},\bbW_{n})
		\end{equation*}
		are independent. Since the eraser-process is a process of independent RVs we are done if we can show that $(\eta_1,\dots,\eta_{n})$ and $\bbW_{n}$
		are independent. But this follows from the fact that $(\eta_1,\dots,\eta_n)$ contains the same information as $S_n$ together with the independence form $\bbW_n$ and $S_n$.
	\end{proof}
	Now let $(\bbW,\bbeta)$ be a GEWP and let $L$ be exchangeable linear order corresponding to $\bbeta$. For each $j\geq 1$ we define 
	\begin{equation*}
		Y_j:=W_{j,\eta_j},
	\end{equation*}
	where $\bbW_n=(W_{n,1},\dots,W_{n,n})$. The random letter $Y_j\in A$ is the one that gets erased by passing from $\bbW_j$ to $\bbW_{j-1}$. Let $\bbY=(Y_j)_{j\geq 1}$. 
	\begin{lemma}\label{lemma:2}
		$(\bbY,L)$ is exchangeable.
	\end{lemma}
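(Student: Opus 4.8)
The plan is to reduce everything to the joint law of the finite blocks $(Y_1,\dots,Y_n,S_n)$, where $S_n=\ps(U_1,\dots,U_n)$ is the $S$-process attached to $L$ (equivalently, built from $\bbeta$). Since $\law(\bbY,L)$ is determined by $(\law(Y_1,\dots,Y_n,L_{|n}))_{n\ge1}$, each $L_{|n}$ is encoded by $S_n$, and any finite permutation eventually restricts to some $\bS_n$, joint exchangeability of $(\bbY,L)$ is equivalent to the following for every $n$ and every $\tau\in\bS_n$: the law of $\big((Y_{\tau^{-1}(1)},\dots,Y_{\tau^{-1}(n)}),\,\tau S_n\big)$ equals that of $\big((Y_1,\dots,Y_n),S_n\big)$. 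Here one checks, exactly as in the proof of Proposition~\ref{thm:exchlin}, that the diagonal action sends $S_n$ to $\tau S_n$ and $(Y_j)$ to $(Y_{\tau^{-1}(j)})$.

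Two ingredients feed into this. First, I would prove by induction on $n$ that $W_{n,i}=Y_{S_n(i)}$ for all $i\in[n]$, i.e.\ $\bbW_n=(Y_{S_n(1)},\dots,Y_{S_n(n)})$, equivalently $Y_j=W_{n,S_n^{-1}(j)}$. The base case uses $\eta_n=S_n^{-1}(n)$, which gives $Y_n=W_{n,\eta_n}=W_{n,S_n^{-1}(n)}$; the inductive step combines the erasing recursion $\bbW_{n-1}=\erase(\bbW_n,\eta_n)$ with the insertion recursion that builds $S_n$ from $S_{n-1}$ by placing $n$ into slot $\eta_n$, together with $S_n(\eta_n)=n$. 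Thus $(Y_1,\dots,Y_n)$ is simply $\bbW_n$ read off in the order prescribed by $S_n^{-1}$.

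Second --- and this I expect to be the real crux --- I would show, using only the GEWP axioms (and not exchangeability, which is what we are trying to prove), that $\sigma(S_n)=\sigma(\eta_1,\dots,\eta_n)$ is independent of $\cF_n$, so in particular $\bbW_n\perp S_n$ with $S_n$ uniform on $\bS_n$. The key observation is that $\bbW_k=\erase(\bbW_{k+1},\eta_{k+1})$ with $\bbW_{k+1}\in\cF_{k+1}$ forces $\cF_k\as\sigma(\eta_{k+1})\vee\cF_{k+1}$, hence $\cF_k\as\sigma(\eta_{k+1},\dots,\eta_n)\vee\cF_n$ for $k\le n$. A telescoping application of the hypotheses ``$\eta_k$ independent of $\cF_k$'' (peeling off $\eta_1,\eta_2,\dots$ in turn) then yields $\bE[\prod_k f_k(\eta_k)\,g(X)]=\prod_k\bE[f_k(\eta_k)]\,\bE[g(X)]$ for every bounded $\cF_n$-measurable $X$, which is exactly the asserted independence; uniformity of $S_n$ follows since the $\eta_k$ are independent with $\eta_k\sim\unif[k]$ and $(\eta_1,\dots,\eta_n)\mapsto S_n$ is a bijection onto $\bS_n$.

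Finally I would assemble the two ingredients. Writing $\Phi(\bw,\sigma):=(w_{\sigma^{-1}(1)},\dots,w_{\sigma^{-1}(n)},\sigma)$, the first ingredient says $(Y_1,\dots,Y_n,S_n)=\Phi(\bbW_n,S_n)$, and one checks the intertwining relation that applying the $\tau$-action to $\Phi(\bw,\sigma)$ produces $\Phi(\bw,\tau\sigma)$. Since $\bbW_n\perp S_n$ and left multiplication by $\tau$ preserves the uniform law on $\bS_n$, the law of $(\bbW_n,S_n)$ is invariant under $(\bw,\sigma)\mapsto(\bw,\tau\sigma)$; pushing this invariance through $\Phi$ gives precisely the required equality of laws for every $\tau\in\bS_n$, and hence the joint exchangeability of $(\bbY,L)$. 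The only place demanding care is keeping the composition conventions for $S_n$, $S_n^{-1}$ and the coordinate permutations consistent, so that the intertwining relation comes out correctly.
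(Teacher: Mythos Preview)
Your proposal is correct and follows essentially the same approach as the paper: both arguments reduce to the identity $Y_j=W_{n,S_n^{-1}(j)}$, the independence $\bbW_n\perp S_n$ with $S_n$ uniform on $\bS_n$, and the invariance of this joint law under left translation of $S_n$, then push this through a map to the permuted $(\bbY,L)$-marginals. The paper routes the argument through the $U$-process (writing $U_j=\lim_n n^{-1}S_n^{-1}(j)$ and showing $(Y_k,U_k)_k$ is exchangeable), whereas you work directly with $(Y_1,\dots,Y_n,S_n)$ and $L_{|n}$; your version is slightly more self-contained in that you spell out how $\bbW_n\perp S_n$ follows from the GEWP axioms alone, where the paper's back-reference to the proof of Lemma~\ref{lemma:1} is a little loose on this point.
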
	
	\begin{proof}
		We need to show that $\cL(\bbY^{\pi},L^{\pi})=\cL(\bbY,L)$ for each $\pi\in\bS_{\infty}$. Let $\bbU$ be the $U$-process corresponding to $L$. By Lemma~\ref{lemma:help} we need to show that $(Y_k,U_k)_{k\geq 1}$ is a $A\times\bR$-valued exchangeable process, hence we need to show that for each $k$ and each $\pi\in\bS_k$ 
		\begin{equation*}
			(Y_{\pi(1)},U_{\pi(1)},\dots,Y_{\pi(k)},U_{\pi(k)})~~~\text{and}~~~(Y_1,U_1,\dots,Y_k,U_k)
		\end{equation*}
		have the same distribution. Let $\bbS=(S_n)_{n\geq 1}$ be the $S$-process corresponding to $\bbeta$. Since $U_j=\lim_nn^{-1}S_n^{-1}(j)$ almost surely for each $j$ it is enough to show that
		\begin{equation}\label{eq:key}
			(Y_{\pi(1)},S_n^{-1}(\pi(1)),\dots,Y_{\pi(k)},S_n^{-1}(\pi(k)))~~~\text{and}~~~(Y_1,S_n^{-1}(1),\dots,Y_k,S_n^{-1}(k))
		\end{equation}
		have the same distribution for all $k\leq n$. Because $\bbW_n=\erase(\bbW_{n+1},\eta_{n+1})$ almost surely for each $n$ it holds that
		\begin{equation*}
			Y_j=W_{n,S_n^{-1}(j)}~~\text{almost surely for all}~j\leq n.
		\end{equation*}
		Hence the vector on the right side of (\ref{eq:key}) can be written as $h(\bbW_n,S_n)$ for a suitable function $h$ and the vector on the left side can be written as $h(\bbW_n,\pi^{-1}\circ S_n)$, where we have extended $\pi\in\bS_k$ to $\pi\in\bS_n$ by identity. As we have seen in the proof of Lemma~\ref{lemma:1}, since $(\bbW,\bbeta)$ is a GEWP, $\bbW_n$ and $S_n$ are independent for each $n$, moreover $\cL(S_n)=\cL(\pi^{-1}\circ S_n)=\unif(\bS_n)$. Hence $(\bbW_n,S_n)$ and $(\bbW_n,\pi^{-1}\circ S_n)$ have the same distribution, which finishes the proof.
	\end{proof}

	\begin{proof}[Proof of Theorem \ref{thm:main1}]
		Let $(\bbY,L)$ be a jointly exchangeable pair and let $(\bbW,\bbeta)$ be the GEWP constructed as in Lemma~\ref{lemma:1}. Lemma~\ref{lemma:2} and the fact that the constructions we have presented (passing from a GEWP to exchangeable pair $(\bbY,L)$ and vice versa) are inverse of each other, imply that the map $\law(\bbY,L)\mapsto\law(\bbW,\bbeta)$ is a bijection between $\cME(A^{\bN}\times\bL)$ and $\cM(A)$. Since this bijection is clearly affine, it maps ergodic laws bijectively to ergodic laws. The representation result in Theorem~\ref{thm:main1} hence follows from the representation result of ergodic exchangeable pairs $(\bbY,L)$, Proposition~\ref{prop:exchpair}.
	\end{proof}
	
	We finish Section~3 by taking a closer look to random directing measures associated to GEWPs. Let $(\bbW,\bbeta)$ be a GEWP over a Borel space $A$ and let $\bbY=(Y_j)_{j\geq 1}$ be the sequence of erased letters, i.e. $Y_j=W_{j,\eta_j}$ and let $\bbU=(U_j)_{j\geq 1}$ be the $U$-process corresponding to $\bbeta$. We have seen that the process $(\bbY,\bbU)=(Y_j,U_j)_{j\geq 1}$ is an $A\times\bR$-valued exchangeable process with $\law(\bbU)=\unif[0,1]^{\otimes\bN}$. Lemma~\ref{lemma:productspacedefinetti} and de~Finetti's~theorem yield that there exists an a.s. unique random directing measure $\Xi$ with $\bP(\Xi\in\cC(A))=1$ such that $\law(\bbY,\bbU|\Xi)=\Xi^{\otimes\bN}$ almost surely. This yields 
	\begin{equation*}
		\law(\bbW,\bbeta|\Xi)=\GL(\Xi)~~\text{almost surely},
	\end{equation*}
	see Theorem~\ref{thm:main1} for the definition of $\GL(\rho), \rho\in\cC(A)$. If $A$ is a polish space, so is $A\times\bR$ and we can obtain $\Xi$ as an almost sure weak limit by
	\begin{equation}\label{eq:app}
		\Xi = \lim\limits_{n\rightarrow\infty}\frac{1}{n}\sum_{j=1}^n\delta_{(Y_j,U_j)}~~\text{almost sure weakly}.
	\end{equation}
	Note that the $n$-th empirical measure $\frac{1}{n}\sum_{j=1}^n\delta_{(Y_j,U_j)}$ is \emph{not measurable with respect to $\sigma(\bbW_1,\eta_1,\dots,\bbW_n,\eta_n)$}. Changing the order of summation in the $n$-th empirical measure yields 
	\begin{equation}\label{eq:empiricalmeasure}
		\frac{1}{n}\sum_{j=1}^n\delta_{(Y_j,U_j)}=\frac{1}{n}\sum_{j=1}^n\delta_{(W_{n,j},U_{j:n})}.
	\end{equation}
	Since $\bbU=(U_j)_{j\geq 1}$ is a $U$-process, $\sup_{0\leq t\leq 1}|U_{\lfloor nt\rfloor:n}-t|\rightarrow 0$ almost surely. It is thus reasonable to replace $U_{j:n}$ with $j/n$ in (\ref{eq:empiricalmeasure}). We get an alternative approximation of $\Xi$ that is measurable with respect to $\sigma(\bbW_n)$:
	\begin{equation*}
		\frac{1}{n}\sum_{j=1}^n\delta_{(W_{n,j},\frac{j}{n})}=\rho_{\bbW_n},
	\end{equation*}
	where $\rho_{\bw}, \bbw\in \cup_kA^k$ has been introduced in Section 2.2. As a consequence of Theorem~2 we later obtain 
	\begin{equation*}
		\Xi=\lim\limits_{n\rightarrow\infty}\frac{1}{n}\sum_{j=1}^n\delta_{(W_{n,j},\frac{j}{n})}~~\text{almost sure weakly}.
	\end{equation*}
	
	\section{Proof of Theorem 2 via Coupling Methods}
	
	In this section we consider polish alphabets $A$. Our goal is to prove Theorem~\ref{thm:main2}. First we introduce the ($1^{\text{th}}$-)Wasserstein distance. If $S$ is a polish space then there exists a metric $d$ on $S$ that is compatible with $S$ and such that $(S,d)$ is a complete separable metric space. One can choose $d$ to be bounded. If $d$ is 
	such a metric, we define the associated \emph{Wasserstein metric} (also called \emph{Kantorovich-Rubinstein metric}) $\dW$ on $\cM_1(S)$ by
	\begin{align*}
		\dW(\mu,\nu):&=\inf\Big\{\bE(d(X,Y)): (X,Y)~\text{is $S\times S$-valued RV with $X\sim\mu, Y\sim\nu$}\Big\}\\
		&=\inf\limits_{\psi~\text{coupling of}~(\mu,\nu)}\int_{S\times S}d(x,y)d\psi(x,y).
	\end{align*}
	The space $(\cM_1(S),\dW)$ is again a complete separable metric space, $\dW$ is bounded and the topology generated by $\dW$ coincides with the weak topology on $\cM_1(S)$. Recall that the Borel $\sigma$-field on $\cM_1(S)$ coincides with the $\sigma$-field we have considered before, i.e. is generated by $P\mapsto P(B)$, $B\subset S$ measurable. On each of the polish spaces $A^k, \bR$ we choose bounded compatible metric $d_k, d_{\bR}$ like explained. On the product $A\times\bR$ we consider the metric $d((y,u),(y',u')):=d_1(y,y')+d_{\bR}(u,u')$. The associated Wasserstein metrics on $\cM_1(A^k)$ and $\cM_1(A\times\bR)$ are denoted by $\dW_k$ and $\dW$.
	
	\begin{lemma}\label{lemma:11}
		Let $k\geq 1$. 
		\begin{enumerate}
			\item[(i)] $\sample:\cM_1(A^k)\rightarrow\cM_1(A\times\bR)$ is continuous.
			\item[(ii)] $\spread(\cdot,k):\cM_1(A\times\bR)\rightarrow\cM_1(A^k)$ is continuous at $\rho\in\cM_1(A\times\bR)$ with a continuous second marginal distribution.
		\end{enumerate}
	\end{lemma}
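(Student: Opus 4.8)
The two parts are of rather different character: (i) is a soft weak-continuity statement, whereas in (ii) the entire difficulty sits in the discontinuity of the reordering map $\ps$ at ties, which the continuous–marginal hypothesis is designed to neutralise. For (i) I would simply verify weak continuity by testing against bounded continuous functions, which suffices because $\cM_1(A\times\bR)$ is metrizable. Unfolding the definitions, for bounded continuous $f:A\times\bR\to\bR$ one has
\[
\int_{A\times\bR} f\, d\sample(\mu)=\int_{A^k}\Big(\int f\, d\rho_{\bv}\Big)\, d\mu(\bv)=\int_{A^k}\frac1k\sum_{j=1}^k f\Big(v_j,\tfrac{j}{k}\Big)\, d\mu(\bv).
\]
The integrand $\bv=(v_1,\dots,v_k)\mapsto \frac1k\sum_{j=1}^k f(v_j,j/k)$ is a finite sum of bounded continuous functions on the product space $A^k$, hence itself bounded and continuous, so $\mu\mapsto\int f\, d\sample(\mu)$ is weakly continuous for every such $f$; this gives (i). Equivalently, coupling $\bbV\sim\mu$ and $\bbV'\sim\mu'$ and reading off $(V_J,J/k)$ and $(V'_J,J/k)$ for a common independent $J\sim\unif[k]$ exhibits $\sample$ as $1$-Lipschitz from the Wasserstein space over the (compatible, bounded) averaged metric $\frac1k\sum_j d_1(v_j,v'_j)$ on $A^k$ into $(\cM_1(A\times\bR),\dW)$, which again yields continuity.

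For (ii) it suffices to check sequential continuity, so I fix a sequence $\rho_n\to\rho$ weakly with $\rho$ having continuous second marginal and aim to show $\spread(\rho_n,k)\to\spread(\rho,k)$ weakly. The idea is to couple everything on one probability space and force the two orderings to agree eventually. Using Skorokhod's representation I would realise, for each coordinate $i\in[k]$ independently, random pairs $(Y^{(n)}_i,X^{(n)}_i)\sim\rho_n$ and $(Y_i,X_i)\sim\rho$ with $(Y^{(n)}_i,X^{(n)}_i)\to(Y_i,X_i)$ almost surely. The resulting vectors then have laws $\rho_n^{\otimes k}$ and $\rho^{\otimes k}$, so $\ios(Y^{(n)}_1,\dots,Y^{(n)}_k,X^{(n)}_1,\dots,X^{(n)}_k)$ and $\ios(Y_1,\dots,Y_k,X_1,\dots,X_k)$ have laws $\spread(\rho_n,k)$ and $\spread(\rho,k)$, and it remains to pass to the limit inside the $\ios$.

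The crux, and the step I expect to be the main obstacle, is an almost-sure stabilisation of the ordering permutation. Because the second marginal of $\rho$ is continuous, the values $X_1,\dots,X_k$ are almost surely pairwise distinct, with some minimal gap $\delta>0$; since $X^{(n)}_i\to X_i$ almost surely, for all large $n$ (depending on $\omega$) every perturbation is below $\delta/2$, whence the $X^{(n)}_i$ sit in the same strict order as the $X_i$ and $\ps(X^{(n)}_1,\dots,X^{(n)}_k)=\ps(X_1,\dots,X_k)$. Together with $Y^{(n)}_i\to Y_i$ this makes the two induced-order-statistics words coincide coordinatewise for large $n$ and converge almost surely in $A^k$, and almost-sure convergence gives $\spread(\rho_n,k)\to\spread(\rho,k)$ weakly. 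The continuous-marginal hypothesis is used exactly here, to guarantee $\delta>0$ almost surely, and cannot be dropped: over an $\rho$ whose second marginal carries an atom the reordering genuinely jumps. Should a quantitative modulus be needed later, the same mechanism runs through an optimal coupling of $(\rho,\rho')$: splitting the transport cost over $\{\ps\text{ agree}\}$ and its complement bounds $\dW_k(\spread(\rho,k),\spread(\rho',k))$ by a multiple of $\dW(\rho,\rho')$ plus $(\operatorname{diam} d_1)\,\bP(\ps\text{ disagree})$, and the gap estimate forces the latter probability to $0$ as $\rho'\to\rho$.
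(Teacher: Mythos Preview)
Your argument is correct and is essentially the paper's proof unpacked: for (i) the paper writes $\sample(\mu)$ as the push-forward of $\mu\otimes\unif[k]$ under the continuous map $(\bv,j)\mapsto(v_j,j/k)$ and invokes the continuous mapping theorem, which is exactly what your test-function computation verifies by hand. For (ii) the paper observes that $\ios:A^k\times\bR^k\to A^k$ is continuous at points with pairwise distinct $x$-coordinates, that $\rho^{\otimes k}$ puts full mass on such points when the second marginal of $\rho$ is continuous, and then applies the (extended) continuous mapping theorem to $\rho_n^{\otimes k}\to\rho^{\otimes k}$; your Skorokhod-plus-gap argument is precisely the standard proof of that extended continuous mapping theorem specialised to this situation, so the two routes coincide in substance and differ only in how much machinery is quoted versus reproved.
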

	\begin{proof}
		(i) Let $\mu_n,\mu\in\cM_1(A^k)$. $\mu_n\rightarrow\mu$ implies $\mu_n\otimes\unif[k]\rightarrow\mu\otimes\unif[k]$ weakly in $\cM_1(A^k\times[k])$. The map $(\bbw,j)\mapsto (w_j,j/k)$ is continuous and $\sample(\mu_n)$ is the push-forward of $\mu_n\otimes\unif[k]$ under this map. Hence continuity of $\sample$ follows from continuous mapping theorem.\\
		(ii) The map $\ios:A^k\times\bR^k\rightarrow A^k$ is continuous at points $(y_1,\dots,y_k,x_1,\dots,x_k)$ with $x_i\neq x_j$ for $i\neq j$. If $(Y_1,X_1),\dots,(Y_k,X_k)$ are iid $\sim\rho$ and the second marginal of $\rho$ is continuous, then $X_i\neq X_j$ for all $i\neq j$ almost surely. Since $\spread(\rho,k)=\law(\ios(Y_1,\dots,Y_k,X_1,\dots,X_k))$, the result again follows from continuous mapping theorem.
	\end{proof}

 	\begin{lemma}\label{lemma:22}
 		Let $\bw\in A^n$. 
 		\begin{enumerate}
 			\item[(i)] Let $k\leq n$ and $C_k$ be a constant with $d_k\leq C_k$. Then
 			\begin{equation*}
	 			\dW_k(\RSS(\bbw,k),\spread(\rho_{\bbw},k))\leq C_k\cdot \Big[1-\frac{n!}{(n-k)!n^k}\Big].
 			\end{equation*}
 			For each $k$ the upper bound converges to zero as $n=|\bw|\rightarrow\infty$.
 			\item[(ii)] Let $k, m\leq n$ and let $\bbS=(S_j)_{j\geq 1}$ be a $S$-process. Then
 			\begin{equation*}
 				\dW\Big(\sample(\RSS(\bbw,k)),\sample(\RSS(\bbw,m))\Big)\leq \bE\Big(d_{\bR}\Big(\frac{S_k^{-1}(1)}{k},\frac{S_m^{-1}(1)}{m}\Big)\Big).
 			\end{equation*}
 			As $\min\{k,m\}\rightarrow \infty$ the upper bound converges to zero.
 		\end{enumerate}
 		
 	\end{lemma}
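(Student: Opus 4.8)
The plan is to prove both bounds by constructing explicit couplings that realize the two measures on a single probability space and then estimating the expected distance.

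For (i), I would compare sampling \emph{without} replacement (which produces $\RSS(\bw,k)$) with sampling \emph{with} replacement (which produces $\spread(\rho_{\bw},k)$). Writing $\rho_{\bw}=\frac1n\sum_j\delta_{(w_j,j/n)}$, drawing $(Y_i,X_i)$ iid $\sim\rho_{\bw}$ amounts to drawing $J_1,\dots,J_k$ iid $\sim\unif[n]$ and setting $(Y_i,X_i)=(w_{J_i},J_i/n)$, so that $V:=\ios(w_{J_1},\dots,w_{J_k},J_1/n,\dots,J_k/n)\sim\spread(\rho_{\bw},k)$ is the sampled subword read off in position order. The key observation is that, conditioned on the event $E$ that $J_1,\dots,J_k$ are pairwise distinct, the unordered set $\{J_1,\dots,J_k\}$ is a uniform $k$-subset of $[n]$, whence $V\mid E\sim\RSS(\bw,k)$; moreover $\bP(E)=\frac{n!}{(n-k)!\,n^k}$. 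I would then couple $(W,V)$ by setting $W:=V$ on $E$ and, on $E^c$, drawing $W$ independently from $\RSS(\bw,k)$. A direct mixture computation shows $W\sim\RSS(\bw,k)$, while $\{W\ne V\}\subseteq E^c$, so
\begin{equation*}
\dW_k(\RSS(\bw,k),\spread(\rho_{\bw},k))\le\bE\,d_k(W,V)\le C_k\,\bP(E^c)=C_k\Big[1-\tfrac{n!}{(n-k)!\,n^k}\Big].
\end{equation*}
Since $\tfrac{n!}{(n-k)!\,n^k}=\prod_{i=0}^{k-1}(1-i/n)\to1$ as $n\to\infty$ for fixed $k$, the bound tends to $0$.

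For (ii), the crux is to realize $\sample(\RSS(\bw,k))$ and $\sample(\RSS(\bw,m))$ simultaneously so that the sampled \emph{letter} is identical in both and only the recorded relative position differs. Let $\bbU=(U_j)_{j\ge1}$ be the $U$-process corresponding to $\bbS$ and put $\Pi(i):=S_n^{-1}(i)=\#\{l\in[n]:U_l\le U_i\}$, the rank of $U_i$ among $U_1,\dots,U_n$; then $\Pi$ is a uniform permutation of $[n]$. I would mark position $\Pi(1)$ and choose the subset $\{\Pi(1),\dots,\Pi(k)\}$, a uniform $k$-subset of $[n]$. Because the prefix ordering of $\Pi(1),\dots,\Pi(k)$ agrees with that of $U_1,\dots,U_k$, the rank of the marked position $\Pi(1)$ within this subset equals $S_k^{-1}(1)$. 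Given the subset, the marked element is uniform over it, so its rank is $\unif[k]$ and independent of the induced subword; comparing with the definition of $\sample$ shows $(w_{\Pi(1)},S_k^{-1}(1)/k)\sim\sample(\RSS(\bw,k))$. The identical construction with $m$ in place of $k$ reuses the \emph{same} marked letter $w_{\Pi(1)}$ and gives $(w_{\Pi(1)},S_m^{-1}(1)/m)\sim\sample(\RSS(\bw,m))$. As the $A$-coordinates coincide, the $d_1$-term in the product metric vanishes and
\begin{equation*}
\dW\big(\sample(\RSS(\bw,k)),\sample(\RSS(\bw,m))\big)\le\bE\,d_{\bR}\Big(\tfrac{S_k^{-1}(1)}{k},\tfrac{S_m^{-1}(1)}{m}\Big).
\end{equation*}

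For the convergence, I would observe that $S_j^{-1}(1)/j=\frac1j\#\{l\in[j]:U_l\le U_1\}\to U_1$ almost surely as $j\to\infty$ by the strong law of large numbers applied conditionally on $U_1$. Hence both $S_k^{-1}(1)/k$ and $S_m^{-1}(1)/m$ converge a.s.\ to the common limit $U_1$, so $d_{\bR}(S_k^{-1}(1)/k,S_m^{-1}(1)/m)\to0$ a.s.\ as $\min\{k,m\}\to\infty$, and dominated convergence (recall $d_{\bR}$ was chosen bounded) yields the stated limit. The main obstacle is precisely the coupling in (ii): one must force the sampled letter to be shared across the two parameters while making the recorded relative rank exactly $S_k^{-1}(1)/k$. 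Identifying $\Pi(i)$ with the rank $S_n^{-1}(i)$ and marking $\Pi(1)$ is the device achieving this, and the careful step is verifying that $(w_{\Pi(1)},S_k^{-1}(1)/k)$ has \emph{exactly} the law $\sample(\RSS(\bw,k))$ rather than merely a comparable one.
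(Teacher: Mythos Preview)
Your proposal is correct and follows essentially the same approach as the paper. For (i) the coupling is identical: sample positions iid and switch to an independent draw from $\RSS(\bw,k)$ on the collision event. For (ii) the paper builds the subwords $\bbW_k,\bbW_m$ by successive erasure from $\bbW_n=\bw$ via the eraser-process corresponding to $\bbS$ and then uses the identity $W_{j,S_j^{-1}(1)}=W_{1,1}$ to see that the $A$-coordinate is shared; your construction is the same coupling viewed from the permutation side, since the surviving positions of the paper's $\bbW_k$ are exactly your $\{\Pi(1),\dots,\Pi(k)\}$ and the shared letter $W_{1,1}$ is precisely $w_{\Pi(1)}$.
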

 	\begin{proof}
 		(i) We construct a coupling of $\RSS(\bbw,k)$ and $\spread(\rho_{\bbw},k)$. Let $(Y_1,J_1),\dots,(Y_k,J_k)$ be iid $\sim\rho_{\bbw}$. Define $\bbX:=\ios(Y_1,\dots,Y_k,J_1,,\dots,J_k)$. By definition, $\bbX$ has law $\spread(\rho_{\bbw},k)$. Now let $\bbX^*\sim \RSS(\bbw,k)$ be independent of $\bbX$ and define 
 		\begin{equation*}
 			\bbX':=\begin{cases}
 				\bbX,&~\text{if}~J_i\neq J_j~\text{for all}~i\neq j\\
 				\bbX^*,&~\text{else}.
 			\end{cases}
 		\end{equation*}
 		It is easy to see that $\bbX'$ has law $\RSS(\bbw,k)$ and hence $(\bbX',\bbX)$ is a coupling of $\RSS(\bbw,k)$ and $\spread(\rho_{\bbw},k)$. We obtain
 		\begin{align*}
 		\dW_k(\RSS(\bbw,k),\spread(&\rho_{\bbw},k))\\
 		&\leq \bE(d_k(\bbX',\bbX))\\
 		&=\bE\Big(d_k(\bbX,\bbX)\cdot 1(\text{$J_i\neq J_j$ for all $i\neq j$})\Big)\\
 		&~~~~~~~~~~~~~~~~ + \bE\Big(d_k(\bbX^*,\bbX)\cdot 1(\text{$J_i= J_j$ for some $i\neq j$})\Big)\\
 		&\leq C_k\cdot \bP(\text{$J_i= J_j$ for some $i\neq j$})=C_k\cdot \Big[1-\frac{n!}{(n-k)!n^k}\Big].
 		\end{align*}
 		(ii) We again construct a coupling. Let $\bbS=(S_j)_{j\geq 1}$ be a $S$-process and let $\bbeta=(\eta_n)_{n\geq 1}$ be the eraser-process corresponding to $\bbS$. We define $\bbW_n:=\bbw\in A^n$ and by induction $\bbW_{j-1}:=\erase(\bbW_j,\eta_j)$ for $n\geq j\geq 2$. For all $1\leq j\leq N$ it holds that $\bbW_j=(W_{j,1},\dots,W_{j,j})$ has distribution $\RSS(\bbw,j)$. In particular, $(\bbW_k,\bbW_m)$ is a coupling of $\RSS(\bbw,k)$ and $\RSS(\bbw,m)$. Like in the proof of Lemma~\ref{lemma:1} it is easy to see that for each $1\leq j\leq n$ the permutation $S_j$ is independent from $\bbW_j$. Since $S_j$ is uniform on $\bS_j$, we get that $S_j^{-1}(1)\sim\unif[j]$ and hence $(W_{j,S_j^{-1}(1)},S_j^{-1}(1)/j)$ has distribution $\sample(\RSS(\bbw,j))$ for each $j$. We define 
 		\begin{equation*}
 		\bbX:=\Big(W_{k,S_k^{-1}(1)},\frac{S_k^{-1}(1)}{k}\Big)~~\text{and}~~\bbX':=\Big(W_{m,S_m^{-1}(1)},\frac{S_m^{-1}(1)}{m}\Big).
 		\end{equation*}
 		$(\bbX,\bbX')$ is a coupling of $\sample(\RSS(\bbw,k))$ and $\sample(\RSS(\bbw,m))$. Recall that we have defined $d((y,u),(y',u'))=d_1(y,y')+d_{\bR}(u,u')$. This yields
 		\begin{align*}
 		\dW\Big(\sample(\RSS(\bbw,k))&,\sample(\RSS(\bbw,m))\Big)\\
 		&\leq \bE(d(\bbX,\bbX'))\\
 		&=\bE(d_1(W_{k,S_k^{-1}(1)},W_{m,S_m^{-1}(1)}))~+~\bE\Big(d_{\bR}\Big(\frac{S_k^{-1}(1)}{k},\frac{S_m^{-1}(1)}{m}\Big)\Big).
 		\end{align*}
 		Algorithmic properties of $\erase$ yield $W_{j,S_j^{-1}(1)}=W_{1,1}$ for all $1\leq j\leq n$, and hence the first term in the upper bound is zero.\\
 		To see that the expectation converges to zero as $k\rightarrow\infty$, recall that $\frac{1}{k}S_k^{-1}(1)$ converges almost surely to $U_1$, where $\bbU$ is the $U$-process corresponding to $\bbS$. Because $d_{\bR}$ is bounded, the statement follows from dominated convergence.
 	\end{proof}


	\begin{lemma}\label{lemma:33}
		If $(\bbw_n)_{n\geq 1}$ is a $\RSS$-convergent sequence with limit $\bmu=(\mu_k)_{k\geq 1}$, then there exists some $\rho\in\cC(A)$ with
		$\sample(\mu_k)\rightarrow\rho$ as $k\rightarrow\infty$.
	\end{lemma}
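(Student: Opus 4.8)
The plan is to show that the sequence $(\sample(\mu_k))_{k\geq 1}$ is Cauchy in the complete metric space $(\cM_1(A\times\bR),\dW)$ and then identify its limit as an element of $\cC(A)$ by controlling second marginals. The crucial observation is that the upper bound in Lemma~\ref{lemma:22}(ii) does \emph{not} depend on the underlying word $\bbw$, so it survives the passage $n\to\infty$ along the approximating words $\bbw_n$.

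First I would fix $k,m\geq 1$ and interpose the words $\bbw_n$. Since $(\bbw_n)_{n\geq 1}$ is $\RSS$-convergent, $\RSS(\bbw_n,k)\to\mu_k$ weakly, and by continuity of $\sample$ (Lemma~\ref{lemma:11}(i)) this gives $\sample(\RSS(\bbw_n,k))\to\sample(\mu_k)$ in $\dW$ as $n\to\infty$, and likewise for $m$. Fixing a $S$-process $\bbS=(S_j)_{j\geq 1}$, for every $n$ with $|\bbw_n|\geq\max\{k,m\}$ the triangle inequality together with Lemma~\ref{lemma:22}(ii) yields
\begin{align*}
\dW(\sample(\mu_k),\sample(\mu_m))
&\leq \dW(\sample(\mu_k),\sample(\RSS(\bbw_n,k)))\\
&\quad + \bE\Big(d_{\bR}\Big(\frac{S_k^{-1}(1)}{k},\frac{S_m^{-1}(1)}{m}\Big)\Big)\\
&\quad + \dW(\sample(\RSS(\bbw_n,m)),\sample(\mu_m)).
\end{align*}
The middle term is independent of $n$, so letting $n\to\infty$ makes the first and third terms vanish and leaves
\begin{equation*}
\dW(\sample(\mu_k),\sample(\mu_m))\leq \bE\Big(d_{\bR}\Big(\frac{S_k^{-1}(1)}{k},\frac{S_m^{-1}(1)}{m}\Big)\Big),
\end{equation*}
which tends to $0$ as $\min\{k,m\}\to\infty$ by Lemma~\ref{lemma:22}(ii). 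Hence $(\sample(\mu_k))_{k\geq 1}$ is Cauchy, and by completeness of $(\cM_1(A\times\bR),\dW)$ it converges to some $\rho\in\cM_1(A\times\bR)$.

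It then remains to verify $\rho\in\cC(A)$, i.e. that the second marginal of $\rho$ is $\unif[0,1]$. By the definition of $\sample$, the probability measure $\sample(\mu_k)$ is the law of a pair whose second coordinate is $J/k$ with $J\sim\unif[k]$; thus its second marginal is uniform on $\{1/k,\dots,k/k\}$ and converges weakly to $\unif[0,1]$. Since the projection $(y,u)\mapsto u$ is continuous, the second marginal of $\rho=\lim_k\sample(\mu_k)$ equals $\unif[0,1]$, so $\rho\in\cC(A)$.

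The only genuine subtlety is the interchange of the two limits in the middle paragraph: the argument works precisely because the word-independence of the bound in Lemma~\ref{lemma:22}(ii) lets the middle term pass untouched through the limit $n\to\infty$, leaving an estimate that subsequently decays in $\min\{k,m\}$. Everything else is continuity of $\sample$, completeness of the Wasserstein space, and the elementary fact that the uniform grid measures on $[0,1]$ converge weakly to $\unif[0,1]$.
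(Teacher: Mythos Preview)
Your proof is correct and follows essentially the same route as the paper: the same triangle decomposition through $\sample(\RSS(\bbw_n,\cdot))$, the same appeal to Lemma~\ref{lemma:22}(ii) for the middle term, continuity of $\sample$ for the outer terms, completeness of $(\cM_1(A\times\bR),\dW)$, and the second-marginal check for $\rho\in\cC(A)$. Your passage to the limit $n\to\infty$ (exploiting that the bound in Lemma~\ref{lemma:22}(ii) is uniform in the word) is a slight streamlining of the paper's explicit $\varepsilon/3$ bookkeeping, but the underlying idea is identical.
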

	\begin{proof}
		Suppose $(\sample(\mu_k))_{k\geq 1}$ is a Cauchy sequence in $\cM_1(A\times\bR)$. By completeness of $\cM_1(A\times\bR)$ there exists a $\rho\in\cM_1(A\times\bR)$ with $\sample(\mu_k)\rightarrow\rho$. The second marginal of 
		$\sample(\mu_k)$ is given by $\law(J_k/k)$ with $J_k\sim\unif[k]$, which converges to $\unif[0,1]$ as $k\rightarrow\infty$. Since projection $(y,u)\mapsto u$ is continuous, the second marginal of $\rho$ is $\unif[0,1]$, hence $\rho\in\cC(A)$. So in order to prove the lemma we only need to show that $(\sample(\mu_k))_{k\geq 1}$ is a Chauchy sequence. 
		
		For all $k,m\in\bN$ and $n\geq\max\{k,m\}$ triangle inequality yields
		\begin{align*}
			\dW(\sample(\mu_k),&\sample(\mu_m))\\
			&\leq \dW(\sample(\mu_k),\sample(\RSS(\bbw_n,k)))\\
			&~~~~+\dW(\sample(\RSS(\bbw_n,k)),\sample(\RSS(\bbw_n,m)))\\
			&~~~~+\dW(\sample(\RSS(\bbw_n,m)),\sample(\mu_m)).
		\end{align*}
		Let $\varepsilon>0$. Since $\RSS(\bbw_n,j)\rightarrow \mu_j$ for each $j$ by assumption and since $\sample$ is continuous by Lemma~\ref{lemma:11} for each $j$ there exists an $N(j)\geq 1$ such that 
		$$\dW(\sample(\mu_j),\sample(\RSS(\bbw_n,j)))<\varepsilon/3~~\text{for all}~j\geq 1, n\geq N(j).$$
		By Lemma~\ref{lemma:22} (ii) the second term in the upper bound is bounded by a term that converges to zero as $\min\{k,m\}\rightarrow\infty$. Hence we can find $K\geq 1$ such that for all 
		$K\leq \min\{k,m\}\leq\max\{k,m\}\leq n$
		$$\dW(\sample(\RSS(\bbw_n,k)),\sample(\RSS(\bbw_n,m)))<\varepsilon/3$$
		holds. Now for all $\min\{k,m\}\geq K$ we obtain $\dW(\sample(\mu_k),\sample(\mu_m))<\varepsilon$ by using the above triangle inequality for some $n\geq \max\{N(k),N(m),\max\{k,m\}\}$, hence $(\sample(\mu_k))_{k\geq 1}$ is a Cauchy sequence.
	\end{proof}

	\begin{lemma}\label{lemma:44}
			Let $\rho\in\cC(A), k\geq 1$ and $U_1,\dots,U_k$ iid $\sim\unif[0,1]$ and $S_k:=\ps(U_1,\dots,U_k)$. Then
			\begin{equation*}
			\dW(\rho,\sample(\spread(\rho,k)))\leq \bE\Big(d_{\bR}\Big(U_1,\frac{S_k^{-1}(1)}{k}\Big)\Big),
			\end{equation*}
			The upper bound converges to zero as $k\rightarrow\infty$.
	\end{lemma}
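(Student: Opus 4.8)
The plan is to bound the left-hand side by producing a single explicit coupling of $\rho$ and $\sample(\spread(\rho,k))$ whose $A$-component is degenerate, so that only the real coordinate survives in the Wasserstein estimate. First I would take $(Y_1,U_1),\dots,(Y_k,U_k)$ iid $\sim\rho$ and set $S_k:=\ps(U_1,\dots,U_k)$, so that the word
\begin{equation*}
	\bbW:=\ios(Y_1,\dots,Y_k,U_1,\dots,U_k)=(Y_{S_k(1)},\dots,Y_{S_k(k)})
\end{equation*}
has law $\spread(\rho,k)$. Because $\rho\in\cC(A)$, the $U_i$ are iid $\unif[0,1]$ and $S_k$ is uniform on $\bS_k$.

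The key idea I would exploit is the identity $W_{S_k^{-1}(1)}=Y_{S_k(S_k^{-1}(1))}=Y_1$: the letter occupying the rank-one slot of $\bbW$ is exactly the first sampled letter $Y_1$. I would therefore take the sampling index to be $J:=S_k^{-1}(1)$ rather than an externally drawn uniform one. As a function of the uniform permutation $S_k$ this $J$ is $\unif[k]$-distributed, and, invoking the independence of $\bbW$ and $S_k$ (established exactly as in the proof of Lemma~\ref{lemma:1}), $J$ is independent of $\bbW$. Hence $(W_J,J/k)$ has law $\sample(\spread(\rho,k))$, while at the same time $(W_J,J/k)=(Y_1,S_k^{-1}(1)/k)$. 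Pairing this with $(Y_1,U_1)\sim\rho$ yields a coupling of $\rho$ and $\sample(\spread(\rho,k))$. Feeding it into the definition of $\dW$, using $d((y,u),(y',u'))=d_1(y,y')+d_{\bR}(u,u')$ and $d_1(Y_1,Y_1)=0$, collapses the $A$-term and leaves precisely $\bE(d_{\bR}(U_1,S_k^{-1}(1)/k))$, which is the asserted bound.

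For the convergence assertion I would realize $(U_j)_{1\le j\le k}$ as the first $k$ coordinates of a full $U$-process $\bbU=(U_j)_{j\ge 1}$, so that $S_k^{-1}(1)/k$ is the normalized rank of $U_1$ among $U_1,\dots,U_k$. By the law-of-large-numbers fact recorded in Section~3.2, namely $S_k^{-1}(1)/k\rightarrow U_1$ almost surely as $k\rightarrow\infty$, together with the boundedness of $d_{\bR}$, dominated convergence forces $\bE(d_{\bR}(U_1,S_k^{-1}(1)/k))\rightarrow 0$.

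I expect the only delicate point to be the independence of $\bbW$ and $S_k$; it is what licenses identifying $(W_J,J/k)$ with the law $\sample(\spread(\rho,k))$ for the \emph{specific} index $J=S_k^{-1}(1)$ that makes the letters coincide, instead of an independently drawn uniform index. Everything else reduces to unwinding definitions. Since this independence is already supplied by the argument of Lemma~\ref{lemma:1}, the obstacle is essentially resolved by an earlier result.
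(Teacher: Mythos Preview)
Your proposal is correct and follows essentially the same route as the paper: the paper also couples $(Y_1,U_1)\sim\rho$ with $(Y_1,S_k^{-1}(1)/k)$ and then invokes the almost-sure convergence $S_k^{-1}(1)/k\to U_1$ together with boundedness of $d_{\bR}$. You supply more detail than the paper does in justifying that $(Y_1,S_k^{-1}(1)/k)$ has law $\sample(\spread(\rho,k))$, namely the independence of $\bbW$ and $S_k$ borrowed from Lemma~\ref{lemma:1}; the paper simply asserts this distributional identity without further comment.
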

	\begin{proof}
		Let $(Y_1,U_1),\dots,(Y_k,U_k)$ be iid $\sim\rho$, $S_k:=\ps(U_1,\dots,U_k)$. $\bbX:=(Y_1,U_1)$ has distribution $\rho$ and 
		$\bbX':=(Y_1,\frac{1}{k}S_k^{-1}(1))$ has distribution $\sample(\spread(\rho,k))$. The upper bound equals $\bE(d(\bbX,\bbX'))$ and converges to zero because $S_k^{-1}(1)$ converges almost surely to $U_1$ as $k\rightarrow\infty$ and $d_{\bR}$ is bounded.
	\end{proof}

	We are now ready to proof Theorem~2.
	
	\begin{proof}[Proof of Theorem \ref{thm:main2}]
		Let $(\bw_n)_{n\geq 1}$ be a sequence of words with $|\bw_n|\rightarrow\infty$.\\
		
		(ii)$\Rightarrow$(i): Suppose $\rho_{\bw_n}\rightarrow\rho\in\cC(A)$. We want to show that $\RSS(\bw_n,k)\rightarrow \spread(\rho,k)$ for each $k$. Triangle inequality yields
		\begin{align*}
		\dW_k(\RSS(\bw_n,k),&\spread(\rho,k))\\
		&\leq \dW_k((\RSS(\bw_n,k),\spread(\rho_{\bw_n},k))+\dW_k(\spread(\rho_{\bw_n},k),\spread(\rho,k))
		\end{align*}
		The first term converges to zero as $n\rightarrow\infty$ by Lemma~\ref{lemma:22} (i) and the second term converges to zero since $\spread(\cdot,k)$ is continuous by Lemma~\ref{lemma:11} (ii).\\
		
		(i)$\Rightarrow$(ii): Suppose $\RSS(\bw_n,k)\rightarrow\mu_k$ for each $k$. By Lemma~\ref{lemma:33} $\sample(\mu_k)$ converges towards some $\rho\in\cC(A)$ as $k\rightarrow\infty$. We want to show that $\rho_{\bw_n}\rightarrow \rho$. By triangle inequality we have for all $k\leq |\bw_n|$
		$$\dW(\rho_{\bw_n},\rho)\leq \dW(\rho_{\bw_n},\sample(\RSS(\bw_n,k)))+\dW(\sample(\RSS(\bw_n,k)),\rho).$$
		Since $\sample$ is continuous by Lemma~\ref{lemma:11} (i) and $\RSS(\bbw_n,k)\rightarrow\mu_k$, passing to the limes superior yields 
		$$\limsup\limits_{n\rightarrow\infty}\dW(\rho_{\bw_n},\rho)\leq \limsup\limits_{n\rightarrow\infty}\dW(\rho_{\bw_n},\sample(\RSS(\bw_n,k)))+\dW(\sample(\mu_k),\rho)$$
		for each $k$. The second term on the right side converges to zero as $k\rightarrow\infty$, hence for each $\varepsilon>0$ we can find $K\geq 1$ such that for all $k\geq K$
		$$\limsup\limits_{n\rightarrow\infty}\dW(\rho_{\bw_n},\rho)\leq \limsup\limits_{n\rightarrow\infty}\dW(\rho_{\bw_n},\sample(\RSS(\bw_n,k)))+\varepsilon.$$
		Since $\RSS(\bw,|\bw|)=\delta_{\bw}$ for each word $\bw$, it holds that 
		\begin{equation*}
			\rho_{\bw_n}=\sample(\RSS(\bw_n,|\bw_n|)),
		\end{equation*}
		hence for each $k\geq K$ we obtain by Lemma~\ref{lemma:22} (ii)
		$$\limsup\limits_{n\rightarrow\infty}\dW(\rho_{\bw_n},\rho)\leq \limsup\limits_{n\rightarrow\infty}\bE\Big(d_{\bR}\Big(\frac{S_k^{-1}(1)}{k},\frac{S_{|\bw_n|}^{-1}(1)}{|\bw_n|}\Big)\Big)+\varepsilon,$$
		where $\bbS=(S_j)_{j\geq 1}$ is a $S$-process. Since $n^{-1}S_n^{-1}(1)\rightarrow U_1$ a.s. and $|\bw_n|\rightarrow\infty$ we get for each $k\geq K$
		$$\limsup\limits_{n\rightarrow\infty}\dW(\rho_{\bw_n},\rho)\leq \bE\Big(d_{\bR}\Big(\frac{S_k^{-1}(1)}{k},U_1\Big)\Big)+\varepsilon.$$
		Now letting $k\rightarrow\infty$ yields $\limsup\limits_{n\rightarrow\infty}\dW(\rho_{\bw_n},\rho)\leq \varepsilon.$
		Since $\varepsilon>0$ was arbitrary, we get $\lim\limits_{n\rightarrow\infty}\dW(\rho_{\bw_n},\rho)=0.$\\

		We have shown that for each sequence $(\bbw_n)_{n\geq 1}$ with $|\bbw_n|\rightarrow\infty$ it holds that
		\begin{equation*}
			\rho_{\bbw_n}\rightarrow \rho~~~\Longrightarrow~~~\RSS(\bbw_n,k)\rightarrow \spread(\rho,k)~\text{for each $k$}
		\end{equation*}
		and 
		\begin{equation*}
			\RSS(\bbw_n,k)\rightarrow\mu_k~\text{for each $k$}~~~\Longrightarrow~~~\rho_{\bw_n}\rightarrow \lim_{k\rightarrow\infty}\sample(\mu_k).
		\end{equation*}
		
		We show that $\RSSB A$ and $\cC(A)$ are homeomorphic. For each $\rho\in\cC(A)$ we have $h(\rho):=(\spread(\rho,k))_{k\geq 1}\in\RSSB A$, see (\ref{eq:spreadrho}). Moreover, if 
		$\bmu=(\mu_k)_{k\geq 1}\in\RSSB A$ then $i(\bmu):=\lim_{k\rightarrow\infty}\sample(\mu_k)\in\cC(A)$ (see Lemma~\ref{lemma:33}). To prove Theorem~\ref{thm:main2} we need to show that
		$$h:\cC(A)\rightarrow \RSSB A,~~~\rho\mapsto (\spread(\rho,k))_{k\geq 1}$$
		is a homeomorphism and the inverse map is given by 
		$$i:\RSSB A\rightarrow \cC(A),~~~(\mu_k)_{k\geq 1}\mapsto \lim\limits_{k\rightarrow\infty}\sample(\mu_k).$$ 
		Let $\rho\in\cC(A)$. By Lemma \ref{lemma:44} we have that $\rho=\lim_{k\rightarrow\infty}\sample(\spread(\rho,k))$, hence $\rho=i(h(\rho))$. 
		Now let $\bmu\in\RSSB A$. By definition there is a sequence $(\bbw_n)_{n\geq 1}$ with $\RSS(\bbw_n,k)\rightarrow\mu_k$ for each $k$. We have shown above that $\rho_{\bw_n}$ converges towards 
		$\rho=i(\bmu)$ and that $\mu_k=\spread(\rho,k)$ for each $k$, hence $\bmu=h(i(\bmu))$, i.e. $h=i^{-1}$ and $i=h^{-1}$.\\
		
		The only thing left to show is that $h$ and $h^{-1}=i$ are continuous, i.e. we need to show that for all $\rho^n,\rho\in\cC(A)$
		\begin{equation*}
			\rho^n\rightarrow\rho~~~~\Longleftrightarrow~~~~\spread(\rho^n,k)\rightarrow\spread(\rho,k)~\text{for each}~k,
		\end{equation*}
		where '$\Longrightarrow$' follows directly from continuity of $\spread(\cdot,k)$, see Lemma~\ref{lemma:11} (i). We assume that the right hand side holds. Using triangle inequality yields for each $k$
		\begin{align*}
			\dW(\rho^n,\rho)\leq \dW(\rho^n,\sample&(\spread(\rho^n,k)))\\
												   &+\dW(\sample(\spread(\rho^n,k)),\sample(\spread(\rho,k)))\\
												   &+\dW(\sample(\spread(\rho,k)),\rho)
		\end{align*}
		Lemma~\ref{lemma:44} yields that the first and third term can be bounded by a term that converges to zero as $k\rightarrow\infty$. That is for each $\varepsilon>0$ we can find $K$ such that 
		for each $k\geq K$ it holds that 
		\begin{align*}
		\dW(\rho^n,\rho)\leq \varepsilon + \dW(\sample(\spread(\rho^n,k)),\sample(\spread(\rho,k))).
		\end{align*}
		Now $\spread(\rho^n,k)\rightarrow\spread(\rho,k)$ for each $k$ by assumption. Since $\sample$ is continuous by Lemma~\ref{lemma:11} we obtain $\limsup_{n\rightarrow\infty}\dW(\rho^n,\rho)\leq \varepsilon$ and hence $\varepsilon>0$ was arbitrary, $\rho^n\rightarrow\rho$.
	\end{proof}

	\section{Proof of Theorem 3}
	
	Before we proof that the backward filtration $\bbF$ generated by an ergodic GEWP is of product-type (Theorem~\ref{thm:main3}), we answer the question in what situations $\bbF$ is already generated by the eraser-process $\bbeta$. The answer was presented in Proposition~\ref{prop:filtrationgeneratedbyusual}: $\bbeta$ generates $\bbF$ iff $\law(\bbW,\bbeta)=\GL(\rho)$ and $\rho$ is of the form $\rho=\law(f(U),U)$ for some measurable $f:[0,1]\rightarrow A$ and $U\sim\unif[0,1]$. 
	\begin{proof}[Proof of Proposition~\ref{prop:filtrationgeneratedbyusual}]
		Let $(\bbW,\bbeta)$ be an ergodic GEWP with $\law(\bbW,\bbeta)=\GL(\rho), \rho\in\cC(A)$. Let $\bbU$ be the $U$-process corresponding to $\bbeta$ and let $Y_j:=W_{j,\eta_j}, j\geq 1$, hence 
		$\bbW_n=\ios(Y_1,\dots,Y_n,U_1,\dots,U_n)$ almost surely.\\
		Suppose $\rho=\law(f(U),U)$, i.e. there is some measurable $f:[0,1]\rightarrow A$ such that $Y_j=f(U_j)$ almost surely for all $j$. For each $k$ let $(U_{1:k},\dots,U_{k:k}):=\os(U_1,\dots,U_k)$ be the order statistics of $U_1,\dots,U_k$. In this case it holds that $\bbW_k=(f(U_{1:k}),\dots,f(U_{k:k}))$ almost surely and hence $\sigma(\bbW_k)\subas \sigma(U_{1:k},\dots,U_{k:k})$
		for each $k$. Proposition~\ref{prop:eta} yields $\sigma(U_{1:k},\dots,U_{k:k})\subas \sigma(\eta_n:n\geq k+1)$ and hence $\cF_k\subas \sigma(\eta_n:n\geq k+1)$, i.e. $\bbeta$ generates~$\bbF$.\\
		Now assume $\bbeta$ generates $\bbF$. We want to show that there is some measurable function $f$ with $Y_j=f(U_j)$ almost surely for all $j$. Since $(Y_j,U_j)_{j\geq 1}$ are iid it is enough to show that $Y_1=f(U_1)$ a.s. for some measurable $f$ which is true iff $\cL(Y_1|U_1)$ is almost surely a dirac measure. Since $\bbW_1=Y_1$ and $\sigma(\bbeta)\as\sigma(\bbU)$ by assumption we have that $Y_1$ is a.s. measurable with respect to $\bbU$, hence $\cL(Y_1|\bbU)=\cL(Y_1|U_1,U_2,\dots)$ is almost surely a dirac measure. Since $(Y_1,U_1)$ and $(U_2,U_3,\dots)$ are independent, $\law(Y_1|\bbU)=\law(Y_1|U_1)$ almost surely.
	\end{proof}
	In particular, Proposition~\ref{prop:filtrationgeneratedbyusual} yields that ergodic GEWPs of the form $\GL(\law(f(U),U))$ generate product-type filtrations. We now proof that this is true for every ergodic GEWP over a Borel spaces alphabet. As we have already explained, our proof proceeds as follows: First we prove it for finite alphabets, then for $A=[0,1]$ and then for general Borel spaces.
	
	\subsection{Case of Finite Alphabets}
	
	Let $A=\{1,2,\dots,m\}$ for some $m\geq 2$ and $(\bbW,\bbeta)$ be an ergodic GEWP over $A$. Our aim is to construct a process of local innovations $\bbeta^*$ that generates $\bbF$. The next lemma gives a general method to construct new processes of local innovations for any poly-adic-filtration:
	\begin{lemma}[see \cite{gael}, Lemma 2.1]\label{lemma:ll}
		Let $\bbF$ be a poly-adic filtrations and let $\bbeta$ be a process of local innovations with $\eta_n\sim\unif([n])$ for each $n$. For each $n$ let $\tau_n$ be a $\cF_n$-measurable $\bS_{n}$-valued random permutation. Then $\bbeta^*=(\eta^*_n)_{n\geq 1}$ defined by $\eta^*_n:=\tau_n(\eta_n)$ is a process of local innovations for $\bbF$.
	\end{lemma}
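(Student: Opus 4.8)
The plan is to verify, for each $n$, the three defining properties of a process of local innovations for $\bbeta^*$: (a) $\eta^*_n$ is independent from $\cF_n$, (b) $\cF_n\as\sigma(\eta^*_{n+1})\vee\cF_{n+1}$, and (c) $\eta^*_n$ is uniform on a finite set. Since $\tau_n$ takes values in $\bS_n$ and $\eta_n\in[n]$, we have $\eta^*_n=\tau_n(\eta_n)\in[n]$, so the relevant finite set is $[n]$. My strategy is to obtain (a) and (c) simultaneously by proving the stronger statement
\begin{equation*}
	\cL(\eta^*_n\mid\cF_n)=\unif([n])\quad\text{almost surely.}
\end{equation*}
Indeed, once the conditional law is known to equal almost surely the non-random uniform law on $[n]$, independence of $\eta^*_n$ and $\cF_n$ follows at once, and taking expectations gives $\eta^*_n\sim\unif([n])$.

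For this conditional-law computation I would condition on $\cF_n$ and exploit that $\tau_n$ is $\cF_n$-measurable while $\eta_n$ is independent of $\cF_n$ with $\eta_n\sim\unif([n])$. For a \emph{fixed} permutation $\pi\in\bS_n$ and any $j\in[n]$ one has $\bP(\pi(\eta_n)=j)=\bP(\eta_n=\pi^{-1}(j))=1/n$, since $\pi$ is a bijection of $[n]$. Substituting the $\cF_n$-measurable $\tau_n$ for $\pi$ and using $\eta_n\perp\cF_n$ yields, for every $j\in[n]$,
\begin{equation*}
	\bP(\eta^*_n=j\mid\cF_n)=\bE\big(1(\tau_n(\eta_n)=j)\mid\cF_n\big)=\tfrac1n\quad\text{a.s.},
\end{equation*}
which is the desired identity. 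I expect this step---rigorously justifying the substitution of the random, $\cF_n$-measurable permutation $\tau_n$ into the deterministic computation---to be the only genuinely delicate point; it is precisely the fact that randomized relabeling by a permutation measurable with respect to the conditioning $\sigma$-field preserves both uniformity and independence.

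It remains to establish the informational property (b). Here I would use that $\bbeta$ is already a process of local innovations, so $\cF_n\as\sigma(\eta_{n+1})\vee\cF_{n+1}$, and reduce (b) to the exact identity $\sigma(\eta^*_{n+1})\vee\cF_{n+1}=\sigma(\eta_{n+1})\vee\cF_{n+1}$. The inclusion $\subseteq$ is immediate, since $\eta^*_{n+1}=\tau_{n+1}(\eta_{n+1})$ is a measurable function of $\eta_{n+1}$ and the $\cF_{n+1}$-measurable permutation $\tau_{n+1}$. For the reverse inclusion, the key point is that $\tau_{n+1}$ being $\cF_{n+1}$-measurable forces its inverse $\tau^{-1}_{n+1}$ to be $\cF_{n+1}$-measurable as well, so that $\eta_{n+1}=\tau^{-1}_{n+1}(\eta^*_{n+1})$ is recoverable from $\eta^*_{n+1}$ together with $\cF_{n+1}$, giving $\sigma(\eta_{n+1})\subseteq\sigma(\eta^*_{n+1})\vee\cF_{n+1}$. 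Combining the two inclusions yields the claimed equality, and substituting into $\cF_n\as\sigma(\eta_{n+1})\vee\cF_{n+1}$ delivers (b). This completes the verification that $\bbeta^*$ is a process of local innovations for $\bbF$.
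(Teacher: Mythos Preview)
Your proof is correct and follows essentially the same route as the paper's. The paper verifies independence and uniformity by the explicit joint computation $\bP(\{\eta^*_n=j\}\cap B)=\sum_{\pi\in\bS_n}\bP(\eta_n=\pi^{-1}(j))\,\bP(B\cap\{\tau_n=\pi\})=\tfrac1n\bP(B)$, which is exactly your conditional-law identity $\bP(\eta^*_n=j\mid\cF_n)=1/n$ unpacked over the finitely many values of $\tau_n$; and for the informational property the paper uses the same inverse-permutation argument $\eta_n=\tau_n^{-1}(\eta^*_n)$ that you give.
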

	\begin{proof}
		Let $\tau^{-1}_n$ be the inverse of $\tau_n$. Because $\sigma(\tau_n)\subseteq\cF_n$ it follows that $\sigma(\tau^{-1}_n)\subseteq\cF_n$. Because $\eta_n=\tau_n^{-1}(\eta^*_n)$ it follows that $\sigma(\eta_n)\subset\sigma(\eta^*_n)\vee\sigma(\tau^{-1}_n)\subseteq\sigma(\eta^*_n)\vee\cF_n$. Since $\cF_{n-1}\as\sigma(\eta_n)\vee\cF_{n}$ it follows that $\cF_{n-1}\subas\sigma(\eta^*_n)\vee\cF_n$. Now we need to show that $\eta^*_n$ is uniform on $[n]$ and independent from $\cF_n$: Let $B\in\cF_n$ and $j\in[n]$. It follows that 
		\begin{align*}
			\bP(\{\eta^*_n=j\}\cap B)&=\sum_{\pi\in\bS_{n}}\bP(\{\eta_n=\pi^{-1}(j)\}\cap B\cap\{\tau_n=\pi\})\\
								     &=\sum_{\pi\in\bS_{n}}\bP(\eta_n=\pi^{-1}(j))\cdot\bP(B\cap\{\tau_n=\pi\}),
		\end{align*}
		where the second equality holds because $B\cap\{\tau_n=\pi\}\in\cF_n$ and $\eta_n$ is independent from $\cF_n$. Because $\eta_n\sim\unif[n]$ it holds that $\bP(\eta_j=\pi^{-1}(j))=1/n$ for each $j$ and hence $\bP(\{\eta^*_n=j\}\cap B)=\frac{1}{n}\bP(B)$, so $\eta^*_n$ is uniform on $[n]$ and independent from $\cF_n$.
	\end{proof}
	
	We construct generating processes of local innovations based on the following 
	
	\begin{definition}\label{constr:umlabeln}
		Let $\bw=(w_1,\dots,w_n)\in A^n$. For each $i\in A$ let $n_i$ count the numbers of the letter $i$ in $\bw$, i.e. $n_i=\#\{1\leq k\leq n:w_k=i\}$. Let $\bw^*:=1^{n_1}2^{n_2}\dots m^{n_m}\in A^n$. The permutation $\pi_{\bw}\in\bS_n$ is defined by the requirements
		\begin{enumerate}
			\item[i)] $(\bw_{\pi_{\bw}^{-1}(1)},\dots,\bw_{\pi_{\bw}^{-1}(n)})=\bw^*$
			\item[ii)] $\pi_{\bw}^{-1}$ is increasing on each of the sets $\{1,\dots,n_1\},\{n_1+1,\dots,n_1+n_2\},\dots,\{n_1+\dots+n_{m-1},\dots,n\}$.
		\end{enumerate} 
		Figure \ref{fig:AP} shows an example.
	\end{definition}
	\begin{figure}[H]
		\includegraphics[scale=0.28]{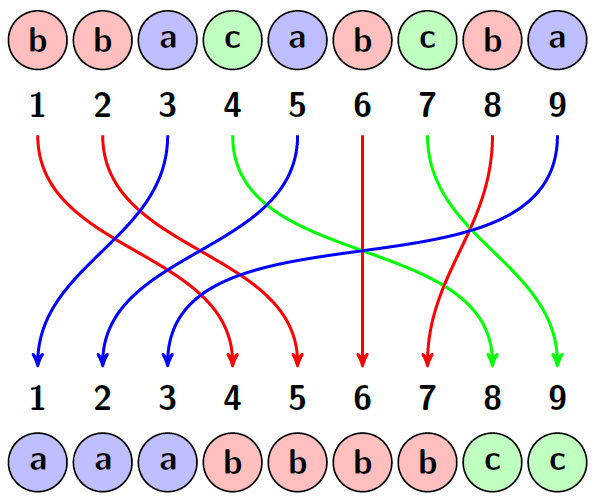}
		\caption{The alphabet $A=\{a,b,c\}=\{1,2,3\}$, above the word $\bw=(b,b,a,c,a,b,c,b,a)\in A^9$. An arrow $i\rightarrow j$ means $\pi_{\bw}(i)=j$.}\label{fig:AP}
	\end{figure}

	If $(\bbW,\bbeta)$ is a GEWP then $\bbeta^*$ with $\eta^*_n=\pi_{\bbW_n}(\eta_n)$ is a process of local innovations for $\bbF$ due to Lemma~\ref{lemma:ll}. 
	
	\begin{proposition}\label{prop:keyprop}
		If $(\bbW,\bbeta)$ is an ergodic GEWP over the finite alphabet $A$ then $\bbeta^*$ generates $\cF$ almost surely. In particular, the filtration $\bbF$ generated by any ergodic GEWP over a finite alphabet is of product-type.
	\end{proposition}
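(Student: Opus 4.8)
The plan is to realise $\bbeta^*$ as the eraser-process of an explicit exchangeable linear order and then match its graded information with $\bbF$. By Theorem~\ref{thm:main1} I may assume $\law(\bbW,\bbeta)=\GL(\rho)$ for some $\rho\in\cC(A)$, realised through an iid sequence $(Y_j,U_j)_{j\ge1}\sim\rho$ with $\bbW_n=\ios(Y_1,\dots,Y_n,U_1,\dots,U_n)$ and $\bbeta$ the eraser-process of the $U$-order $L$; here $\cF_n\as\sigma(\bbW_n)\vee\sigma(Y_k,\eta_k:k\ge n+1)$, since $\bbW_k$ is obtained from $\bbW_{k-1}$ by re-inserting the letter $Y_k=W_{k,\eta_k}$ at slot $\eta_k$. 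Evaluating $\pi_{\bbW_n}(\eta_n)$ from Definition~\ref{constr:umlabeln} gives
\begin{equation*}
	\eta^*_n=\#\{k\le n:Y_k<Y_n\}+\#\{k\le n:Y_k=Y_n,\,U_k\le U_n\},
\end{equation*}
so $\eta^*_n$ is the rank of $n$ for the lexicographic order $L^*$ on $\bN$ that orders first by the letter $Y$ and breaks ties by $U$. Since $(Y_j,U_j)_{j\ge1}$ is iid, $L^*$ is an exchangeable linear order, hence (Proposition~\ref{thm:exchlin}) $\bbeta^*$ is a genuine eraser-process and Section~3.2 supplies the corresponding $U^*$-process $\bbU^*$ with $\sigma(\bbeta^*)\as\sigma(L^*)\as\sigma(\bbU^*)$.

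Next I would compute $\bbU^*$ explicitly. Writing $g(y,u):=\rho(\{1,\dots,y-1\}\times[0,1])+\rho(\{y\}\times[0,u])$ for the lexicographic c.d.f., the law of large numbers gives $U^*_i=g(Y_i,U_i)$, and $\bbU^*$ is iid $\unif[0,1]$. The heart of the argument is that $g$ is almost surely invertible: because $\rho\in\cC(A)$ has uniform second marginal, each conditional law $\law(U_i\mid Y_i=y)$ is atomless, so for fixed $y$ the map $g(y,\cdot)$ is a continuous c.d.f.-transform and the quantile identity yields $U_i=F_{Y_i}^{-1}(F_{Y_i}(U_i))$ almost surely, where $F_y$ is the conditional c.d.f.; reading off $Y_i$ from the block in which $U^*_i$ lies then gives $\sigma(U^*_i)\as\sigma(Y_i,U_i)$ for every $i$. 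Thus passing to the lexicographic order and then to its c.d.f.-values is reversible: $\bbeta^*$ loses no information.

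With this in hand the two filtrations are matched level by level. Applying Proposition~\ref{prop:eta} to $\bbeta^*$ gives $\sigma(\eta^*_k:k\ge n+1)\as\sigma(U^*_{1:n},\dots,U^*_{n:n})\vee\sigma(U^*_k:k\ge n+1)$. By invertibility of $g$, the order-statistics block recovers the multiset $\{(Y_k,U_k):k\le n\}$ — hence $\bbW_n$, by sorting the pairs according to $U$, together with the values $U_{1:n},\dots,U_{n:n}$ — while the tail block recovers the labelled pairs $(Y_k,U_k)$ for $k\ge n+1$. From these one reconstructs $\eta_k=\#\{j\le k:U_j\le U_k\}$ for every $k\ge n+1$ by counting, first inside the known first-$n$ multiset and then among the known future $U$-values. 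Since $\cF_n\as\sigma(\bbW_n)\vee\sigma(Y_k,\eta_k:k\ge n+1)$, this proves $\cF_n\subas\sigma(\eta^*_k:k\ge n+1)$. The reverse inclusion $\sigma(\eta^*_k:k\ge n+1)\subas\cF_n$ is automatic, because $\bbeta^*$ is a process of local innovations for $\bbF$ by Lemma~\ref{lemma:ll}. Hence $\cF_n\as\sigma(\eta^*_k:k\ge n+1)$ for all $n$; as $\eta^*_n$ is independent of $\cF_n\supseteq\sigma(\eta^*_k:k\ge n+1)$, the $\eta^*_n$ are mutually independent, so $\bbeta^*$ generates $\bbF$ and the filtration is of product-type.

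The routine parts are the computation of $\eta^*_n$ and the elementary counting reconstructions of $\bbW_n$ and of the $\eta_k$. The main obstacle is the invertibility lemma for $g$: it is exactly what guarantees that the \emph{full} $U$-order $L$ — not merely the lexicographic order $L^*$ — can be recovered from $\bbeta^*$, and it is here that the constraint $\rho\in\cC(A)$ (atomless conditionals) is used. I expect the careful statement $\sigma(U^*_i)\as\sigma(Y_i,U_i)$, together with checking that the multiset/tail decomposition of Proposition~\ref{prop:eta} genuinely reassembles $\cF_n$, to be the only delicate points; the second and third steps of Theorem~\ref{thm:main3} (reduction of $A=[0,1]$ to finite alphabets by partitioning, and of general Borel $A$ via a measurable push-forward) then follow as in \cite{laurent}.
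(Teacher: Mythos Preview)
Your proof is correct and is essentially the paper's argument run in the opposite direction: where the paper first builds an auxiliary $U$-process $\bbV$ and then defines $(Y_j,U_j)=(f(V_j),c(V_j))$ via the quantile maps of Definition~\ref{def:constr}, you instead start from $(Y_j,U_j)$ and apply the lexicographic c.d.f.\ $g$ to produce $U^*_j$; your invertibility lemma for $g$ is precisely the statement that $g$ is the a.s.\ inverse of the paper's $(f,c)$, so your $U^*_j$ coincides with the paper's $V_j$, and your identification of $L^*$ as the lexicographic order is the content of the computation $\pi_{\bbW_n}=(S_n^*)^{-1}\circ S_n$ in Lemma~\ref{lemma:work}. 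Both arguments then finish identically via Proposition~\ref{prop:eta}.
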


	We prove this proposition by an explicit construction of ergodic GEWPs involving $\rho$.

	\begin{definition}\label{def:constr}
		For $\rho\in\cC(A), i\in A=\{1,\dots,m\}$ let
		\begin{enumerate}
			\item $\alpha_i=\rho(\{i\}\times [0,1])$ and $\alpha=(\alpha_1,\dots,\alpha_m)$.
			\item $f:[0,1)\rightarrow A$ with $f(v)=i:\Leftrightarrow \alpha_1+\dots+\alpha_{i-1}\leq v<\alpha_1+\dots+\alpha_i$.
			\item $\rho_i:=\rho(\{i\}\times\cdot)$, i.e. $\rho_i$ is a measure on $[0,1]$. 
			\item $F_i^{-1}$ inverse quantile function of the measure $\rho_i$.
			\item $c:[0,1)\rightarrow[0,1]$ $c(v):=F^{-1}_{f(v)}(v-\alpha_1-\dots-\alpha_{f(v)-1})$.
		\end{enumerate}
	\end{definition}
	It is straightforward to check the following:	
	\begin{lemma}
		$V\sim\unif[0,1]\Longrightarrow (f(V),c(V))\sim\rho$.
	\end{lemma}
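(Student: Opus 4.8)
The plan is to show that the two probability measures $\cL(f(V),c(V))$ and $\rho$ on $A\times\bR$ coincide. Since $\rho\in\cC(A)$ is concentrated on $A\times[0,1]$ (its second marginal is $\unif[0,1]$, so $\sum_{i\in A}\alpha_i=1$) and the same is true of $\cL(f(V),c(V))$ by construction, it suffices to check agreement on an intersection-stable generator. I would use the rectangles $\{i\}\times[0,t]$, $i\in A$, $t\in[0,1]$, which together with the whole space form such a generator; concretely, the goal reduces to verifying
\begin{equation*}
\bP\big(f(V)=i,\,c(V)\leq t\big)=\rho\big(\{i\}\times[0,t]\big)=\rho_i\big([0,t]\big)
\end{equation*}
for every $i\in A$ and $t\in[0,1]$.

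Second, I would fix $i$ and unwind the definitions. On the event $\{f(V)=i\}$ one has $V\in[\alpha_1+\dots+\alpha_{i-1},\,\alpha_1+\dots+\alpha_i)$, an interval of Lebesgue length $\alpha_i$, and there $c(V)=F_i^{-1}\big(V-\alpha_1-\dots-\alpha_{i-1}\big)$. As $V\sim\unif[0,1]$, the shifted variable $W:=V-(\alpha_1+\dots+\alpha_{i-1})$ restricted to this event is distributed like Lebesgue measure on $[0,\alpha_i)$. Hence the left-hand side above equals the Lebesgue measure of the set $\{w\in[0,\alpha_i):F_i^{-1}(w)\leq t\}$, and the entire lemma reduces to the single statement that the push-forward of Lebesgue measure on $[0,\alpha_i)$ under $F_i^{-1}$ equals $\rho_i$.

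Third — the one step that genuinely requires care — is the Galois identity for the generalized inverse. Reading $F_i^{-1}$ as the (unnormalized) inverse quantile function of $\rho_i$, i.e.\ $F_i^{-1}(w)=\inf\{s\in[0,1]:\rho_i([0,s])\geq w\}$ for $w\in[0,\alpha_i]$, the monotonicity and right-continuity of $s\mapsto\rho_i([0,s])$ yield
\begin{equation*}
F_i^{-1}(w)\leq t\quad\Longleftrightarrow\quad w\leq\rho_i\big([0,t]\big).
\end{equation*}
Granting this equivalence, the Lebesgue measure of $\{w\in[0,\alpha_i):F_i^{-1}(w)\leq t\}$ is $\min\{\alpha_i,\rho_i([0,t])\}=\rho_i([0,t])$, using $\rho_i([0,t])\leq\rho_i([0,1])=\alpha_i$, which is exactly the desired value; combining the contributions over $i\in A$ then recovers $\rho$.

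The main obstacle is thus not any estimate but the bookkeeping around the generalized inverse. One must fix the normalization convention so that $F_i^{-1}$ is defined on $[0,\alpha_i]$ (matching the range of its argument $v-\alpha_1-\dots-\alpha_{f(v)-1}$) rather than on $[0,1]$, and one must check the displayed equivalence also in the presence of atoms of $\rho_i$ (where $F_i^{-1}$ is locally constant) and of flat stretches of the distribution function (where $F_i^{-1}$ jumps). The standard $\inf$-based argument, invoking right-continuity to get $\rho_i([0,F_i^{-1}(w)])\geq w$, dispatches both cases uniformly, so no separate treatment of ties is needed.
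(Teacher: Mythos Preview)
Your argument is correct: reducing to the $\pi$-system $\{\{i\}\times[0,t]:i\in A,\,t\in[0,1]\}$ and then invoking the Galois equivalence $F_i^{-1}(w)\leq t\Leftrightarrow w\leq\rho_i([0,t])$ for the (unnormalized) generalized inverse is exactly the standard verification, and your handling of the normalization (domain $[0,\alpha_i)$) and of possible atoms/flat stretches is sound. The paper itself gives no proof of this lemma---it merely states that it is ``straightforward to check''---so there is no alternative approach to compare against; what you wrote is precisely the routine computation the author had in mind.
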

	
	\begin{lemma}\label{lemma:work}
		Let $\rho\in\cC(A)$ and $f, c$ be defined as in Definition~\ref{def:constr} and $\bbV=(V_j)_{j\geq 1}$ be a $U$-process. For each $n$ define 
		\begin{enumerate}
			\item $(Y_n,U_n):=(f(V_n),c(V_n))$.
			\item $S_n:=\ps(U_1,\dots,U_n)$ and $S_n^*:=\ps(V_1,\dots,V_n)$.
			\item $\eta_n:=S_n^{-1}(n)$ and $\eta^*_n:=(S^*_n)^{-1}(n)$.
			\item $\bbW_n:=\ios(Y_1,\dots,Y_n,U_1,\dots,U_n)$.
		\end{enumerate}
		Then for each $n$ it holds that
		\begin{enumerate}
			\item[(i)] $\eta^*_n=\pi_{\bbW_n}(\eta_n)$.
			\item[(ii)] $\bbW_n$ is a.s. measurable with respect to $\sigma(V_{1:n},\dots,V_{n:n})=\sigma(\os(V_1,\dots,V_n))$.
		\end{enumerate} 
	\end{lemma}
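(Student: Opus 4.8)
The plan is to reduce everything to the almost sure event on which the sorting keys have no ties, and then read off both identities by blockwise counting. Writing $\beta_0:=0$ and $\beta_i:=\alpha_1+\dots+\alpha_i$, so that $I_i=[\beta_{i-1},\beta_i)$ is the block on which $f\equiv i$, the decisive observation — and the reason the statement only claims ``a.s.'' — is the following. Since $\rho\in\cC(A)$ has second marginal $\unif[0,1]$, each $\rho_i=\rho(\{i\}\times\cdot)$ satisfies $\rho_i\le\unif[0,1]$ as measures and is therefore atomless; hence its quantile function $F_i^{-1}$ is strictly increasing, and so $c$ is \emph{strictly} increasing on each block $I_i$. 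Two points of the same block thus never share a $c$-value unless their $V$-values agree, while for points of different blocks the coincidence locus $\{(v,v'):c(v)=c(v')\}$ is Lebesgue-null. As the $V_j$ are iid with a continuous law, it follows that almost surely $V_1,\dots,V_n$ are pairwise distinct and $U_1,\dots,U_n$ are pairwise distinct. I would fix this full-measure event for the remainder; establishing it is the only real obstacle, everything afterwards being bookkeeping.

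For (ii), on this event the operation $\ios$ is insensitive to a relabeling of its arguments, because its sorting key $(U_1,\dots,U_n)$ has no ties and sorting a tie-free list depends only on the underlying multiset. Hence $\bbW_n$ depends on $(Y_1,U_1),\dots,(Y_n,U_n)$ only through the multiset $\{(f(V_j),c(V_j)):j\le n\}$, which in turn is a function of the multiset $\{V_j:j\le n\}$, i.e. of $\os(V_1,\dots,V_n)$. This gives the asserted $\sigma(V_{1:n},\dots,V_{n:n})$-measurability of $\bbW_n$.

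For (i), set $i:=f(V_n)=Y_n=(\bbW_n)_{\eta_n}$. Since $\pi_{\bbW_n}$ sorts the letters of $\bbW_n$ stably, its value at the letter-$i$ position $\eta_n$ is $\pi_{\bbW_n}(\eta_n)=\#\{j\le n:f(V_j)<i\}+\#\{l\le\eta_n:(\bbW_n)_l=i\}$. Translating positions of $\bbW_n$ back to data points via $S_n$ and using that there are no $U$-ties, the second count equals $\#\{j\le n:f(V_j)=i,\ U_j\le U_n\}$, and since $c$ is strictly increasing on block $i$ this is $\#\{j\le n:f(V_j)=i,\ V_j\le V_n\}$; the first count is $\#\{j:V_j<\beta_{i-1}\}$ because $f(V_j)<i\iff V_j<\beta_{i-1}$. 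On the other side $\eta_n^*=\#\{j\le n:V_j\le V_n\}$, and splitting this set according to the block of $V_j$ and using that $V_n\in I_i$ forces $V_j<\beta_{i-1}\Rightarrow V_j<V_n$ and $V_j\ge\beta_i\Rightarrow V_j>V_n$, one obtains exactly the same two blockwise contributions. Hence $\eta_n^*=\pi_{\bbW_n}(\eta_n)$.

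The heart of the matter is thus the first step: were some $\rho_i$ to carry an atom, $c$ would be flat on a block and the keys $U_j$ could tie, in which case the stable tie-breaking built into $\ps$ (which refers to the original indices $j$) would in general disagree both with the $V$-ordering needed for (i) and with the label-free multiset needed for (ii). The inequality $\rho_i\le\unif[0,1]$ forced by $\rho\in\cC(A)$ is exactly what rules this out, after which both parts are elementary counting on the tie-free event.
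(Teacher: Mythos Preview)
Your proof is correct and takes a genuinely different route from the paper's own argument. The paper proves the stronger permutation identity $(S_n^*)^{-1}\circ S_n=\pi_{\bbW_n}$ by checking the two defining properties of $\pi_{\bbW_n}$, and then derives (i) by evaluating both sides at $n$; for (ii) it reuses this identity together with the rule $\ps(x_{\tau(1)},\dots,x_{\tau(n)})=\tau^{-1}\circ\ps(x_1,\dots,x_n)$ to show that $\pi_{\bbW_n}$, and hence $\bbW_n$, is invariant under relabeling the $V_j$. You instead compute $\pi_{\bbW_n}(\eta_n)$ directly by a blockwise count and match it against $\eta_n^*=\#\{j:V_j\le V_n\}$, and for (ii) you bypass $\pi_{\bbW_n}$ entirely, observing that on the tie-free event $\ios$ depends only on the multiset of its inputs. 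Your route is more elementary and self-contained; the paper's route yields the full permutation identity $(S_n^*)^{-1}\circ S_n=\pi_{\bbW_n}$, which is of independent interest (it is what makes the graph-joining remark following the proof work). A further merit of your write-up is that you make explicit why the $U_j$ are a.s.\ distinct---namely that $\rho_i\le\unif[0,1]$ forces each $\rho_i$ to be atomless, whence $F_i^{-1}$ and $c$ are \emph{strictly} increasing on each block---whereas the paper asserts only that $c$ is ``monotone increasing'' and does not address ties; without this point neither approach would go through as stated.
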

	
	\begin{proof}
		(i):~First we show that $\sigma:=(S_n^*)^{-1}\circ S_n$ is equal to $\pi_{\bbW_n}$. For this, we need to check that $\sigma$ has the two properties defining $\pi_{\bbW_n}$. Let $n_i:=\#\{k\in[n]:W_{n,k}=i\}$ be the number of occurrences of the letter $i$ in the word $\bbW_n$. By definition of $\bbW_n$ exactly $n_i$ of the values $V_1,\dots,V_n$ lie in the interval $[\alpha_1+\dots+\alpha_{i-1},\alpha_1+\dots+\alpha_i)$. Let $n_1+\dots+n_{i-1}+1\leq k\leq n_1+\dots+n_i$. Now $W_{n,\sigma^{-1}(k)}=Y_{S_n(\sigma^{-1}(k))}$ and $S_n(\sigma^{-1}(k))=S^*_n(k)$. Since $V_{S^*_n(k)}$ is the $k$-th largest among the $V$-values, $V_{k:n}$ has to lie in the interval $[\alpha_1+\dots+\alpha_{i-1},\alpha_1+\dots+\alpha_i)$ and so $W_{n,\sigma^{-1}(k)}=i$ by definition. Hence $\sigma$ shares property i) defining $\pi_{\bbW_n}$. The second property follows from the fact that the function $c$ is monotone increasing on intervals of the form $[\alpha_1+\dots+\alpha_{i-1},\alpha_1+\dots+\alpha_i)$. Hence $(S_n^*)^{-1}\circ S_n=\pi_{\bbW_n}$ which implies $\pi_{\bbW_n}(S_n^{-1}(n))=(S_n^*)^{-1}(n)$, so $\eta^*_n=\pi_{\bbW_{n}}(\eta_n)$ for every $n$ follows.\\
		(ii):~It is sufficient to prove that $\pi_{\bbW_n}$ is $\sigma(V_{1:n},\dots,V_{n:n})$-measurable, since $\bbW_n^*$ clearly is $\sigma(V_{1:n},\dots,V_{n:n})$-measurable and one can recover any word $\bw$ from $\bw^*$ and $\pi_{\bw}$. Let $\tau\in\bS_n$ be arbitrary. If $x_1,\dots,x_n$ are distinct real numbers, one has
		$$\ps(x_{\tau(1)},\dots,x_{\tau(n)})=\tau^{-1}\circ\ps(x_1,\dots,x_n).$$ 
		Let $\bbW^{\tau}_n$ be the random word that is constructed starting from $V_{\tau(1)},\dots,V_{\tau(n)}$ (instead from $V_1,\dots,V_n$) like explained in the lemma. Since $\pi_{\bbW_n}=(S^*_n)^{-1}\circ S_n$ one gets 
		$$\pi_{\bbW^{\tau}_n}=(\tau^{-1}\circ S^*_n)^{-1}\circ (\tau^{-1}\circ S_n)=\pi_{\bbW_n}.$$
		So $\pi_{\bbW_n}$ is measurable with respect to the exchangeable $\sigma$-field of $V_1,\dots,V_n$, which is equal to $\sigma(V_{1:n},\dots,V_{n:n})$.
	\end{proof}
	
	\begin{proof}[Proof of Proposition \ref{prop:keyprop}] 
		We start with the same definitions as in Lemma~\ref{lemma:work}. The process $(\bbW,\bbeta)=(\bbW_n,\eta_n)_{n\geq 1}$ is an ergodic GEWP with law $\GL(\rho)$. We want to show that the generated filtration $\bbF$ is of product-type. The process $\bbeta^*$ is the eraser-process corresponding to $\bbV$ and by (i) it is also a process of local innovations for $\bbF$. We want to show that $\bbeta^*$ generates $\bbF$. Since $\bbeta^*$ is a process of local innovations, we only need to check that $\sigma(\bbW_n)\subas\sigma(\bbeta^*_m:m\geq n+1)$ for each $n$. This follows immediately from the inclusion chain
		\begin{equation*}
			\sigma(\bbW_n)\subas \sigma(V_{1:n},\dots,V_{n:n})\subas \sigma(\eta^*_m:m\geq n+1),
		\end{equation*}
		where the first inclusion holds because of Lemma \ref{lemma:work} (ii) and the second because of Proposition \ref{prop:eta}.
	\end{proof}
	In Section~\ref{app:expl} we provide a graphical explanation for the fact that $\bbeta$ is not generating $\bbF$, but $\bbeta^*$ is in the case $A=\{0,1\}$ and $\rho=\unif\{0,1\}\otimes\unif[0,1]$.
	\begin{remark}
		Let $(\bbW,\bbeta)$ be an ergodic GEWP over the finite alphabet $A$ and let $(\bbY,L)$ be the representation as an exchangeable $A^{\bN}\times\bL$-valued pair. Define $\bbeta^*$ by $\eta^*_n=\pi_{\bbW_n}(\eta_n)$ and let $L^*$ be the exchangeable linear order corresponding to $\bbeta^*$. Since we have seen that $\bbeta^*$ is a generating process of local innovations for $\bbF$ and $\sigma(\bbeta^*)\as\sigma(L^*)$, we have that 
		$\sigma(\bbY,L,L^*)\as\sigma(L^*)$. The construction presented in Lemma~\ref{lemma:work} yields that the $A^{\bN}\times\bL\times\bL$-valued triple $(\bbY,L,L^*)$ is ergodic jointly exchangeable. In the sense of example 6.3 in \cite{glasner}, $(\bbY,L,L^*)$ is a special type of \emph{graph joining}, i.e. jointly exchangeable and $(Y,L)=f(L^*)$ for a suitable measurable function $f$. Note that this property is very special to our construction: not every process of local innovations $\bbeta^+$ yields an exchangeable triple $(\bbY,L,L^+)$ and not every exchangeable triple $(\bbY,L,L^+)$ yields a process $\bbeta^+$ that is a process of local innovations. 
	\end{remark}	
	\subsection{General Case}	
	We now lift our results concerning GEWPs over finite alphabets to the general case. We proceed analogously to \cite{laurent} and refer to that paper for more details. The arguments rely on the following facts:
	Let $f:A\rightarrow A'$ be a measurable function between Borel spaces and let $(\bbW,\bbeta)$ be a GEWP over $A$. Define the process $(f(\bbW),\bbeta)=(f(\bbW_n),\eta_n)_{n\geq 1}$ by $f(\bbW_n)=(f(W_{n,1}),\dots,f(W_{n,n}))$. 
	\begin{enumerate}
		\item[1.] $(f(\bbW),\bbeta)$ is a GEWP over $A'$. If $(\bbW,\bbeta)$ is ergodic, so is $(f(\bbW),\bbeta)$. This can be seen by noting that if $(Y_j,U_j)_{j\geq 1}$ is exchangeable/iid, so is $(f(Y_j),U_j)_{j\geq 1}$.
		\item[2.] Let $\bbF$ be the filtration generated by $(\bbW,\eta)$ and $\bbF^f$ be the filtration generated by $(f(\bbW),\bbeta)$. Then $\bbF^f$ is \emph{immersed} in $\bbF$. In our situation this means that the process $(f(\bbW_n),\bbeta_{n+1})_{n\geq 1}$ is markovian with respect to $\bbF$, i.e. 
		\begin{equation*}
		\law(f(\bbW_n),\bbeta_{n+1}|\cF_{n+1})=\law(f(\bbW_n),\bbeta_{n+1}|f(\bbW_{n+1}),\bbeta_{n+2})~~\text{a.s.},
		\end{equation*}
		which is true since both sides of the equation are a.s. equal to 
		\begin{equation*}
		\frac{1}{n+1}\sum_{j=1}^{n+1}\delta_{\big(\erase(f(\bbW_{n+1}),j),j\big)},
		\end{equation*}
		see \cite{laurent}, Section~3.
	\end{enumerate}
	
	\begin{lemma}\label{lemma:interval}
		An ergodic GEWP over the alphabet $A=[0,1]$ generates a product-type backward filtration.
	\end{lemma}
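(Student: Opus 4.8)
The plan is to lift Proposition~\ref{prop:keyprop} from finite alphabets to $A=[0,1]$ by a dyadic approximation argument, in the spirit of the second step in \cite{laurent}. Fix an ergodic GEWP $(\bbW,\bbeta)$ over $[0,1]$ with law $\GL(\rho)$, $\rho\in\cC([0,1])$, and let $\bbF=(\cF_n)_{n\geq 1}$ be the backward filtration it generates; since the process is ergodic, $\bbF$ is Kolmogorovian. For each $N\geq 1$ I would take $f_N:[0,1]\to\{0,1,\dots,2^N-1\}$ to be the map returning the first $N$ binary digits of its argument, so that $f_N$ is a coarsening of $f_{N+1}$ and the maps $(f_N)_N$ jointly separate the points of $[0,1]$. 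By the first fact listed before this lemma, each $(f_N(\bbW),\bbeta)$ is again an ergodic GEWP, now over the finite alphabet $\{0,\dots,2^N-1\}$, so Proposition~\ref{prop:keyprop} applies and the filtration $\bbF^{f_N}=(\cF^{f_N}_n)_{n\geq 1}$ generated by $(f_N(\bbW),\bbeta)$ is of product-type.

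Next I would record the three structural relations between the approximating filtrations and $\bbF$. First, by the second fact listed before the lemma, each $\bbF^{f_N}$ is immersed in $\bbF$. Second, because $f_N=g_N\circ f_{N+1}$ for the digit-truncation map $g_N$ and because the eraser-process $\bbeta$ is common to all of them, one has $\cF^{f_N}_n\subas\cF^{f_{N+1}}_n$ for every $n$, so the $\bbF^{f_N}$ increase with $N$. Third, since the binary digits of a letter determine it, $\bigvee_N\sigma(f_N(\bbW_k))\as\sigma(\bbW_k)$ for each $k$, whence
\begin{equation*}
\bigvee_{N\geq 1}\cF^{f_N}_n\as\sigma(\bbW_k,\eta_{k+1}:k\geq n)=\cF_n\qquad\text{for each }n,
\end{equation*}
i.e. $\bbF$ is the supremum of the increasing sequence $(\bbF^{f_N})_{N\geq 1}$.

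It then remains to pass to the limit, and this is the step demanding the most care. The point is that product-type is stable under increasing limits of immersed filtrations: if $(\bbF^{(N)})_N$ is an increasing sequence of filtrations, each of product-type and each immersed in a Kolmogorovian filtration $\bbF$ with $\bbF\as\bigvee_N\bbF^{(N)}$, then $\bbF$ is itself of product-type. I would invoke this stability result (established in \cite{laurent}, and surveyed in \cite{leuridan}) with $\bbF^{(N)}=\bbF^{f_N}$; combined with the three relations above it yields that $\bbF$ is of product-type, proving the lemma. The genuine obstacle is exactly this last invocation, since the stability of product-type (equivalently, of standardness for Kolmogorovian filtrations) under increasing immersed limits is not formal and is the ingredient borrowed from \cite{laurent}; the dyadic bookkeeping of the first two paragraphs is routine once the finite-alphabet case, Proposition~\ref{prop:keyprop}, is available.
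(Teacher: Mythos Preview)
Your proposal is correct and matches the paper's proof essentially line for line: the paper also discretizes via dyadic maps $f_m(u)=2^{-m}\lceil 2^mu\rceil$, applies Proposition~\ref{prop:keyprop} to each $(f_m(\bbW),\bbeta)$, records that the resulting filtrations $\bbF^m$ are increasing in $m$, immersed in $\bbF$, and have supremum $\bbF$, and then appeals to the stability result stated as Proposition~1 in \cite{laurent}. Your write-up is in fact slightly more explicit than the paper's about why the filtrations increase and why their supremum is $\bbF$, and you correctly flag that the non-trivial ingredient is the borrowed stability theorem.
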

	\begin{proof}
		Let $(\bbW,\bbeta)$ be an ergodic GEWP over $A=[0,1]$ with backward filtration $\bbF$. For each $m$ let $f_m:[0,1]\rightarrow\{0,1,2,\dots,2^m\}$ be defined by $f_m(u):=2^{-m}\lceil 2^mu\rceil$. Let $\bbF^m=(\cF^m_n)_{n\geq 1}$ be the filtration generated by $(f_m(\bbW),\bbeta)$. By Proposition~\ref{prop:keyprop} every $\bbF^m$ is of product-type. It holds that $\cF_n^1\subseteq \cF_n^1\subseteq \cF_n^3\subseteq\cdots$, i.e. 
		for each $n$ the sequence $(\cF^m_n)_{m\geq 1}$ is a (forward) filtration. The Borel $\sigma$-field on $[0,1]$ is clearly generated by the maps $f_m, m\geq 1$, hence $\cF_n=\sigma(\cup_{m\geq 1}\cF^m_n)$ for each $n$. As we have explained above, each $\bbF^m$ is immersed in $\bbF$. Now there is a general theorem from filtration theory that yields that $\bbF$ is of product-type, we refer the reader to \cite{laurent}, Proposition 1 for details.
	\end{proof}
		
	The case of general Borel spaces follows easily:
	
	\begin{proof}[Proof of Theorem~\ref{thm:main3}]
		Let $(\bbW,\bbeta)$ be an ergodic GEWP over a Borel space $A$. By definition of Borel space there exists a Borel subset $E\subset [0,1]$ and a bijection $f:A\rightarrow E$ such that both $f$ and $f^{-1}$ are measurable. If $X$ is a $A$-valued RV then $\sigma(X)=\sigma(f(X))$. Now the process $(f(\bbW),\bbeta)$ is an ergodic GEWP over $[0,1]$ (letters are concentrated on the subset $E$) and the filtration $\bbF^f$ is of product-type by Lemma~\ref{lemma:interval}. Because $\sigma(\bbW_{n,i})=\sigma(f(\bbW_{n,i}))$ for each $n, i\in[n]$, it holds that $\bbF=\bbF^f$, hence $\bbF$ is of product-type.
	\end{proof}
	
	\section{Appendix}
	
	\subsection{Simplices}\label{app:simpl}
	
	We shortly explain how to see that $\cM(A)$ and $\cM'(A)$ are simplices using Theorem~9.1 in \cite{dynkin}. Let $S_1,S_2,\dots$ be a sequence of Borel spaces and for each $k\geq 1$ let $Q_k:S_{k+1}\rightarrow\cM_1(S_{k})$ be a measurable function. A stochastic process $\bbX=(X_k)_{k\geq 1}$ in which  
	$X_k$ takes values in $S_k$ is called $Q$-chain, if 
	\begin{equation*}
		\law(X_k|\sigma(X_m:m\geq k+1))=Q_k(X_{k+1})~~\text{almost surely for each }~k\geq 1.
	\end{equation*}
	Let $\cM$ be the set of all laws of $Q$-chains. Note that by inverting time, i.e. by considering $(\cdots,X_3,X_2,X_1)$, one sees that $\cM$ contains all laws of Markov-Chains over index $-\bN$ with given \emph{transition probabilities}. Theorem~9.1 in \cite{dynkin} can be applied and yields that $\cM$ is a simplex, the extreme points $\erg\cM\subseteq\cM$ are a measurable subset and a $Q$-chain is ergodic (law is extreme point) if and only if the terminal $\sigma$-field generated by the process is a.s. trivial (the terminal $\sigma$-field is $H$-sufficient for $\cM$ in the language of \cite{dynkin}). Note that $\sigma$-fields determining ergodicity by a.s. triviality are a.s. unique (see Theorem 3.2). $\cM(A)$ and $\cM'(A)$ fall into this set-up by considering
	\begin{enumerate}
		\item[1.] $S_k=A^k\times[k+1], Q_k((\bbw,i))=(k+1)^{-1}\sum_{j=1}^{k+1}\delta_{(\erase(\bbw,j),j)}$ and $X_k=(\bbW_k,\eta_{k+1})$.
		\item[2.] $S_k=A^k, Q_k(\bbw):=(k+1)^{-1}\sum_{j=1}^{k+1}\delta_{\erase(\bbw,j)}$ and $X_k=\bbW_k$.
	\end{enumerate}

	\subsection{Proof of Proposition 1}\label{app:prop}
		\begin{proof}
		(i) Let $X$ be a random variable with values in a polish space $S$ and let $(\cF_n)_{n\geq 1}$ be a backward filtration and $\cF_{\infty}:=\cap_n\cF_n$. By reverse martingale convergence for each bounded measurable function $f:S\rightarrow\bR$ it holds that $\bE(f(X)|\cF_n)\rightarrow\bE(f(X)|\cF_{\infty})$ almost surely. We refer the reader to the proof of Theorem 11.4.1, \cite{dudley} in which it is explained how one can now conclude that $\cL(X|\cF_n)$ converges almost surely weakly towards $\cL(X|\cF_{\infty})$. Considering $S=A^k$, $X=\bbW_k$, $\cF_n=\sigma(\bbW_m:m\geq n)$ and the fact that $\law(\bbW_k|\cF_n)=\RSS(\bbW_n,k)$ almost surely yields (i).\\
		(ii) This follows directly from (i).\\
		(iii) We have seen that a $\RSS$-chain $\bbW$ is ergodic iff the terminal $\sigma$-field $\cT:=\cap_n\sigma(\bbW_m:m\geq n)$ is a.s. trivial. Because of this (iii) follows from the more informative fact
		\begin{equation*}
			\cT\as\sigma\Big(\law(\bbW|\cT)\Big)\as\sigma\Big((\law(\bbW_k|\cT))_{k\geq 1}\Big).
		\end{equation*}
		The inclusions '$\supseteq$' are obvious. The first $\subas$ stems from the fact that $\cT$ is obtained as the terminal $\sigma$-field generated by $\bbW$ and the second $\subas$ stems from the fact 
		that $\bbW$ is a Markov~chain: $\law(\bbW|\cT)$ is determined by $\law(\bbW_1,\dots,\bbW_m|\cT), m\geq 1$ and $\law(\bbW_1,\dots,\bbW_m|\cT)$ is measurable with respect to $\cL(\bbW_m|\cT)$.\\
		(iv) For each $k\leq m$ we define a map $f_{m,k}:\cM_1(A^m)\rightarrow\cM_1(A^k)$ by
		\begin{equation*}
		f_{m,k}(\mu):=\int_{A^m}\RSS(\bv,k)d\mu(\bv).
		\end{equation*}
		In particular, it holds that $f_{m,k}(\RSS(\bbw,m))=\RSS(\bbw,k)$ for each $n\geq m\geq k$ and $\bbw\in A^n$. We show that $f_{m,k}$ is continuous, i.e. $\mu_n\rightarrow\mu$ implies $f_{m,k}(\mu_n)\rightarrow f_{m,k}(\mu)$: Let 
		$\bbJ=(J_1,\dots,J_k)$ be uniformly distributed on the set of all $1\leq j_1<\dots<j_k\leq m$. Now $\mu_n\rightarrow\mu$ implies $\mu_n\otimes\law(\bbJ)\rightarrow\mu\otimes\law(\bbJ)$. Since the map $(\bv,(j_1,\dots,j_k))\rightarrow(v_{j_1},\dots,v_{j_k})$ is continuous and $f_{m,k}(\mu)$ is the push-forward of $\mu\otimes\law(\bbJ)$ under this map, the continuity of $f_{m,k}$ follows from continuous mapping theorem. Now let $(\bbw_n)$ be a $\RSS$-convergent sequence with limit $\bmu$. We want to show that $\bmu$ satisfies (\ref{eq:maringallaws}), i.e. $f_{m,k}(\mu_m)=\mu_k$ for all $k\leq m$. This directly follows from continuity of $f_{m,k}$ and $f_{m,k}(\RSS(\bbw_n,m))=\RSS(\bbw_n,k)$ for each $m\geq k$ with $|\bbw_n|\geq m$.\\
		(v) By (i) $\bbW$ is almost surely $\RSS$-convergent towards $(\cL(\bbW_k|\cT))_{k\geq 1}$. Since $\bbW$ is assumed to be ergodic, $\cT$ is almost surely trivial by (iii) and hence $\cL(\bbW_k|\cT)=\cL(\bbW_k)$ for all~$k$.
	\end{proof}

	\subsection{A Graphical Explanation}\label{app:expl}

	We consider an ergodic GEWP $(\bbW,\bbeta)=(\bbW_n,\eta_n)_{n\geq 1}$ over the alphabet $A=\{0,1\}$ that has law $\GL(\rho)$ with $\rho=\unif\{0,1\}\otimes\unif[0,1]$, i.e. $\bbW_n$ is uniform over $\{0,1\}^n$ for each $n$. As we have explained in the end of Section~2.2, one can visualize the convergence of $\rho_{\bbW_n}$ towards $\rho$ as $n\rightarrow\infty$ by drawing the sets $\set(\bbW_n)$ and $\set(\rho)$, see (\ref{eq:setw}) and (\ref{eq:setrho}). We simulated the GEWP and draw the specific sets for some $n$, starting with $n=5$, the last picture is the limiting case, i.e. $\lim_n\set(\bbW_n)=\set(\rho)=\{(t/2,t/2):0\leq t\leq 1\}$:
	\begin{center}\includegraphics[scale=0.5]{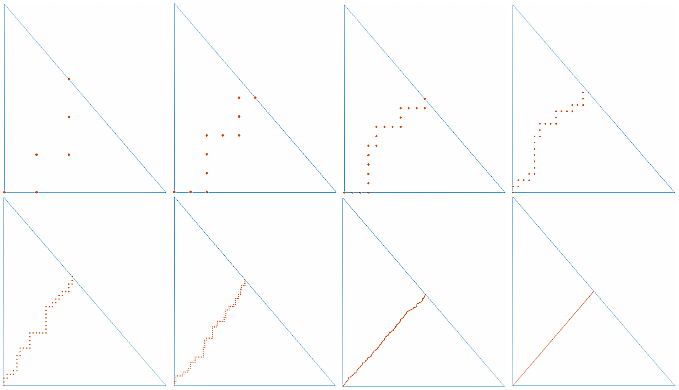}\end{center}
	Now let $\bbS=(S_n)_{n\geq 1}$ be the permutation process corresponding to $\bbeta$. We explain how to recover $\bbW_5$ from $\bbW_n$ and $S_n$ for finite $n\geq 5$: $\bbW_5$ is almost surely equal to the subsequence at positions $S_n^{-1}([5])$ in $\bbW_n$. In the pictures one can interpolate each $\set(\bbW_n)$ and draw line segments $(i/n,0)-(0,i/n)$ for all points $i\in S_n^{-1}([5])$. Each such line segment intersects the interpolated images of $\bbW_n$ and the way these intersection takes place (horizontal/vertically) encodes the information contained in $\bbW_5$. This is no longer true in the limiting case: If $\bbU$ is the $U$-process corresponding to $\bbS$, then 
	the line segments converge to $(U_i,0)-(0,U_i), i\in[5]$ and the images $\rho(\bbW_n)$ converge to the straight line segment $\set(\rho)$. The intersection behavior no longer contains any information, it is always the same angle. \emph{One can not recover $\bbW_5$ from the last picture}:
	\begin{center}\includegraphics[scale=0.5]{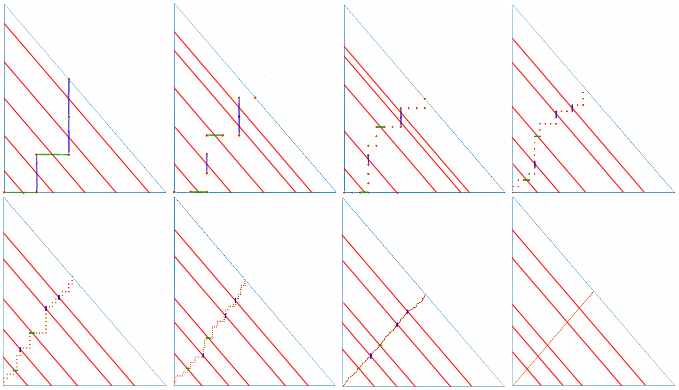}\end{center}
	Now consider $\bbeta^*$ as defined in Proposition~\ref{prop:keyprop} and let $\bbS^*, \bbV$ be the corresponding $S,U$-processes. One can visualize the information contained in $(S^*_n)^{-1}([5])$ by projecting the previously obtained intersection points either vertically or horizontally to the diagonal $(1,0)-(0,1)$, depending of the old intersection angle. Note that for finite $n$, the pictures below contain the same information as the pictures above. \emph{This is no longer the case as $n\rightarrow\infty$}: The last picture contains the information $V_i, i\in[5]$ (final points on $(1,0)-(0,1)$) and from that one can recover not just $U_i, i\in[5]$ \emph{but also $\bbW_5$}:
	\begin{center}\includegraphics[scale=0.5]{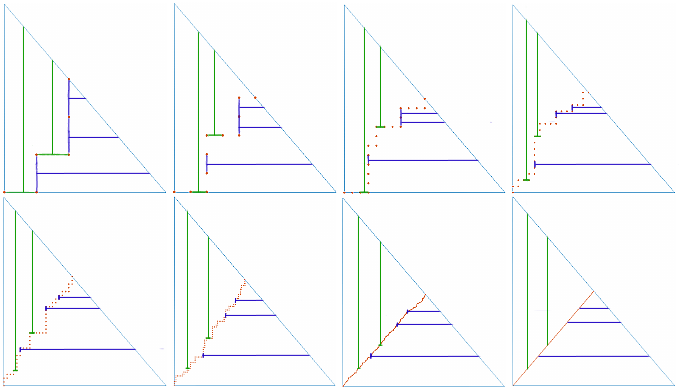}\end{center}
	This constructions works for any ergodic GEWP (the straight line just gets replaced with any other $\set(\rho), \rho\in\cC(\{0,1\})$). The pictures also help to understand Proposition~\ref{prop:filtrationgeneratedbyusual}: if $\rho=\law(f(U),U)$ for some measurable $f:[0,1]\rightarrow\{0,1\}$ and $U\sim\unif[0,1]$, then $\set(\rho)$ consists (basically) of vertical/horizontal parts and it is possible to recover $\bbW_5$ from intersection behavior with $(U_i,0)-(0,U_i)$ alone.

\end{document}